\def\centerarc[#1](#2)(#3:#4:#5)
\newtheorem{theorem}{Theorem}[section]
\newtheorem{corollary}[theorem]{Corollary}
\newtheorem{lemma}[theorem]{Lemma}
\newtheorem{proposition}[theorem]{Proposition}
\newtheorem{remark}[theorem]{Remark}
\numberwithin{equation}{section}
\newcommand{\C}{\mathbf{C}}
\newcommand{\R}{\mathbf{R}}
\newcommand{\dd}{\partial \bar{\partial}}
\newcommand{\p}{\partial}
\newcommand{\tu}{\tilde{u}}
\newcommand{\vol}{\mbox{Vol}}
\begin{document}
	
\title[Schauder estimates]{Schauder estimates on products of  cones}
\author[M. de Borbon]{Martin de Borbon}
\address{LMJL, Universit\'e de Nantes }
\email{martin.deborbon@univ-nantes.fr}
\author[G. Edwards]{Gregory Edwards}
\address{University of Notre Dame}
\email{gedward2@nd.edu}	
		
\maketitle


\maketitle

\begin{abstract}
	We prove an interior Schauder estimate for the Laplacian on metric products of two dimensional cones with a Euclidean factor, generalizing the work of Donaldson and reproving the Schauder estimate of Guo-Song. We characterize the space of homogeneous subquadratic harmonic functions on products of cones, and identify scales at which geodesic balls can be well approximated by balls centered at the apex of an appropriate model cone. We then locally approximate solutions by subquadratic harmonic functions at these scales to measure the H\"older continuity of second derivatives.
\end{abstract}

\section{Introduction}

In \cite{Donaldson}, Donaldson laid the foundation for studying geometric and analytic properties of K\"ahler metrics with conical singularities along a smooth hypersurface. There, Donaldson proved the Schauder estimate for the Laplacian of such metrics, establishing an important step toward the eventual solution of the Yau-Tian-Donaldson conjecture relating the existence of K\"ahler-Einstein metrics on Fano manifolds to K-stability \cite{CDS15a,CDS15b,CDS15c}. The Schauder estimate for metrics with cone singularities along a smooth hypersurface was later reproved by Guo-Song \cite{GuoSongI} and Gui-Yin \cite{GuiYin}.

Conical K\"ahler metrics have since received considerable attention by geometers, particularly for K\"ahler metrics having conical singularities along a simple normal crossing divisor. The Schauder estimate is an important step in developing the linear elliptic theory necessary to construct conical K\"ahler-Einstein metrics, and Guo-Song established the Schauder estimate for linear elliptic and parabolic equations with conical singularities along a simple normal crossing divisor \cite{GuoSongII}.

Near an intersection of irreducible components of the cone divisor, these conical K\"ahler metrics are locally modeled by the Cartesian product of two dimensional cones with cone angle less than $2\pi$, and a Euclidean factor. In this paper we consider such products of cones as themselves Riemannian cones with singular rays, and establish a sharp Schauder estimate for the Laplacian, generalizing the work of Donaldson and reproving the results of Guo-Song. Our Schauder estimate is independent of Guo-Song, and we present it as the techniques may prove useful in further works. 

\subsection{Main result}
Let us write \(C(S^1_{2\pi \beta})\) for the cone over a circle of length \(2\pi\beta >0\). For \(m \geq 3\) and \(n \geq 1\), we consider the product space
\[ C(S^1_{2\pi\beta_1}) \times \ldots \times  C(S^1_{2\pi\beta_n}) \times \R^{m-2n} . \]
In this article we restrict to the case where \(0 < \beta_a < 1\) for all \(a= 1, \ldots, n\). We use polar coordinates \((r_a, \theta_a)\) on each of the cone factors and standard Cartesian coordinates \(s = (s_1, \ldots, s_{m-2n})\) on the Euclidean part, so the cone metric in these coordinates is
\[g = \sum_{a=1}^{n} \left(dr_a^2 + \beta_a^2 r_a^2 d\theta_a^2\right) + ds^2 .\]

We use \(a, b\) to denote indices in \(1, \ldots, n\), corresponding to the cone factors, and indices \(i, j\)  in \(1, \ldots, m-2n\) for the Euclidean directions.
Let \(\mathcal{D}_2\) be the set of second order differential operators given by:
\begin{itemize}
	\item pure Euclidean: 
	\[\frac{\p^2}{\p s_i\p s_j} ;\]
	\item mixed conical-Euclidean: 
	\[\frac{\p^2}{\p{r_a}\p s_i}, \hspace{2mm} \frac{1}{r_a} \frac{\p^2}{\p \theta_a \p s_i} ;\]
	\item mixed conical, \(a \neq b\):
	\[\frac{\p^2}{\p{r_a} \p r_b}, \hspace{2mm} \frac{1}{r_a r_b}\frac{\p^2}{\p \theta_a \p \theta_b}, \hspace{2mm} \frac{1}{r_b} \frac{\p^2}{\p r_a \p\theta_b} ;\]
	\item conical Laplacians:
	\[\Delta_{\beta_a} = \frac{\p^2}{\p r_a^2} + \frac{1}{r_a} \frac{\p}{\p r_a} + \frac{1}{\beta_a^2 r_a^2} \frac{\p^2}{ \p\theta_a^2} . \] 
	\item pure conical, for indices \(a\) with \(\beta_a < 1/2\):
	\[\frac{\p^2}{\p{r_a}^2}, \hspace{2mm} \frac{1}{r^2_a}\frac{\p^2}{\p \theta_a^2}, \hspace{2mm} \frac{1}{r_a} \frac{\p^2}{\p r_a \p\theta_a} ;\]
\end{itemize}
In this collection of differential operators we are excluding the pure conical elements of the Hessian for those angles $\beta\geq 1/2$.

Our main result is a version of the interior Schauder estimates for the Laplace operator of our singular metric,
\[\Delta = \Delta_{\beta_1} + \ldots + \Delta_{\beta_n} + \Delta_{\R^{m-2n}} .\]

We define
\begin{equation}
	\mu = \min \begin{cases} 1 \\
	1/\beta_a -1 \hspace{2mm} \mbox{for } \beta_a \geq 1/2 \\
	1/\beta_a -2 \hspace{2mm} \mbox{for } \beta_a < 1/2. \\
	\end{cases}
\end{equation}

\begin{theorem} \label{mainthm}
	Let \(0< \alpha < \mu\). There is a constant \(C= C(\alpha, \beta_a, m)\) with the following property.
	If \(u \in W^{1, 2}(B_2)\) is a weak solution of \(\Delta u =f\) with \(f \in C^{\alpha}(B_2)\), then for every \(x \in B_1\) and every \(D \in \mathcal{D}_2\) there is \(\tau \in \R\) and constant \(K>0\) such that 
	\begin{equation}\label{mainestimate}
		\left(\rho^{-m} \int_{B(x, \rho)} |Du - \tau|^2 \right)^{1/2} \leq K \rho^{\alpha}   
	\end{equation}
	for every \(0< \rho <1\). Moreover,
	\begin{equation} \label{mainestimate2}
		|\tau| + K \leq C \left(|f(x)| + |f|_{C^{\alpha}(x)} + \|u\|_{L^2(B_2)} \right) .
	\end{equation}
\end{theorem}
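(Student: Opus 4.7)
The estimate (\ref{mainestimate}) is the Campanato-type characterization of $C^{\alpha}$ regularity for the second derivative $Du$, so the plan is to follow the scheme of Donaldson \cite{Donaldson}: at each base point $x$ and scale $\rho$, approximate the solution by a harmonic function on an appropriate model cone, isolate its subquadratic homogeneous piece, and iterate a scale-invariant decay estimate. First I would reduce to the nearly-harmonic case by writing $u = v + w$, where $v$ is a (cone-)Newtonian potential of a constant approximation of $f$ at $x$, so that $\Delta v = f(x)$ and $\Delta(u-v) = f - f(x)$ is of size $[f]_{C^{\alpha}(x)}\rho^{\alpha}$ on $B(x,\rho)$. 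The contribution of $v$ to both sides of (\ref{mainestimate}) is controlled directly by $|f(x)| + [f]_{C^{\alpha}(x)}$, leaving the analysis of the nearly-harmonic remainder $w$ for the rest.

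For each $x$ and $\rho$ I would define a model cone $\mathcal{C}_{x}$ as the product over $a$ of either $C(S^{1}_{2\pi\beta_{a}})$ (if $r_{a}(x) \ll \rho$, rooted at the apex) or $\R^{2}$ (if $r_{a}(x) \gg \rho$), together with $\R^{m-2n}$. The first main technical ingredient is a uniform bi-Lipschitz comparison between $B(x,\rho)$ and the ball of radius $\rho$ about the apex of $\mathcal{C}_{x}$ --- this is the ``identification of scales at which geodesic balls can be well approximated'' promised in the abstract. Combined with a standard approximation-by-compactness argument, it yields, for each $\varepsilon > 0$ and $\rho$ small enough, a harmonic function $h$ on $\mathcal{C}_{x}$ approximating $w$ in $L^{2}(B(x,\rho))$ with error controlled by $\varepsilon \|w\|_{L^{2}(B(x,2\rho))}$.

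The second main ingredient is the classification of degree-$\leq 2$ homogeneous harmonic functions on $\mathcal{C}_{x}$. The collection $\mathcal{D}_{2}$ is calibrated precisely so that each $D \in \mathcal{D}_{2}$ sends any such mode to a constant; the pure conical second derivatives excluded when $\beta_{a}\geq 1/2$ are exactly those operators that fail to be constant on the nonconstant degree-$\leq 2$ conical modes $r_{a}^{k/\beta_{a}}\cos(k\theta_{a}/\beta_{a} + \phi)$ arising in that regime. Writing $h = h_{\leq 2} + h_{>2}$ in homogeneous components, $D h_{\leq 2}$ is a constant $\tau$, and $h_{>2}$ consists of modes of homogeneity at least $2+\mu$, where the three cases in the definition of $\mu$ correspond respectively to cubic harmonic polynomials in $s$, mixed modes $s_{i}\cdot r_{a}^{1/\beta_{a}}\cos$ for $\beta_{a}\geq 1/2$, and purely conical modes $r_{a}^{1/\beta_{a}}\cos$ for $\beta_{a}<1/2$. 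Consequently $Dh - \tau$ has $L^{2}$ mean oscillation of order $\rho^{\mu}$ on $B(x,\rho)$.

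Assembling these ingredients produces a one-step decay: for a suitable $\lambda \in (0,1)$ and every $\alpha < \mu$,
\begin{equation*}
	\inf_{\tau}\int_{B(x,\lambda\rho)} |Du - \tau|^{2} \leq \lambda^{m+2\mu} \inf_{\tau} \int_{B(x,\rho)} |Du - \tau|^{2} + C\rho^{m+2\alpha}\bigl(|f(x)| + [f]_{C^{\alpha}(x)}\bigr)^{2}.
\end{equation*}
A standard Campanato iteration over dyadic scales $\rho_{k} = \lambda^{k}$ then gives (\ref{mainestimate})--(\ref{mainestimate2}), with $\tau$ the limit of the per-scale optimizers and the summability of the errors providing the bound on $K$ and $|\tau|$ in terms of $|f(x)|$, $[f]_{C^{\alpha}(x)}$, and $\|u\|_{L^{2}(B_{2})}$. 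The main obstacle is the approximation step: one must handle the transitional scales $\rho \sim r_{a}(x)$ at which the model cone $\mathcal{C}_{x}$ changes, ensuring that the approximating harmonic functions and per-scale optima vary compatibly so that the iteration converges uniformly with the desired rate. This requires a careful case analysis according to which cone factors are ``seen'' at each scale, together with uniform control of the approximating harmonic functions across transitions --- the two geometric/spectral ingredients highlighted in the abstract.
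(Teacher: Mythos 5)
Your plan follows the same strategy as the paper (approximation by subquadratic harmonic functions on model cones at each scale, followed by a Campanato iteration), but two of the steps you treat as routine are precisely where the work lies, and as written they are gaps.

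First, your one-step decay is stated for $Du$ directly, so you must pass from the $L^2$ closeness of $w$ to a harmonic $h$ (and from the decay of $h_{>2}$) to the corresponding statements for $Dw$ and $Dh_{>2}$. On a singular space this requires an interior $W^{2,2}$ estimate, $\|\mathrm{Hess}(u)\|_{L^2(B_{1/2})}\leq C(\|\Delta u\|_{L^2(B_1)}+\|u\|_{L^2(B_1)})$, which is not automatic: the paper proves it (Proposition \ref{L2bound}) by an eigenfunction expansion and an integrated Bochner identity, using that eigenfunctions are Lipschitz, which in turn uses $\beta_a<1$. The paper sidesteps your difficulty by iterating the decay of $\|u-(P_1+\cdots+P_k)\|_{B(x,\lambda^k)}$ rather than of $Du$, and only invokes the Hessian bound once at the end. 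Second, you flag the transitional scales $\rho\sim r_a(x)$ as ``the main obstacle'' but do not resolve it, and the resolution has two distinct parts in the paper: (i) a uniform bound $N(\epsilon_0,\lambda)$ on the \emph{number} of bad scales, independent of $x$ (Lemma \ref{scaleslemma}) --- not a smallness condition on $\rho$ as you suggest --- so bad scales are simply skipped at the cost of a bounded multiplicative constant $(\lambda^{-m/2-2-\alpha})^{N}$; and (ii) the fact that after a cone factor is replaced by $\R^2$, the fixed operator $D$ (built from $\p_{r_a}$, $r_a^{-1}\p_{\theta_a}$) is no longer among the model cone's own operators $D'$, so $DP_j$ is \emph{not} constant; one must bound the H\"older seminorm of the change-of-frame coefficients by $C\lambda^{j}/r_a(x)$ and sum the resulting geometric series, which converges precisely because cuts are only introduced once $\lambda^{j}\lesssim c_\beta\, r_a(x)$ (equation \eqref{calphapol}). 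Without (ii) the convergence of $\sum_k\tau_k$ and the estimate \eqref{mainest2} do not follow. A smaller point: the balls at good scales are isometric (after cuts) to balls in the model cone with center within $\epsilon_0$ of the apex, not merely uniformly bi-Lipschitz; the sharp exponent $\alpha<\mu$ relies on this, via the continuity argument in Lemma \ref{monlemma}.
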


Our main estimate \eqref{mainestimate} gives a bound for \(Du\) in a Campanato space, which is known to be equivalent to a bound on its H\"older coefficient:
\begin{equation*}
\sup_{|x-y|<1/2} \frac{|Du(y)-Du(x)|}{|x-y|^{\alpha}} .
\end{equation*}
We point out three main differences between Theorem \ref{mainthm}
and the standard Schauder estimates for the Euclidean Laplacian.

\begin{enumerate}
	
	\item We do not estimate all the second order derivatives.  Indeed, it is easy to see that 
	\[u = r^{1/\beta} \cos \theta \]
	is a harmonic function on \(C(S^1_{2\pi\beta})\) and \(\p_r^2 u \sim r^{1/\beta -2}\) is unbounded if \(\beta>1/2\) and \(r \to 0\).
	Note however that \(\mathcal{D}_2\) contains all entries of the Hessian except for the terms \(\p_{r_a}^2\) and \(r_a^{-2}\p_{\theta_a}^2\) and \(r_a^{-1}\p_{\theta_a r_a}^2\) for those \(\beta_a > 1/2\).

	\item We require \(\alpha<1/\beta -1\) for those \(\beta>1/2\). To see that this upper bound on \(\alpha\) is optimal, consider the harmonic function
	\[u = r^{1/\beta} s \cos \theta \]
	on \(C(S^1_{2\pi\beta}) \times \R\). Then \(\p^2 u/\p r\p s\) is \(C^{\alpha}\) for \(\alpha=1/\beta-1\). Similarly, for \(\beta<1/2\), we have that \(\p^2_{r}(r^{1/\beta} \cos \theta)\) is \(C^{\alpha}\) with \(\alpha=1/\beta-2\).
	
	A consequence of our methods is that the upper bound on \(\alpha\) is determined by the next indicial root after \(2\). That is, there are no homogeneous harmonic functions with growth rate \(2+ \alpha\) for any \(0 < \alpha < \mu\). Note that, since we always assume \(0< \alpha<1\), the restriction \(\alpha< 1/\beta_a -2\) only applies if \(1/3<\beta_a<1/2\).
	
	\item  For \(u\) as in Theorem \ref{mainthm}, the first derivatives \(\p_{r_a} u\) and \(r_a^{-1}\p_{\theta_a}u\) vanish along \(\{r_a=0\}\). This is a manifestation of the fact that \(g\) has non-trivial holonomy along arbitrary small loops that go around the conical set. In particular, \(\p_{r_a} P\) and \(r_a^{-1}\p_{\theta_a}P\) vanish identically for any `sub-linear harmonic polynomials' \(P \in \mathcal{H}_{\leq 1}\); see Proposition \ref{subquadraticprop}.

\end{enumerate}

Our metric is locally Euclidean on its regular part, so it follows from the standard regularity theory for the Euclidean Laplacian that \(u\) is locally \(C^{2, \alpha}\) on the locus where none of the \(r_a\) vanishes. The second derivative \(Du\) is then point-wise defined outside the conical singularities, and Theorem \ref{mainthm} guarantees that \(Du\) extends continuously over points \(x\) lying on the singular part \(S=\cup_a \{r_a=0\}\) by setting \(Du(x) = \tau\). Note however that the estimate \eqref{mainestimate} is non-trivial even when \(x\) lies on the regular part. In fact, for regular points arbitrary close to the singular set, the Euclidean Schauder estimates only apply in arbitrary small scales, while our bound \eqref{mainestimate} holds for all \(0 < \rho < 1\) independent of \(x\). 

\subsection{Sketch of proof of Theorem \ref{mainthm}}

We use the approximation by polynomials technique, see {\cite[Chapter 5.4]{HanLin}}. For simplicity, let us assume that \(u\) is harmonic. The idea is then to expand \(u\) on \(B(x, 1)\) as a sum of `homogeneous harmonic polynomials' and then  subtract the subquadratic part \(P\) from \(u\), so that \(u-P\) will vanish to order \(2+\alpha\) at \(x\). If we set \(\tau=DP(x)\), then \(Du-\tau\) will vanish to order \(\alpha\)
as stated in equation \eqref{mainestimate},  provided we have control on the \(C^{\alpha}\)-norm of \(DP\). The problem is that \(x\) is not necessarily the vertex of a cone, so there are no dilations centered at \(x\) and there is not such a notion as `homogeneous harmonic polynomials' on $B(x,1)$. 

To deal with this we use appropriate model cones at each scale. More precisely, we fix some \(0<\lambda<1\) and consider the balls \(B(x, \lambda^k)\) for integers $k\geq 0$. The key point is that, if we fix some error \(\epsilon_0>0\), then all but at most a finite number \(N(\epsilon_0, \lambda)\) of the scales \(\lambda^k\) are `good', meaning that the rescaled ball \(\lambda^{-k}B(x, \lambda^k)\) is isometric to a unit ball in a model cone \(C(Y)\)
with center at distance at most \(\epsilon_0\) from the vertex; see Section \ref{saclesect}.
We approximate \(u\) by subquadratic harmonic functions at each scale \(\lambda^k\) in an iterative manner. We start with \(u_0 =u\) on \(B(x, 1)\). If \(\lambda^k\) is a good scale,  we set \(u_{k+1}=u_k-P_k\) on \(B(x, \lambda^{k+1})\) where \(P_k\) is the subquadratic part; see Proposition \ref{keystep}. If \(\lambda^k\) is a bad scale, we let \(u_{k+1}=u_k\). We set \(\tau_k = DP_k(x)\) and verify that \(\tau= \sum_k \tau_k \) approximates \(Du\) around \(x\) in the sense that the estimate \eqref{mainestimate} holds.

Note that we do \emph{not} obtain a harmonic `polynomial \(P= \sum_k P_k\)' on \(B(x, 1)\) that approximates \(u\) up to order \(2+\alpha\). Indeed, the polynomials \(P_k\) are defined on smaller scales as \(\lambda^k \to 0\). However, their values \(DP_k(x)\) are defined and we can identify `\(\tau=DP(x)\)'.

\subsection{Angles equal to \(\pi\)}\label{sec:pi}

The case where some cone factors have angle $\pi$ is somewhat exceptional due to the additional quadratic harmonic function, $r_a^2 e^{i\theta_a}$.

Assume \(\beta_a=1/2\) for some cone factor and let \(u \in W^{1,2}\) be a weak solution of \(\Delta u = f\) with \(f \in C^{\alpha}\). We use the branched covering map \((r_a, \tilde{\theta}_a) \to (r_a, 2\tilde{\theta}_a)\) to pull-back \(C(S^1_{2\pi\beta_a})\) into \(\R^2\). Let \(\tilde{u}\) and \(\tilde{f}\) be the pull-backs of \(u\) and \(f\). It is easy to see that \(\tilde{\Delta} \tilde{u} = \tilde{f}\), by writing test functions as a sum of even an odd parts with respect to \(\tilde{\theta} \to \tilde{\theta} + \pi\). Then, in addition to the results of Theorem \ref{mainthm}, we have
\[
    \big\| \frac {\partial^2 u} {\partial r_a^2} \big\|_{L^\infty(B_1)} +  \big\| \frac 1 {r_a} \frac {\partial u} {\partial r_a} \big\|_{L^\infty(B_1)} +  \big\| \frac 1 {r_a^2} \frac{\partial^2 u} {\partial \theta_a^2} \big\|_{L^\infty(B_1)} \leq C,
\]
so that the pure conical derivatives in the \(C(S^1_{2\pi\beta_a})\) direction are bounded for every index $a$ with $\beta_a = 1/2$. Note however that this bound is sharp as the second derivative \(\p_r^2\) of the harmonic function \(r_a^{2}e^{i\theta_a}\) does not extend continuously over \(r_a=0\).

\subsection{Comparison with other works}

A first precedent for Theorem \ref{mainthm} is the work of Donaldson \cite{Donaldson}, which considers the case \(C(S^1_{2\pi\beta}) \times \R^{m-2}\). Donaldson proves the Schauder estimates via classical potential theory, differentiating twice Green's representation formula. The main work is on deriving a suitable `polyhomogeneous' expansion of the Green's function around the conical set, which is done via separation of variables. Later, Guo-Song \cite{GuoSongI} gave a new proof of Donaldson's Schauder estimate without using potential theory. The method of Guo-Song gives a sharp estimate on the modulus of continuity of the second derivatives in terms of the Dini condition, along the lines of Wang \cite{Wang} in the smooth case. Their proof relies on approximation by smooth metrics with non-negative Ricci curvature and the Cheng-Yau gradient estimate. More recently, Guo-Song \cite{GuoSongII} adapted their method to the case of normal crossing singularities  \(C(S^1_{2\pi\beta_1}) \times \ldots \times  C(S^1_{2\pi\beta_n}) \times \R^{m-2n}\) as considered in this paper. Yet another approach for the Schauder estimates on \(C(S^1_{2\pi\beta}) \times \R^{m-2}\) was given by Gui-Yin \cite{GuiYin}, considering also \(\beta>1\) and higher order estimates. A main ingredient in Gui-Yin's work is an expansion formula for bounded harmonic functions, \cite[Proposition 5.2]{GuiYin}, and their methods are closer to ours.

The main feature of our work, is that we exploit the explicit knowledge of \(\mathcal{H}_{\leq 2}\), the space of homogeneous harmonic functions of degree \(\leq 2\). We derive Theorem \ref{mainthm} from the bounds on \(|DP|_{\alpha}\) for  \(D \in\mathcal{D}_2\) and \(P \in \mathcal{H}_{\leq 2}\). Indeed \(|DP|_{\alpha}=0\), see Proposition \ref{subquadraticprop} and Corollary \ref{subquadcor}. There are other variants of Theorem \ref{mainthm} that can be obtained by minor changes to our proof. For example, we can relax the requirement \(\alpha<1/\beta_a -2\) for those angles \(\beta_a<1/2\), if we are not willing to estimate \(\p^2/\p r_a^2\), etc. Our work does not rely on approximation by smooth metrics, and we expect to extend it to more general conical type singularities.

\subsection*{Organization}
The rest of the paper is organized as follows. In Section 2 we establish elementary properties of weak solutions and harmonic approximation, see Lemma \ref{harmaproxlem}. In Section 3 we compute the subspace of homogeneous subquadratic harmonic functions for the model cones, see Proposition \ref{subquadraticprop}. In Section 4 we bound the number of bad scales (Lemma \ref{scaleslemma}), and establish the monotonicity property for harmonic functions (Lemma \ref{monlemma}). Finally, in Section 5 we complete the proof of Theorem \ref{mainthm}.

\subsection*{Acknowledgments}
The authors would like to thank G\'abor Sz\'ekelyhidi for suggesting this approach to prove the Schauder estimates.

The first named author was  supported by the ANR grant \textbf{ANR-17-CE40-0034}: \emph{CCEM} and would like to thank Gilles Carron for useful discussions.

The second author was supported by the National Science Foundation RTG: Geometry and Topology at the University of Notre Dame, grant number \textbf{DMS-1547292}.

\section{\(L^2\)-theory} \label{L2}

\subsection{Preliminaries and notation}
We  write
\begin{equation*}
	\R^m_{(\beta)} =  C(S^1_{2\pi\beta_1}) \times \ldots \times  C(S^1_{2\pi\beta_n}) \times \R^{m-2n} 
\end{equation*}
for our product space. We denote by either \(B(x, \rho) \) or \(B_{\rho}(x)\), a geodesic ball of center \(x\) and radius \(\rho\), with respect to the distance \(d=d_g\) induced by \(g\). If the center of the ball is zero, we simply write \(B_{\rho} = B_{\rho}(0)\).

Note that \(g\) is uniformly equivalent to the Euclidean metric, since 
\begin{equation} \label{unifequiv}
	(\min_a\beta_a)^2 g_{\R^m} \leq g \leq g_{\R^m} .
\end{equation}
In particular, \((\min_a\beta_a) d_{\R^m} \leq d \leq d_{\R^m}\) and the geodesic balls of \(g\) are uniformly comparable to Euclidean balls, in the sense that  
\[B_{\R^m}(x, \rho) \subset B(x, \rho) \subset B_{\R^m}(x, (\min_a\beta_{a})^{-1} \rho) .\] 
Note that, if the center of the ball is located at zero, then \(B_{\rho} = \{\sum_a r_a^2 + \sum_i s_i^2 < \rho^2\} \) agrees with the standard Euclidean ball.

Our \(W^{1, 2}\) and \(C^{\alpha}\) function spaces are the standard ones of \(\R^m\). These Sobolev and H\"older spaces can also be interpreted by measuring the corresponding defining norms with respect to the conically singular metric \(g\). 
If follows from equation \eqref{unifequiv} that the respective \(W^{1,2}\) and \(C^{\alpha}\) norms defined by \(g\) and \(g_{\R^m}\) are then equivalent. 
Likewise, our integrals are always referred  with respect to the Riemannian volume measure \(dV_g\). Still, \(dV_g\) is a constant multiple of the ordinary Lebesgue measure; the constant factor being equal to the product of the \(\beta_a\). 

\begin{remark}
	Most of the results in Sections \ref{L2}, \ref{Hom} and \ref{Quant} are well known in the much more general settings of stratified metrics, see \cite{ACMII, ACM}, and Ricci limit spaces, see \cite{Ding}. Our space \(\R^m_{(\beta)}\) fits into both of these theories. Since proofs are elementary, we have decided to include them. 
\end{remark}

\subsection{The Sobolev space \(W^{1, 2}\)}

The gradient of a function \(u\) that is \(C^1\) on the regular set is given by the following expression
\[\nabla u = \sum_a \left( \frac{\p u}{\p{r_a}} \p_{r_a} + \frac{1}{\beta_a r_a}\frac{\p u}{\p \theta_a}  \frac{1}{\beta_a r_a} \p_{\theta_a} \right) + \sum_i \frac{\p u}{\p{s_i}} \p_{s_i} . \]
For a domain \(\Omega \subset \R^m_{(\beta)}\) we set \(H^1(\Omega)\) to be the standard Sobolev space, given by the completion of Lipschitz functions under the norm
\begin{equation*} \label{W12norm}
\| u \|^2_{W^{1, 2} (\Omega)} = 	\| u \|_{L^{2} (\Omega)}^2 + 	\| \nabla u \|_{L^{2} (\Omega)}^2 .
\end{equation*}
For regular domains \(\Omega\) (e.g. a ball) \(H^1(\Omega)\) agrees with \(W^{1, 2}(\Omega)\), that is the subspace of functions in \(L^2(\Omega)\) that have first order weak derivatives also in \(L^2(\Omega)\). We will always work with regular domains and keep the notation \(W^{1, 2}\). 

It is a standard fact that, on a space with co-dimension two singularities,  smooth functions  with compact support on the regular locus, \(C^{\infty}_\mathrm{c}(\Omega^{\mathrm{reg}})\), are dense in \(W^{1, 2}(\Omega)\); see \cite{Mondello}. In our case, this follows by taking products with the following cut-off functions:
\begin{lemma}
	For any \(\epsilon>0\) there is a Lipschitz function \(\chi\) on \(\R^2\) with the following properties:
	\begin{itemize}
		\item \(\chi = 0\) in a neighborhood of \(0\);
		\item \(\chi = 1\) outside \(B_{\epsilon}\);
		\item \(\int_{\R^2} |\nabla \chi|^2 < \epsilon\) .
	\end{itemize}
\end{lemma}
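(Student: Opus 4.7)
The plan is to build $\chi$ explicitly as a logarithmic cut-off, exploiting the fact that points in $\R^2$ have zero capacity. Concretely, for a parameter $0 < \delta < \epsilon$ to be chosen later, I would set
\[
  \chi(x) = \begin{cases} 0 & |x| \leq \delta, \\[1mm] \dfrac{\log(|x|/\delta)}{\log(\epsilon/\delta)} & \delta \leq |x| \leq \epsilon, \\[2mm] 1 & |x| \geq \epsilon. \end{cases}
\]
This is piecewise smooth and globally Lipschitz, vanishes on the disk $B_\delta$ (hence in a neighborhood of $0$), and equals $1$ outside $B_\epsilon$, so the first two bullets hold by construction for any admissible $\delta$.

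Next I would compute the Dirichlet energy. In the annulus $\{\delta < |x| < \epsilon\}$ one has $|\nabla \chi(x)| = \dfrac{1}{|x|\log(\epsilon/\delta)}$, so using polar coordinates,
\[
  \int_{\R^2} |\nabla \chi|^2 \, dx \;=\; \int_\delta^\epsilon \frac{1}{r^2 (\log(\epsilon/\delta))^2}\, 2\pi r \, dr \;=\; \frac{2\pi}{\log(\epsilon/\delta)}.
\]
This is the standard conformal-invariance calculation that shows $\mathrm{cap}_2(\{0\}) = 0$.

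Finally, to make $\int |\nabla \chi|^2 < \epsilon$, it suffices to choose $\delta$ so that $\log(\epsilon/\delta) > 2\pi/\epsilon$, e.g.\ $\delta = \epsilon \, e^{-3\pi/\epsilon}$. With this choice all three conditions are met, completing the argument.

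There is no substantive obstacle: this is the classical logarithmic cut-off, and the only point worth emphasizing is that the same construction fails in $\R^m$ for $m \geq 3$, but works in $\R^2$ precisely because the gradient of $\log|x|$ is borderline non-integrable. This borderline integrability is exactly what permits approximation of $W^{1,2}$ functions by functions supported away from a codimension-two singular set, which is the use to which the lemma will be put.
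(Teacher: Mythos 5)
Your construction is correct and is essentially the same as the paper's: both use the standard logarithmic cut-off on an annulus and choose the inner radius (equivalently the ratio of radii) so that the Dirichlet energy $\sim 2\pi/\log(\epsilon/\delta)$ falls below $\epsilon$. The only difference is cosmetic parametrization (the paper writes the annulus as $\delta \le r \le Q\delta$ with $Q\delta < \epsilon$ rather than fixing the outer radius at $\epsilon$), so nothing further is needed.
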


\begin{proof}
	Let \((r, \theta)\) be polar coordinates in \(\R^2\). For \(\delta, Q >0\) we set
	\begin{equation*}
	f_{\delta, Q} = \begin{cases}
	0 \hspace{2mm} \mbox{if } r \leq \delta \\
	\log (r/\delta) \hspace{2mm} \mbox{if } \delta \leq r  \leq Q\delta \\
	\log Q \hspace{2mm} \mbox{if } r \geq Q\delta .
	\end{cases}
	\end{equation*}
	Let \(1- \chi = (\log Q)^{-1} \left( \log Q -  f_{\delta, Q} \right)\), so
	\begin{align*}
	\int_{\R^2} |\nabla \chi|^2 &= (\log Q)^{-2} \int_{\delta}^{Q\delta} \frac{1}{r^2} r dr \\
	&= (\log Q)^{-1} .
	\end{align*}
	We take \(Q>>1\) such that \((\log Q)^{-1} < \epsilon\) and then we choose \(\delta<<1\) such that \(Q \delta < \epsilon\).
\end{proof}

\subsection{Weak solutions}

Let \(u \in W^{1,2}(\Omega)\) and \(f \in L^1_{\mathrm{loc}}(\Omega)\). We say that \(-\Delta u = f\), if for every Lipschitz test function \(\psi\) with compact support contained in \(\Omega\), we have
\begin{equation}\label{weakeq}
\int_{\Omega} \langle \nabla u, \nabla \psi \rangle =  \int_{\Omega} f \psi .
\end{equation}
The space \(W^{1, 2}_0(\Omega) \subset W^{1, 2}(\Omega)\) is the completion of compactly  supported Lipschitz functions. By continuity, the identity \eqref{weakeq} also holds for test functions \(\psi \in W^{1, 2}_0(\Omega)\). 

It is straightforward to show existence of weak solutions, by using the variational method as follows:
The Poincar\'e inequality asserts that, if \(\Omega\) is bounded, 
\[\int_{\Omega} u^2 \leq C \int_{\Omega} |\nabla u|^2 \]
for \(u \in W^{1, 2}_0(\Omega)\). As a consequence, given \(f \in L^2\), the functional \(\varphi \mapsto \int \varphi f\) is continuous with respect to the Dirichlet norm \(\| \varphi \|_D^2 = \int |\nabla \varphi|^2\). By Riesz, there is \(u \in W^{1, 2}_0\) such that \(\langle \cdot, u \rangle_D = \int \cdot f\), which is to say that \(u \in W^{1, 2}_0\) is a weak solution of \(-\Delta u = f\).

\begin{lemma}[Caccioppoli inequality] \label{Caccioppoli}
	Let \(u \in W^{1, 2}(B_1)\) solve \(-\Delta u = f\) with \(f \in L^2(B_1)\), then
	\begin{equation*}
	\int_{B_{1/2}} |\nabla u|^2 \leq  \int_{B_1} f^2 + C \int_{B_1} u^2 .
	\end{equation*}
\end{lemma}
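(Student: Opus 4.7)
The plan is the classical energy identity obtained by testing the weak equation against $\psi = \eta^2 u$, where $\eta$ is a Lipschitz cutoff adapted to the pair $B_{1/2} \subset B_1$. Concretely, I would choose $\eta \colon B_1 \to [0,1]$ with $\eta \equiv 1$ on $B_{1/2}$, compactly supported in $B_1$, and satisfying $|\nabla \eta| \leq C$; for instance, an affine function of the distance $d(0,\cdot)$. Since $u \in W^{1,2}(B_1)$ and $\eta$ is a bounded Lipschitz function vanishing near $\partial B_1$, the product $\eta^2 u$ lies in $W^{1,2}_0(B_1)$, so by the density/continuity statement following \eqref{weakeq} it is an admissible test function.

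With this choice,
\begin{equation*}
\int_{B_1} \eta^2 |\nabla u|^2 + 2\int_{B_1} \eta u \langle \nabla u, \nabla \eta\rangle = \int_{B_1} f \eta^2 u .
\end{equation*}
Next I would absorb the cross term via the pointwise Cauchy--Schwarz inequality
\begin{equation*}
2 |\eta u \langle \nabla u, \nabla \eta\rangle| \leq \tfrac{1}{2} \eta^2 |\nabla u|^2 + 2 u^2 |\nabla \eta|^2 ,
\end{equation*}
and bound the right-hand side by $\tfrac{1}{2} f^2 + \tfrac{1}{2} \eta^4 u^2$. Collecting terms and using $\eta \leq 1$ and $|\nabla \eta| \leq C$ yields
\begin{equation*}
\tfrac{1}{2}\int_{B_1} \eta^2 |\nabla u|^2 \leq \tfrac{1}{2} \int_{B_1} f^2 + C' \int_{B_1} u^2 .
\end{equation*}
Finally, since $\eta = 1$ on $B_{1/2}$, restricting the left-hand integral to $B_{1/2}$ gives the stated inequality (after absorbing the factor $1/2$ into the constant on the $f^2$ term, or equivalently rescaling the estimate).

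The only nontrivial step is the justification that $\eta^2 u$ is admissible as a test function in \eqref{weakeq}. This is precisely where the co-dimension two nature of the singular set matters: one needs that $W^{1,2}_0(B_1)$ contains all Lipschitz-cutoff products of $W^{1,2}$ functions, which in turn uses the approximation by functions in $C^\infty_c(B_1^{\mathrm{reg}})$ noted just before the lemma and guaranteed by the cutoff construction in the preceding sublemma. Everything else is routine manipulation of the energy identity, so I do not anticipate any conical-geometry complication beyond this density fact.
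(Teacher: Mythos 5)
Your proposal is correct and follows essentially the same route as the paper: testing the weak equation against $\psi = \eta^2 u \in W^{1,2}_0(B_1)$ and absorbing the cross term by Cauchy--Schwarz, with the admissibility of the test function justified exactly as in the paper via the density of compactly supported Lipschitz functions. The final bookkeeping (multiplying the collected inequality by $2$ to get coefficient $1$ on the $\int f^2$ term) also works out as you indicate.
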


\begin{proof}
	Let \(\eta\) be a compactly supported function in \(B_1\), with \(\eta = 1\) on \(B_{1/2}\). We set our test function to be \(\psi = \eta^2 u \in W^{1, 2}_0(B_1)\), so
	\begin{align*}
	\int_{B_1} \eta^2 |\nabla u|^2 &= \int_{B_1} \eta^2 u f - 2 \int_{B_1} \eta u \langle \nabla u, \nabla \eta \rangle \\
	&\leq \frac{1}{2} \int_{B_1} \eta^2 u^2 + \frac{1}{2} \int_{B_1} \eta^2 f^2  + \frac{1}{2} \int_{B_1} \eta^2 |\nabla u|^2  + 2 \int_{B_1} u^2 |\nabla \eta|^2 .
	\end{align*}
	The statement follows with \(C = \max \left(\eta^2 + 4 |\nabla \eta|^2 \right) \)
\end{proof}

The Caccioppoli inequality, combined with Rellich's compactness theorem give us the following useful result.

\begin{lemma}[Harmonic approximation] \label{harmaproxlem}
	For every \(\epsilon>0\) there is \(\delta>0\) with the following property: If \(u \in W^{1, 2}(B_1)\) satisfies \(\Delta u = f\) with \(\|u\|_{L^2(B_1)} \leq 1\) and \(\|f\|_{L^2(B_1)} < \delta\), then there is a weak harmonic function \(h \in W^{1, 2}(B_{1/2})\) such that \(\|u-h\|_{L^2(B_{1/2})} < \epsilon\).
\end{lemma}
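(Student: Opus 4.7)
The plan is the standard compactness/contradiction argument, built on the two ingredients advertised just before the statement: the Caccioppoli inequality (Lemma \ref{Caccioppoli}) to get a uniform gradient bound on a slightly smaller ball, and the Rellich-Kondrachov compact embedding $W^{1,2}(B_{1/2}) \hookrightarrow L^2(B_{1/2})$, which is available because $g$ is uniformly equivalent to the Euclidean metric by \eqref{unifequiv} and $B_{1/2}$ is comparable to a Euclidean ball.

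Suppose the conclusion fails. Then one can fix $\epsilon_0 > 0$ and extract sequences $u_k \in W^{1,2}(B_1)$ and $f_k \in L^2(B_1)$ with $\Delta u_k = f_k$, $\|u_k\|_{L^2(B_1)} \leq 1$, $\|f_k\|_{L^2(B_1)} \to 0$, yet
\[
\|u_k - h\|_{L^2(B_{1/2})} \geq \epsilon_0 \quad \text{for every weakly harmonic } h \in W^{1,2}(B_{1/2}).
\]
Apply Lemma \ref{Caccioppoli} to each $u_k$: since $\|f_k\|_{L^2(B_1)}$ and $\|u_k\|_{L^2(B_1)}$ are uniformly bounded, the sequence $\{u_k\}$ is bounded in $W^{1,2}(B_{1/2})$. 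By Rellich, pass to a subsequence (not relabeled) converging to some $u_\infty$ strongly in $L^2(B_{1/2})$ and weakly in $W^{1,2}(B_{1/2})$.

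The next step is to check that $u_\infty$ is weakly harmonic on $B_{1/2}$. For any Lipschitz test function $\psi$ compactly supported in $B_{1/2}$, the weak equation for $u_k$ gives
\[
\int_{B_{1/2}} \langle \nabla u_k, \nabla \psi \rangle \, dV_g = \int_{B_{1/2}} f_k \psi \, dV_g.
\]
The right-hand side tends to zero by Cauchy--Schwarz and $\|f_k\|_{L^2} \to 0$, while the left-hand side converges to $\int \langle \nabla u_\infty, \nabla \psi \rangle$ by weak $W^{1,2}$-convergence. Hence $u_\infty$ is a weak solution of $\Delta u_\infty = 0$ on $B_{1/2}$. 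Taking $h = u_\infty$ in the contradiction hypothesis yields $\|u_k - u_\infty\|_{L^2(B_{1/2})} \geq \epsilon_0$, contradicting the strong $L^2$-convergence $u_k \to u_\infty$.

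There is no real obstacle: the only thing one needs to be slightly careful about is that the density of $C^\infty_\mathrm{c}(\Omega^{\mathrm{reg}})$ in $W^{1,2}_0$ (discussed just above) is what lets us use Lipschitz test functions in the weak formulation, and that Rellich's theorem applies because $W^{1,2}$ of a regular domain in $\R^m_{(\beta)}$ coincides, via \eqref{unifequiv}, with the Euclidean Sobolev space on the same set. Both points are already in place in Section \ref{L2}, so the contradiction argument goes through cleanly.
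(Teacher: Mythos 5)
Your proposal is correct and follows essentially the same route as the paper: a contradiction argument using the Caccioppoli inequality to get a uniform $W^{1,2}(B_{1/2})$ bound, Rellich compactness to extract a strong $L^2$ limit $u_\infty$, and weak convergence in the test-function identity to show $u_\infty$ is weakly harmonic. No gaps.
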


\begin{proof}
	If not, we would have some \(\epsilon_0>0\) and a sequence of functions \(u_i\) with \(\|u_i\|_{L^2(B_1)} =1\), \(\| \Delta u_i\|_{L^2(B_1)} \to 0 \) and \(\|u_i -h\|_{L^2(B_{1/2})} \geq \epsilon_0\) for every weak harmonic function \(h\) on \(B_{1/2}\). The Caccioppoli inequality implies that \(\|u\|_{W^{1, 2}(B_{1/2})} \leq C\). The Rellich theorem asserts that the inclusion
	\[W^{1, 2} (B_{1/2}) \subset L^2(B_{1/2}) \]
	is compact. 
	Taking a subsequence, we have \(u_{\infty} \in W^{1, 2}(B_{1/2})\) with \(\lim_{i \to \infty} \|u_i - u_{\infty}\|_{L^2(B_{1/2})} = 0\) and \(u_i\) converges weakly in \(W^{1, 2}\) to \(u_{\infty}\). For every test function \(\psi\) we have 
	\begin{equation*}
	\int_{B_{1/2}} \langle \nabla u_{\infty}, \nabla \psi \rangle =  \lim_{i \to \infty} \int_{B_{1/2}} \langle \nabla u_{i}, \nabla \psi \rangle = 0 ,
	\end{equation*}
	so \(u_{\infty}\) is weakly harmonic. Taking \(h=u_{\infty}\) gives \(\|u_i -u_{\infty}\|_{L^2(B_{1/2})} \geq \epsilon_0\),  a contradiction. 
\end{proof}

\subsection{Interior \(L^2\)-bound on \(\mbox{Hess}(u)\)}

The Hessian of a function with respect to our singular metric, \(\mbox{Hess}(u) = \nabla_g^2 u\),  is uniformly equivalent, as a quadratic form, to the Euclidean Hessian. Same as before, we have the usual Sobolev space \(W^{2, 2}\), and it is irrelevant whether we use the Euclidean metric or \(g\) to define its norm. However, one difference is that smooth, compactly supported functions on the regular locus are no longer dense in \(W^{2, 2}\). The next result relies on the fact that eigenfunctions of the Laplace operator are Lipschitz, see \cite{Mondello} and \cite[Proposition 5.4]{BKMR}, which depends on our cone angles \(\beta_a\) being \(< 1\) for all \(a\).

\begin{proposition}\label{L2bound}
	Let \(u \in W^{1, 2}(B_1)\) with \(-\Delta u =f\) and \(f \in L^2(B_1)\). Then \(u \in W^{2, 2}(B_{1/2})\) and
	\begin{equation} \label{hessbound}
		\|\mathrm{Hess}(u)\|_{L^2(B_{1/2})} \leq C \left(\|f\|_{L^2(B_1)} + \|u\|_{L^2(B_1)} \right) 
	\end{equation}
\end{proposition}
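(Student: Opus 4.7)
The plan is to derive a weighted Hessian bound on the regular part by integration by parts, and then remove a singular-set cutoff in the limit.

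By Lemma \ref{Caccioppoli} applied on nested balls, $\|\nabla u\|_{L^2(B_{3/4})} \leq C(\|f\|_{L^2(B_1)} + \|u\|_{L^2(B_1)})$. The metric $g$ is smooth and flat on the regular locus $B_1 \setminus S$, where $S = \bigcup_a \{r_a = 0\}$ is the codimension-two singular set, so classical interior elliptic theory for the Euclidean Laplacian yields $u \in W^{2,2}_{\mathrm{loc}}(B_1 \setminus S)$.

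The core identity: for any Lipschitz $\varphi$ with compact support in $B_1 \setminus S$, two integrations by parts in local Euclidean coordinates on the regular part, followed by Cauchy--Schwarz to absorb a Hessian cross-term, give
\begin{equation*}
\int \varphi^2 |\mathrm{Hess}(u)|^2 \leq 4 \int \varphi^2 f^2 + 6 \int |\nabla \varphi|^2 |\nabla u|^2.
\end{equation*}
Apply this with $\varphi = \eta \chi_\epsilon$, where $\eta$ is a standard smooth cutoff with $\eta \equiv 1$ on $B_{1/2}$ and $\mathrm{supp}\, \eta \subset B_{3/4}$, and $\chi_\epsilon$ is a Lipschitz cutoff vanishing in a neighborhood of $S$ with $\int_{B_{3/4}} |\nabla \chi_\epsilon|^2 < \epsilon$. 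Such a $\chi_\epsilon$ is built as a product over the cone factors $C(S^1_{2\pi \beta_a})$ of the one-variable cutoffs constructed earlier in this section. Using $0 \leq \eta, \chi_\epsilon \leq 1$ so that $|\nabla \varphi|^2 \leq 2|\nabla \eta|^2 + 2|\nabla \chi_\epsilon|^2$, and the Caccioppoli bound on $\nabla u$, one obtains
\begin{equation*}
\int_{B_{1/2}} \chi_\epsilon^2 |\mathrm{Hess}(u)|^2 \leq C\bigl(\|f\|_{L^2(B_1)}^2 + \|u\|_{L^2(B_1)}^2\bigr) + C \int_{B_{3/4}} |\nabla \chi_\epsilon|^2 |\nabla u|^2.
\end{equation*}

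It then suffices to show the last error term vanishes as $\epsilon \to 0$: monotone convergence on the left will produce \eqref{hessbound}. This is the main obstacle, since $|\nabla \chi_\epsilon|$ is only $L^2$-small rather than pointwise small, while $|\nabla u|$ is only $L^2$ a priori, so the two smallness properties do not pair naturally. The resolution is the cited Lipschitz regularity of eigenfunctions (Mondello; BKMR Proposition 5.4), which is available precisely because each $\beta_a < 1$: first prove the proposition under the extra assumption that $u$ is Lipschitz, in which case the error term is bounded by $\|\nabla u\|_{L^\infty}^2 \cdot \epsilon \to 0$. For general $u$, expand $f$ in an eigenfunction basis of the Dirichlet Laplacian on $B_1$; the truncated partial sums $u_N$ are finite linear combinations of Lipschitz eigenfunctions, hence Lipschitz, and satisfy $\Delta u_N = f_N$ with $f_N \to f$ in $L^2$. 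Applying the Lipschitz case to each $u_N$ and passing to the limit by linearity yields the estimate for $u$.
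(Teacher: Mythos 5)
Your overall strategy is sound and, in its essential ingredients, close to the paper's: both arguments rest on (i) the Lipschitz regularity of Dirichlet eigenfunctions on $B_1$ (valid because $\beta_a<1$), (ii) an integration by parts on the regular part with a cutoff $\chi_\epsilon$ of small Dirichlet energy near $S$, and (iii) an eigenfunction expansion to pass from Lipschitz data to general $f\in L^2$. Your ``core identity'' is the correct Caccioppoli-type Hessian inequality for the flat Laplacian (the constants $4$ and $6$ check out after absorbing the cross-term), and your observation that the error term $\int|\nabla\chi_\epsilon|^2|\nabla u|^2$ only closes when $\nabla u\in L^\infty$ is exactly the right diagnosis. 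The paper instead works eigenfunction by eigenfunction via the Bochner identity $\tfrac12\Delta|\nabla u_\lambda|^2=|\mathrm{Hess}(u_\lambda)|^2-\lambda|\nabla u_\lambda|^2$, obtaining the exact identity $\int|\mathrm{Hess}(u_\lambda)|^2=\int f_\lambda^2$ and then summing; your route proves the inequality directly for arbitrary Lipschitz solutions and approximates. Either works.

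There is, however, one genuine gap in your final reduction. Expanding $f$ in the Dirichlet eigenbasis and setting $u_N=\sum_{\lambda\le\Lambda_N}\lambda^{-1}f_\lambda$ produces, in the limit, the solution $v\in W^{1,2}_0(B_1)$ of the Dirichlet problem $-\Delta v=f$ --- not the given $u$. In general $u\notin W^{1,2}_0(B_1)$, so $u-v$ is a nonzero weakly harmonic function in $W^{1,2}(B_1)$, and ``passing to the limit by linearity'' only yields the Hessian bound for $v$; the harmonic discrepancy is never estimated (and it is not a priori Lipschitz, so your base case does not apply to it directly). The standard fix, and the one the paper uses, is to first replace $u$ by $\tilde u=\eta u$ for a cutoff $\eta$ equal to $1$ on $B_{1/2}$: then $\tilde u\in W^{1,2}_0(B_1)$, $-\Delta\tilde u=\tilde f$ with $\|\tilde f\|_{L^2(B_1)}\le C(\|f\|_{L^2(B_1)}+\|u\|_{L^2(B_1)})$ by Caccioppoli, and by uniqueness $\tilde u$ \emph{is} the Dirichlet solution, so your expansion argument applies verbatim and controls $\mathrm{Hess}(u)=\mathrm{Hess}(\tilde u)$ on $B_{1/2}$. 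With that one modification your proof is complete.
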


\begin{proof}

Let \(\eta\) be a smooth cut-off function, with compact support in \(B_1\) and identically equal to \(1\) on \(B_{1/2}\). We can clearly arrange so that \(\Delta \eta\) is also smooth. The function \(\tilde{u} = \eta u \in W^{1, 2}_0(B_1)\)  satisfies \(-\Delta \tilde{u} = \tilde{f}\) with \(\tilde{f} = -(\Delta \eta) u - 2\langle \nabla u, \nabla \eta \rangle + \eta f \). Applying the Caccioppoli inequality, we obtain
\[ \|\tilde{f}\|_{L^2(B_1)} \leq C \left(\|f\|_{L^2(B_1)} + \|u\|_{L^2(B_1)} \right) .\] 
We shall show that \(\tilde{u} \in W^{2, 2}(B_1)\cap W^{1, 2}_0(B_1)\)  and \(\|\tilde{u}\|_{W^{2, 2}(B_1)} \leq C \|\tilde{f}\|_{L^2(B_1)}\).	
	
Because of the compact inclusion \(W^{1, 2}_0 \subset L^2\), we have an orthonormal basis of \(L^2\) by eigenfunctions; see the proof of Lemma \ref{spectlemma}. Therefore, we can write \( \tilde{f} = \sum_{\lambda} f_{\lambda} \) with \(f_{\lambda} \in W^{1, 2}_0(B_1)\) and \(-\Delta f_{\lambda} = \lambda f_{\lambda}\). The eigenvalues form a discrete sequence of non-negative numbers tending to infinity. Note that any weakly harmonic function in \(W^{1, 2}_0\) must vanish identically. In particular, the lowest eigenvalue must be strictly positive. Let \(u_{\lambda} = \lambda^{-1} f_\lambda\), we will show that the series \(\sum_{\lambda} u_{\lambda} \) converges in \(W^{2, 2}(B_1) \cap W^{1, 2}_0(B_1) \).

By standard regularity theory, any eigenfunction \(u_{\lambda}\) is smooth outside the conical set. Moreover, on \(B^{\mathrm{reg}}\) we have the identity
\begin{equation*}
\frac{1}{2} \Delta |\nabla u_{\lambda}|^2 = |\mbox{Hess}(u_{\lambda})|^2 - \lambda |\nabla u_{\lambda}|^2 .
\end{equation*}
Take a cut-off function \(\chi_{\epsilon}\), vanishing in a small neighborhood of the conical set and equal to \(1\) outside the \(\epsilon\)-tubular neighborhood. Multiply by \(\chi_{\epsilon}\) and integrate to get
\begin{equation*}
\int_{B_1} \chi_{\epsilon} |\mbox{Hess}(u_{\lambda})|^2 = \lambda \int_{B_1} \chi_{\epsilon} |\nabla u_{\lambda}|^2 + \frac{1}{2} \int_{B_1} (\Delta \chi_{\epsilon}) |\nabla u|^2 .
\end{equation*}
We let \(\epsilon \to 0\) and take \(\chi_{\epsilon}\) such that \(\lim_{\epsilon \to 0} \int_{B_1}|\Delta \chi_{\epsilon}| = 0\). Since \(|\nabla u_{\lambda} | \in L^{\infty}\), we conclude that \(u_{\lambda} \in W^{2, 2}(B_1) \) and
\begin{align*}
\int_{B_1} |\mbox{Hess}(u_{\lambda})|^2 &= \lambda \int_{B_1} |\nabla u_{\lambda}|^2 \\ &= \lambda^2 \int_{B_1} u_{\lambda}^2  \\ &= \int_{B_1} f_{\lambda}^2 .
\end{align*}

We can proceed the same way with finite linear combinations of eigenfunctions, to show that
\begin{equation*}
	\int_{B_1} |\mbox{Hess}(\sum u_{\lambda})|^2 = \sum \int_{B_1} f_{\lambda}^2
\end{equation*}
and similarly, \(\int_{B_1} (\sum u_{\lambda})^2 = \sum \lambda^{-2} \int_{B_1} f_{\lambda}^2 \) and \(\int_{B_1} |\nabla(\sum  u_{\lambda})|^2 = \sum \lambda^{-1} \int_{B_1} f_{\lambda}^2 \). Therefore, the series \(\sum_{\lambda} u_{\lambda}\) converges to a function \( u' \in W^{2, 2}(B_1)\cap W^{1, 2}_0(B_1)\) that satisfies \(-\Delta u' = f\) and \(\|u'\|_{W^{2, 2}(B_1)} \leq C \|\tilde{f}\|_{L^2(B_1)} \). By uniqueness, \(u' = \tilde{u}\) and we have shown the desired estimate.	
	
\end{proof}

\subsection{Scaled \(L^2\)-norms}

Given a geodesic ball \(B(x, \rho) \subset \R^m_{(\beta)} \), we set
\begin{equation*}
	 \|u\|_{B(x, \rho)} = \left(\rho^{-m} \int_{B(x, \rho)} u^2\right)^{1/2} .
\end{equation*}
This norm has the nice property of being scale invariant. More precisely, our space \(\R^m_{(\beta)}\) is a cone, as we will explain with more detail on Section \ref{prodconessect}, with dilations  given by
\begin{equation*}
	\lambda (r_a, \theta_a, s_i) = (\lambda r_a, \theta_a, \lambda s_i) .
\end{equation*}
Dilations scale the distance by \(d(\lambda x, \lambda y) = \lambda d(x, y)\), and \(\lambda B(x, \rho) = B(\lambda x, \lambda \rho) \). Given a function \(u: B(\lambda x, \lambda \rho) \to \R\), we can pull-it back via the dilation map \(\lambda: B(x, \rho) \to B(\lambda x, \lambda \rho) \), to \(u(\lambda \cdot): B(x, \rho) \to \R\). It is immediate to check that
\begin{equation*}
	\|u(\lambda \cdot)\|_{B(x, \rho)} = \|u\|_{B(\lambda x, \lambda \rho)} .
\end{equation*}

\subsubsection{Scaled estimates}
The Poincar\'e inequality, as well as the Caccioppoli inequality Lemma \ref{Caccioppoli} and the interior  \(L^2\)-bound on the Hessian in Proposition \ref{L2bound}, hold with a uniform constant for all balls \(B_1(p) \subset B_2(p)\) independent of the center. A scaling argument allows us to obtain the corresponding inequalities on balls of arbitrary radius. For example, the scale invariant version of estimate \eqref{hessbound} is
\begin{equation*}
	\|\mbox{Hess}(u)\|_{B(x, \rho)} \leq C \left(\| \Delta u\|_{B(x, 2\rho)} + \rho^{-2}\|u\|_{B(x, 2\rho)} \right)
\end{equation*}

\subsubsection{Campanato spaces}
The following characterization of the H\"older property, in terms of the decay of the \(L^2\)-norm, is attributed to Campanato; see \cite[Chapter 3.2]{HanLin}. We  denote averages by
\begin{equation*}
f_B = (\vol(B))^{-1} \int_B f .
\end{equation*}

\begin{lemma}
	Let \(\alpha>0\). Then there is a constant \(\kappa = \kappa(\alpha, \beta_a, m)\) with the following property: If \(f\) is in \(L^2(B_2)\) and there is  \(K>0\) such that
	\begin{equation*}
	\| f - f_{B_{\rho}(x)} \|_{B_{\rho}(x)} \leq K \rho^{\alpha}
	\end{equation*}
	for  all \(0<\rho<1\) and all \(x \in B_1\). Then \(f\) is continuous on \(B_1\) and 
	\begin{equation*}
	|f(x) - f(y)| \leq \kappa K d(x, y)^{\alpha} 
	\end{equation*}
	for every \(x, y \in B_1\) with \(d(x, y)<1/2\).
\end{lemma}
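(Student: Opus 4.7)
The plan is to follow the classical Campanato embedding argument, the key geometric input being the volume comparison $\vol(B_\rho(x)) \sim \rho^m$ with constants depending only on $m$ and the $\beta_a$, which is an immediate consequence of the uniform equivalence \eqref{unifequiv} between $g$ and $g_{\R^m}$.

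First I would compare averages at consecutive dyadic scales. Fix $x \in B_1$ and set $r = 2^{-j}$. Since $f_{B_{r/2}(x)} - f_{B_r(x)}$ is a constant, one has
\[
|f_{B_{r/2}(x)} - f_{B_r(x)}|^2 = \vol(B_{r/2}(x))^{-1} \int_{B_{r/2}(x)} |f_{B_{r/2}(x)} - f_{B_r(x)}|^2,
\]
and splitting $f_{B_{r/2}(x)} - f_{B_r(x)} = (f_{B_{r/2}(x)} - f) + (f - f_{B_r(x)})$ and using the hypothesis together with $\vol(B_{r/2}(x)) \geq c\,r^m$ gives $|f_{B_{r/2}(x)} - f_{B_r(x)}| \leq C K r^\alpha$. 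Summing the resulting geometric series shows that $\{f_{B_{2^{-k}}(x)}\}_k$ is Cauchy; the uniformly doubling nature of $(\R^m_{(\beta)}, d, dV_g)$ (again via \eqref{unifequiv}) makes the Lebesgue differentiation theorem applicable, so its limit equals $f(x)$ for a.e.\ $x$. Up to modifying $f$ on a null set, I would then define $f(x)$ as this limit pointwise, which yields the decay bound
\[
|f(x) - f_{B_r(x)}| \leq C K r^\alpha \qquad (0 < r \leq 1,\ x \in B_1).
\]

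Next I would compare averages at two nearby points. Given $x, y \in B_1$ with $t = d(x,y) < 1/2$, set $r = 2t$; by the triangle inequality $B_t(x) \subset B_r(x) \cap B_r(y)$, and $\vol(B_t(x)) \geq c\,r^m$. Integrating over $B_t(x)$,
\[
|f_{B_r(x)} - f_{B_r(y)}|^2 \leq \frac{2}{\vol(B_t(x))} \int_{B_t(x)} \bigl(|f - f_{B_r(x)}|^2 + |f - f_{B_r(y)}|^2\bigr),
\]
and each integral is bounded by the hypothesis applied on $B_r(x)$ and $B_r(y)$ respectively, giving $|f_{B_r(x)} - f_{B_r(y)}| \leq C K t^\alpha$. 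Combining with the decay bound from the previous step via
\[
|f(x)-f(y)| \leq |f(x)-f_{B_r(x)}| + |f_{B_r(x)} - f_{B_r(y)}| + |f_{B_r(y)} - f(y)|
\]
produces $|f(x)-f(y)| \leq \kappa K\, d(x,y)^\alpha$ with $\kappa = \kappa(\alpha, \beta_a, m)$, and continuity of (the chosen representative of) $f$ on $B_1$ is then automatic.

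There is no genuine obstacle: everything reduces to volume comparisons on balls centered at arbitrary points, which is handled uniformly by \eqref{unifequiv}. The only mildly delicate point is passing from the a.e.\ identification of the dyadic limit with $f$ to the pointwise statement, which is why one invokes Lebesgue differentiation (valid here because the doubling constant is uniform in $x$).
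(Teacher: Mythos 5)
Your proof is correct and follows essentially the same route as the paper's: compare averages at consecutive dyadic scales, sum the geometric series to identify $f(x)$ with the limit of averages, compare averages at two nearby points at the comparable scale, and conclude by the triangle inequality; the only cosmetic difference is that you use pointwise splitting over a common sub-ball where the paper uses a double integral and Cauchy--Schwarz. Your explicit appeal to Lebesgue differentiation (valid here since $dV_g$ is a constant multiple of Lebesgue measure and geodesic balls are uniformly comparable to Euclidean ones) is a point the paper leaves implicit.
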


\begin{proof}
	
	Let \(x \in B_1\) and \(0<r<1/2\).
	
	\begin{align*}
	&|f_{B_{2r}(x)} - f_{B_r(x)} | \\ &= \frac{1}{\vol(B_{2r}(x))\vol(B_r(x))} \left| \int_{B_{2r}(x) \times B_r(x)} (f(u)-f(v)) dudv \right| \\
	&\leq \frac{1}{\vol(B_{2r}(x))^{\frac 1 2}\vol(B_r(x))^{\frac 12}} \left( \int_{B_{2r}(x) \times B_r(x)} (f(u)-f(v))^2 dudv \right)^{\frac 1 2} \\
	&\leq \frac{\sqrt{2}}{\vol(B_r(x))^{\frac 1 2}} \left( \int_{B_{2r}(x)} (f(u)-f_{B_{2r}(x)})^2 dudv \right)^{\frac 1 2} \\
	&\leq Q(\beta_a, m)\|f -f_{B_{2r}(x)} \|_{B_{2r}(x)} \\
	&\leq Q K (2r)^{\alpha} .
	\end{align*}
	
	Given \(0<\rho<1/2\), we apply the above to \(r = 2^{-k} \rho \) for \(k =0, 1, 2, \ldots, \infty \) to estimate \(\sum_{k=0}^{\infty} |f_{B_{2^{-k+1}\rho}} - f_{B_{2^{-k}\rho}}|\). We conclude that
	\begin{equation*}
	|f_{B_{\rho}(x)} - f(x)| \leq \kappa_1 K \rho^{\alpha} ,
	\end{equation*}
	with \(\kappa_1 = Q \left(\sum_{k=0}^{\infty}2^{-\alpha k}\right)\).
	
	Fix a pair of points \(x, y \in B_1\) with \(d = d(x, y)<1/2\). The same reasoning as before gives us
	\begin{align*}
	& |f_{B_d(x)} - f_{B_d(y)} | \\ &= \frac{1}{\vol(B_d(x))\vol(B_d(y))} \left| \int_{B_d(x) \times B_d(y)} (f(u)-f(v)) dudv \right| \\
	&\leq \frac{1}{\vol(B_d(x))^{1/2}\vol(B_d(y))^{1/2}} \left( \int_{B_{2d}(x) \times B_{2d}(x)} (f(u)-f(v))^2 dudv \right)^{1/2} \\
	&\leq \frac{\sqrt{2}\vol(B_{2d}(x))^{1/2}}{\vol(B_d(x))^{1/2}\vol(B_d(y))^{1/2}} \left( \int_{B_{2d}(x)} (f(u)-f_{B_{2d}(x)})^2 du \right)^{1/2} \\
	&\leq Q'(\beta_a, m) \|f -f_{B_{2d}(x)} \|_{B_{2d}(x)} \\
	&\leq Q' K (2d)^{\alpha} .
	\end{align*}
	We let \(\kappa_2 = Q' 2^{\alpha} \), so that \(|f_{B_d(x)} - f_{B_d(y)} | \leq \kappa_2 K d^{\alpha}\).
	Finally,
	\begin{align*}
	|f(x) - f(y)| &\leq |f(x) - f_{B_d(x)}| + |f_{B_d(x)} - f_{B_d(y)} | + |f_{B_d(y)} - f(y)| \\
	&\leq (2 \kappa_1 + \kappa_2) K d^{\alpha}
	\end{align*}
	which proves the claim.
\end{proof}

\begin{lemma} \label{auxlem}
	Let \(0 < \lambda < 1\) and let \(f \in L^2(B(x, 1))\). If for all integers \(k \geq 0\) there are \(\tau_k \in \R\) such that
	\begin{equation*}
		\|f-(\tau_1 + \ldots + \tau_k)\|_{B(x, \lambda^k)} \leq C \lambda^{k\alpha} ;
	\end{equation*}
	then \(\sum_k |\tau_k| < \infty\). Moreover, if we set \(\tau = \sum_k \tau_k\), then we have
	\begin{equation*}
		\|f - \tau\|_{B(x, r)} \leq C' r^{\alpha}
	\end{equation*}
	for all \(0< r < 1\) and some \(C' = \kappa(\alpha, \lambda)C\).
\end{lemma}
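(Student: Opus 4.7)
The plan is to prove this by the usual telescoping method familiar from Campanato theory, using two structural facts: (i) $\vol(B(x,\rho))/\rho^m$ is bounded above and below by positive constants depending only on $\beta_a$ and $m$ (immediate from the uniform equivalence \eqref{unifequiv} and the fact that $dV_g$ is a constant multiple of Lebesgue measure), so in particular for a constant $c\in\R$ the quantity $\|c\|_{B(x,\rho)}$ differs from $|c|$ only by a bounded factor; and (ii) whenever $\rho' \leq \rho$ the monotonicity
\begin{equation*}
\|u\|_{B(x,\rho')} \leq (\rho'/\rho)^{-m/2}\,\|u\|_{B(x,\rho)}
\end{equation*}
holds, obtained by enlarging the domain of integration in the numerator.

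For the summability of the $\tau_k$, I would write $S_k = \tau_1 + \cdots + \tau_k$ (with $S_0=0$) and apply the triangle inequality on the smaller ball:
\begin{equation*}
\|S_{k+1}-S_k\|_{B(x,\lambda^{k+1})} \leq \|f-S_{k+1}\|_{B(x,\lambda^{k+1})} + \|f-S_k\|_{B(x,\lambda^{k+1})}.
\end{equation*}
The first summand on the right is at most $C\lambda^{(k+1)\alpha}$ by hypothesis; for the second I invoke (ii) with $\rho' = \lambda^{k+1}$, $\rho = \lambda^{k}$, followed by the hypothesis at scale $\lambda^k$, giving the bound $\lambda^{-m/2} C\lambda^{k\alpha}$. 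Converting the left-hand side back to $|\tau_{k+1}|$ via (i) yields $|\tau_{k+1}| \leq A\,C\,\lambda^{k\alpha}$ for some $A = A(\lambda,\alpha,\beta_a,m)$, and summing the geometric series gives $\sum_k|\tau_k| \leq A C/(1-\lambda^\alpha) < \infty$, so $\tau := \sum_k \tau_k$ is well defined.

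For the decay estimate, given $r\in(0,1)$ I would pick the unique integer $k\geq 0$ with $\lambda^{k+1} < r \leq \lambda^k$ and split
\begin{equation*}
\|f-\tau\|_{B(x,r)} \leq \|f-S_k\|_{B(x,r)} + \|\tau-S_k\|_{B(x,r)}.
\end{equation*}
Applying (ii) with $\rho' = r$, $\rho = \lambda^k$ bounds the first summand by $\lambda^{-m/2} C \lambda^{k\alpha} \leq \lambda^{-m/2-\alpha} C r^\alpha$, where the last inequality uses $\lambda^k < r/\lambda$. The second summand, by (i), is comparable to $|\tau - S_k| \leq \sum_{j>k}|\tau_j| \leq A C\lambda^{k\alpha}/(1-\lambda^\alpha)$, which is again $O(r^\alpha)$. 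Combining gives the claimed $\|f-\tau\|_{B(x,r)} \leq C' r^\alpha$ with $C'$ of the form $\kappa(\alpha,\lambda)\,C$, absorbing the $\beta_a,m$ dependence into $\kappa$.

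The argument is purely formal, and I do not anticipate any substantive obstacle. The only place the cone geometry enters is through (i), and it does so uniformly in the base point $x$ precisely because the volume form is a constant multiple of Lebesgue measure; consequently the telescoping constants inherit no dependence on $x$ and the estimate holds at every center with a uniform $C'$.
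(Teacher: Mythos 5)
Your argument is correct; the paper simply omits the proof of Lemma \ref{auxlem} as elementary, and what you give is exactly the standard telescoping argument it has in mind: both facts (i) and (ii) hold as you state them, the bound $|\tau_{k+1}|\leq A C\lambda^{k\alpha}$ follows, and the splitting at the scale $\lambda^{k+1}<r\leq\lambda^k$ closes the estimate. The only cosmetic remark is that your constant $\kappa$ inherits a dependence on $\beta_a$ and $m$ through the volume ratio in (i), which is consistent with the paper's conventions elsewhere (compare the Campanato lemma, whose constant is $\kappa(\alpha,\beta_a,m)$) even though the statement writes only $\kappa(\alpha,\lambda)$.
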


The proof is elementary and we omit it. Under the hypothesis of Lemma \ref{auxlem}, the Campanato condition holds. Because, for any \(\tau \in \R\), \(\|f - f_B\|_B \leq \| f - \tau\|_{B}\).

\section{Homogeneous harmonic functions on products of cones} \label{Hom}

\subsection{Products of cones} \label{prodconessect}

The point we want to emphasize is that our space
\[ \R^m_{(\beta)} = C(S^1_{2\pi\beta_1}) \times \ldots \times  C(S^1_{2\pi\beta_n}) \times \R^{m-2n}  \]
is itself a Riemannian cone. This follows from the general fact that the product \(C(L_1) \times C(L_2)\) of two cones is a cone \(C(L)\). Indeed, we write
\(g_{C_1} = dr_1^2 + r_1^2 g_{L_1} \) and \(g_{C_2} = dr_2^2 + r_2^2 g_{L_2} \), so that \((L_1, g_{L_1})\) and \((L_2, g_{L_2})\) are the respective links; and let \(g_C = g_{C_1} + g_{C_2}\). We introduce coordinates \((\rho, \psi) \in (0, +\infty) \times (0, \pi/2)\) by setting \(r_1 = \rho \sin \psi\) and \(r_2 = \rho \cos \psi\); it is then easy to check that
\begin{equation*}
	g_C = d\rho^2 + \rho^2 \left(d\psi^2 + (\sin^2 \psi) g_{L_1} + (\cos^2 \psi) g_{L_2}\right) .
\end{equation*}
We can rewrite this as
\(g_C = d\rho^2 + \rho^2 g_L\), where the link is \(L = [0, \pi/2] \times L_1 \times L_2 / (\{0\} \times L_1) \sqcup (\{\pi/2\} \times L_2) \) endowed with \(g_L = d\psi^2 + (\sin^2 \psi) g_{L_1} + (\cos^2 \psi) g_{L_2}\). At \(\psi =0\), the \(L_1\) factor in \(L\) collapses, so that \(L \cap \{\psi=0\}\) is a copy of \(L_2\) and the metric \(g_L\) has a singularity modeled on \(C(L_1)\). Similarly, at \(\psi=\pi/2\) the metric \(g_L\) has a singularity modeled on \(C(L_2)\) along \(L_1 = L \cap \{\psi=\pi/2\}\).

In our particular case, \(g\) is a cone over a singular metric \(\bar{g}\) on the \((m-1)\)-sphere. 
We write 
\begin{equation} \label{metricascone}
	g = d\rho^2 + \rho^2 \bar{g} ,
\end{equation}
with 
\[\rho^2 = \sum_a r_a^2 + \sum_i s_i^2 ;\] 
and we identify 
\begin{equation*}
	\R^m_{(\beta)} = C(S^{m-1}_{\bar{g}}) .
\end{equation*}
Our \(\bar{g}\) is a singular metric on the \((m-1)\)-sphere, with cone angles \(2\pi\beta_a\) along the copies of \(S^{m-3} \subset S^{m-1}\) cut-out by \(r_a=0\). On its regular part,  \(\bar{g}\) is locally isometric to the round sphere of curvature \(1\). As model examples, in dimensions \(m=3, 4\), we have:
\begin{itemize}
	\item \(C(S^1_{2\pi\beta}) \times \R\), so 
	\[\bar{g} = d\psi^2 + \beta^2 \sin^2\psi d\theta^2 \] 
	-with \(\psi \in (0, \pi)\)-
	is the `rugby ball' metric on the two-sphere with two antipodal points of angle \(2\pi\beta\).
	\item \(C(S^1_{2\pi\beta_1}) \times C(S^1_{2\pi\beta_2})\), so 
	\[\bar{g} = d\psi^2 + \beta_1^2 \sin^2\psi d\theta_1^2 + \beta_2^2 \cos^2\psi d\theta_2^2 \]
	-with \(\psi \in (0, \pi/2)\)- 
	is a constant curvature \(1\) metric on the three-sphere with cone angles \(2\pi\beta_1\) and \(2\pi\beta_2\) along two Hopf circles of lengths \(2\pi\beta_2\) and \(2\pi\beta_1\) respectively.
\end{itemize}

The dilations of \(g\) are given, in spherical coordinates \((\rho, \Theta) \in (0, + \infty) \times S^{m-1}\), by \(\lambda (\rho, \Theta) =(\lambda \rho, \Theta)\). In terms of the \((r_a, \theta_a, s_i)\) coordinates, we have 
\[\lambda (r_a, \theta_a, s_i) = (\lambda r_a, \theta_a, \lambda s_i) . \]

\subsection{Spectral theory} \label{spectthry}
Let \(\bar{g}\) be the singular metric on the sphere, as in equation \eqref{metricascone}, equivalently \(\bar{g}\) is the restriction of \(g\) to \(S^{m-1} = \{ \sum_a r_a^2 + \sum_i s_i^2 =1 \} \subset \R^m_{(\beta)}\). Clearly, \(\bar{g}\) is uniformly equivalent to the smooth round metric \(g_{\R^m}|_{S^{m-1}}\). Same as before, we have a standard Sobolev space \(W^{1, 2}(S^{m-1})\)
 
\begin{lemma}\label{spectlemma}
	There is an orthonormal basis \(\{\phi_i\}\) of \(L^2(S^{m-1})\) given by eigenfunctions of \(\Delta_{\bar{g}}\). More precisely, \(\phi_i \in W^{1, 2}(S^{m-1})\) solve \(-\Delta_{\bar{g}} \phi_i = \lambda_i \phi_i\) with \(0 = \lambda_0 < \lambda_1 \leq \lambda_2 \leq \ldots \) and \(\lambda_i \to \infty\).
\end{lemma}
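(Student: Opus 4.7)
The plan is to realize $-\Delta_{\bar g}$ as the inverse of a compact, self-adjoint, positive operator on $L^2(S^{m-1})$ and then invoke the classical spectral theorem. Concretely, I would work with the shifted operator $-\Delta_{\bar g}+1$, which is coercive, and treat the unshifted case by a translation of eigenvalues at the end.

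The first step is to establish the compact embedding $W^{1,2}(S^{m-1})\hookrightarrow L^2(S^{m-1})$. This is where the uniform equivalence of $\bar g$ with the smooth round metric is essential: because $\bar g$ is bi-Lipschitz equivalent to $g_{\mathbf R^m}|_{S^{m-1}}$ (an inequality that descends from \eqref{unifequiv}), both the gradient norm and the volume measure defining $W^{1,2}(S^{m-1},\bar g)$ and $L^2(S^{m-1},\bar g)$ are comparable to the smooth versions. Hence the two Sobolev spaces agree as topological vector spaces, and compactness follows from the classical Rellich--Kondrachov theorem on the smooth sphere. The next step is to consider, for $f\in L^2(S^{m-1})$, the Dirichlet-type bilinear form
\begin{equation*}
a(u,v)=\int_{S^{m-1}}\langle \nabla u,\nabla v\rangle_{\bar g}\,dV_{\bar g}+\int_{S^{m-1}}uv\,dV_{\bar g},
\end{equation*}
which is continuous and coercive on $W^{1,2}(S^{m-1})$. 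Lax--Milgram (or equivalently Riesz representation, in the same spirit as the variational existence argument in Section \ref{L2}) produces a unique $u=Tf\in W^{1,2}(S^{m-1})$ satisfying $a(u,v)=\int fv$ for every test $v$, so $(-\Delta_{\bar g}+1)u=f$ weakly.

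The operator $T\colon L^2(S^{m-1})\to L^2(S^{m-1})$ is bounded with values in $W^{1,2}$, hence compact thanks to the first step. Testing $a(Tf,g)$ against $Tg$ shows $T$ is self-adjoint, and coercivity shows $T$ is positive. The spectral theorem for compact self-adjoint operators then yields an orthonormal basis $\{\phi_i\}$ of $L^2(S^{m-1})$ and a non-increasing sequence $\mu_i>0$ with $\mu_i\to 0$, such that $T\phi_i=\mu_i\phi_i$. Unwinding definitions gives $-\Delta_{\bar g}\phi_i=(\mu_i^{-1}-1)\phi_i$, so $\phi_i$ is a weak eigenfunction of $-\Delta_{\bar g}$ with eigenvalue $\lambda_i=\mu_i^{-1}-1\geq 0$ tending to infinity. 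Finally, testing $\int|\nabla\phi|^2=\lambda\int\phi^2$ shows eigenvalues are non-negative, and $\lambda=0$ forces $\phi$ to be weakly harmonic on the connected compact space $S^{m-1}$, hence constant; this identifies the lowest eigenvalue $\lambda_0=0$ and gives the simple zero eigenspace, after which we relabel the remaining ones in non-decreasing order.

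The main obstacle is the compactness of the Sobolev embedding on the singular sphere, but in our setting this is essentially free from the bi-Lipschitz equivalence between $\bar g$ and the round metric; no delicate analysis near the codimension-two singular set $\{r_a=0\}\cap S^{m-1}$ is needed because Lipschitz functions are automatically dense in $W^{1,2}$ in codimension two (as used earlier in the section). Everything else is a standard application of the Lax--Milgram and compact self-adjoint spectral theorems.
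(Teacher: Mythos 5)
Your proposal is correct and follows essentially the same route as the paper: represent the solution operator of $-\Delta_{\bar g}+1$ via Riesz/Lax--Milgram on $W^{1,2}(S^{m-1})$, compose with the compact inclusion into $L^2$, apply the spectral theorem for compact self-adjoint operators, and translate eigenvalues back. The only additions are your explicit justification of the compact embedding via the bi-Lipschitz equivalence of $\bar g$ with the round metric and your verification that the zero eigenvalue is simple (constants on a connected space), both of which the paper leaves implicit.
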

\begin{proof}
	Recall that \(\|u\|_{W^{1, 2}} = \int u^2 + \int | \nabla_{\bar{g}} u |^2\).
	Given \(f \in L^2\), it defines a bounded linear functional on \(W^{1,2}\) by \(T(\phi) = \int f\phi\). If \(u\) is such that \( T = \langle u, - \rangle_{W^{1, 2}}\), then \(u\) is  a weak solution of \(-\triangle_{\bar{g}} u + u = f\). The map \(f \to u\) is a bounded linear map from \(L^2\) to \(W^{1, 2}\), composing this map with the compact inclusion we have a map \(K: L^2 \to L^2\) which is compact and self-adjoint. It follows from the spectral theorem that we can find an orthonormal basis \(\lbrace \phi_i \rbrace_{i\geq 0}\) of \(L^2\) such that \(K(\phi_i) = s_i \phi_i\) and \(s_i \to 0\). Unwinding the definitions, we get that \( \triangle_{\bar{g}} \phi_i = - \lambda_i \phi_i\) with \(0= \lambda_0 \leq \lambda_1 \leq \lambda_2 \leq \ldots\) and \(\lambda_i = (1-s_i)/s_i \to \infty\).
\end{proof}

The set of indicial roots is a discrete subset \(\mathcal{I} \subset \R\) given by the numbers \(d\), such that
\[\lambda = d (d + m-2) \]
is an eigenvalue of \(-\Delta_{\bar{g}}\) as in Lemma \ref{spectlemma}. In particular, since \(\lambda \geq 0\) we always have \(\mathcal{I}\cap (2-m, 0) = \emptyset\). For each eigenvalue \(\lambda\) we have two indicial roots associated to it, \(d_{\pm}\), with \(d_{+} \geq 0\) and \(d_{-} \leq 2-m\). If \(\phi_{\lambda}\) is a corresponding eigenfunction, then \(\rho^{d_{\pm}}\phi_{\lambda}\) are homogeneous harmonic on \(C(S^{m-1}_{\bar{g}})\). We will be interested in harmonic functions bounded in a neighborhood of the vertex, so we will mainly consider non-negative indicial roots.

Let \(B_1 \subset C(S^{m-1}_{\bar{g}})\) be the unit ball centered at the vertex of the cone. In polar coordinates \((r_a, \theta_a, s_i)\), \(B_1 = \{\sum_a r_a^2 + \sum_i s_i^2 < 1\}\). Let us denote by \(S= \cup_a \{r_a=0\}\) the singular set of \(g\).

\begin{lemma}[Maximum principle]
	Let \(u \in C^2(B_1 \setminus S)\cap C^0(\overline{B}_1)\) satisfy \(\Delta u =0\) on \(B_1 \setminus S\) and \(u=0\) on \(\p B_1\), then \(u=0\) on \(B_1\).	
\end{lemma}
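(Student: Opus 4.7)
The argument is the standard barrier/exhaustion technique for extending the classical maximum principle across a codimension-two singular set. The plan is to construct a harmonic barrier on $B_1 \setminus S$ that blows up along $S$, apply the classical (Euclidean) weak maximum principle on a family of domains that exhaust $B_1 \setminus S$, and then let the exhaustion parameter and the size of the barrier tend to zero.

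\textbf{Barrier.} For each cone factor $a$, set $g_a = -\log r_a$. A direct computation gives
\[
\Delta g_a = \Delta_{\beta_a}(-\log r_a) = \frac{1}{r_a^2} - \frac{1}{r_a^2} = 0
\]
on $\{r_a>0\}$, so $g_a$ is harmonic on $B_1\setminus\{r_a=0\}$, continuous and nonnegative on $\overline{B}_1\setminus\{r_a=0\}$ (since $r_a\leq \rho \leq 1$ there), and $g_a\to +\infty$ as $r_a\to 0$. Given $\epsilon>0$, define
\[
\phi_\epsilon = u - \epsilon \sum_{a=1}^n g_a
\]
on $B_1\setminus S$. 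Then $\phi_\epsilon$ is harmonic on $B_1\setminus S$, continuous on $\overline{B}_1\setminus S$, satisfies $\phi_\epsilon \leq 0$ on $\partial B_1 \setminus S$ (because $u=0$ there and $g_a\geq 0$), and $\phi_\epsilon(x)\to -\infty$ as $x\to S$.

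\textbf{Exhaustion and classical max principle.} For $\delta>0$ small, consider the open set $V_\delta = B_1 \setminus \bigcup_a \{r_a \leq \delta\}$, which is a bounded open subset of the regular locus on which $g$ is locally isometric to $g_{\R^m}$. On $V_\delta$, $\phi_\epsilon$ is harmonic in the classical Euclidean sense and continuous up to $\partial V_\delta$. The classical weak maximum principle gives
\[
\sup_{V_\delta}\phi_\epsilon \leq \sup_{\partial V_\delta}\phi_\epsilon.
\]
The boundary $\partial V_\delta$ splits into a piece contained in $\partial B_1$, where $\phi_\epsilon\leq 0$, and a piece contained in $\bigcup_a\{r_a=\delta\}\cap\overline{B}_1$, where $\phi_\epsilon \leq \|u\|_{L^\infty(\overline{B}_1)} + \epsilon\log\delta$. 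Since $u$ is bounded on the compact set $\overline{B}_1$, this second piece is $\leq 0$ once $\delta$ is small enough (depending on $\epsilon$). Hence $\phi_\epsilon\leq 0$ on $V_\delta$ for all sufficiently small $\delta$.

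\textbf{Passing to the limit.} Fix $x\in B_1\setminus S$; then $x\in V_\delta$ for every small enough $\delta$, so $u(x)\leq \epsilon\sum_a g_a(x)$. Letting $\epsilon\to 0^+$ gives $u(x)\leq 0$. Since $B_1\setminus S$ is dense in $\overline{B}_1$ and $u$ is continuous on $\overline{B}_1$, we conclude $u\leq 0$ on all of $\overline{B}_1$. Applying the same argument to $-u$ yields $u\geq 0$, hence $u\equiv 0$.

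\textbf{Main obstacle.} The only subtle point is justifying the use of the Euclidean weak maximum principle on $V_\delta$: one must check that the singular set is effectively excluded, which is exactly why the barrier $\sum_a (-\log r_a)$ — logarithmic because each $\{r_a=0\}$ has codimension two — is needed to absorb the potentially bad behaviour of $u$ near $S$. Once the barrier is in place, the exhaustion argument is routine.
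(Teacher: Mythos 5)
Your proof is correct and follows essentially the same barrier argument as the paper: the function $u - \epsilon\sum_a(-\log r_a)$ is exactly the paper's $u_\epsilon = u + \epsilon\sum_a \log r_a$, and your explicit exhaustion by the domains $V_\delta$ just spells out the step the paper summarizes as ``by the standard maximum principle.'' No changes needed.
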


\begin{proof}
	We reproduce the barrier function argument from \cite{GuoSongI}. For \(\epsilon>0\) we let 
	\[u_{\epsilon} = u + \epsilon \left(\sum_a \log r_a \right) . \]
	Clearly, \(\Delta u_{\epsilon} = 0\) on \(B_1 \setminus S\) and \(u_{\epsilon}(x) \to -\infty\) as \(x \to S\). By the standard maximum principle, we have \( \max_{\overline{B}_1} u_{\epsilon} = \max_{\p B_1} u_{\epsilon}\). On the other hand, \(u_{\epsilon} \leq u =0\) on \(\p B_1\). We conclude that for any \(x \in B_1 \setminus S\), we have \(u(x) = \lim_{\epsilon \to 0} u_{\epsilon}(x) \leq 0\). Similarly, \((-u)(x) \leq 0\), so \(u \equiv 0\).
\end{proof}

\begin{lemma} \label{expansionlemma}
	If \(u \in W^{1, 2}(B_1)\) is harmonic, we have
	\[u = \sum_{d_i \geq 0} \rho^{d_i} \phi_i . \]
\end{lemma}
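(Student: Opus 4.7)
The plan is to perform separation of variables in the spherical decomposition $\R^m_{(\beta)} = C(S^{m-1}_{\bar g})$. Using the orthonormal basis $\{\phi_i\}$ of $L^2(S^{m-1})$ from Lemma~\ref{spectlemma}, I would introduce the Fourier coefficients
\[ a_i(\rho) = \int_{S^{m-1}} u(\rho, \Theta)\,\phi_i(\Theta)\, dV_{\bar g}(\Theta), \qquad 0 < \rho < 1, \]
so that $u(\rho, \cdot) = \sum_i a_i(\rho)\phi_i$ in $L^2(S^{m-1})$ for a.e.\ $\rho$. The strategy is to show that each $a_i$ is a multiple of $\rho^{d_i^+}$ with $d_i^+ \geq 0$, ruling out the singular branch $\rho^{d_i^-}$ with $d_i^- \leq 2-m$ by means of the $W^{1,2}$-integrability.

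In polar coordinates $\Delta = \rho^{1-m}\p_\rho(\rho^{m-1}\p_\rho) + \rho^{-2}\Delta_{\bar g}$. The first step is to derive a weak ODE for $a_i$ by inserting test functions $\varphi(\rho,\Theta) = \psi(\rho)\phi_i(\Theta)$ with $\psi \in C_c^\infty(0,1)$ into the weak formulation $\int\langle\nabla u, \nabla\varphi\rangle = 0$; such $\varphi$ lie in $W^{1,2}_0(B_1)$ because they are supported away from $\rho = 0$ and $\rho = 1$ while $\phi_i \in W^{1,2}(S^{m-1})$. Fubini together with the eigenfunction identity $\int_{S^{m-1}}\langle\nabla_{\bar g}u(\rho,\cdot),\nabla_{\bar g}\phi_i\rangle_{\bar g}\,dV_{\bar g} = \lambda_i a_i(\rho)$ yields the weak Euler equation
\[ (\rho^{m-1} a_i')' = \lambda_i\,\rho^{m-3}\, a_i \quad \text{on } (0,1). \]
Classical ODE theory then gives $a_i \in C^\infty(0,1)$ and $a_i(\rho) = c_i^+ \rho^{d_i^+} + c_i^- \rho^{d_i^-}$, where $d_i^\pm$ are the two distinct roots of $d(d+m-2) = \lambda_i$ (distinctness uses $\lambda_i \geq 0$ together with $m \geq 3$, so the discriminant $(m-2)^2 + 4\lambda_i > 0$). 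To eliminate $c_i^-$, note that $\rho^{d_i^-}\phi_i$ contributes a positive multiple of $\int_0^1 \rho^{2d_i^- + m-3}\,d\rho$ to $\|\nabla u\|_{L^2(B_1)}^2$, and the bound $2d_i^- + m - 3 \leq 1 - m \leq -2$ forces this integral to diverge. Hence $c_i^- = 0$ for every $i$, and summing over $i$ gives $u = \sum_{d_i \geq 0} c_i^+ \rho^{d_i^+}\phi_i$ with convergence in $L^2(B_1)$ guaranteed by Parseval on each sphere together with $u \in L^2(B_1)$.

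The main obstacle I anticipate is the rigorous justification of the commutation used to pass from the weak formulation on $B_1$ to the weak ODE on $(0,1)$, i.e.\ showing $\int_{S^{m-1}}\p_\rho u(\rho,\cdot)\phi_i\,dV_{\bar g} = a_i'(\rho)$ in the distributional sense and likewise for the tangential part. This is essentially Fubini applied to the weak identity, but one must check that the Sobolev integrability of $u$ makes all the manipulations licit and that integration by parts on each sphere against the weak eigenfunction $\phi_i$ produces the factor $\lambda_i a_i(\rho)$. Once the weak ODE is in hand, interior regularity for the Euler equation on $(0,1)$, the explicit dichotomy $\rho^{d_\pm}$ of solutions, and the $W^{1,2}$-integrability criterion combine to finish the proof.
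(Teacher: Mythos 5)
Your proof is correct, but it takes a genuinely different route from the paper's. The paper expands the boundary trace $u|_{S^{m-1}}$ into eigenfunctions, forms the candidate extension $\sum_{d_i\geq 0}\rho^{d_i}\phi_i$, and kills the difference with the maximum principle (the barrier lemma with $\epsilon\sum_a\log r_a$ proved just above); implicitly this needs $u$ continuous up to $\partial B_1$, which for a $W^{1,2}$ harmonic function one gets by running the argument on $B_r$ and letting $r\to 1$. You instead separate variables in the interior, derive the weak Euler equation $(\rho^{m-1}a_i')'=\lambda_i\rho^{m-3}a_i$ for the Fourier coefficients, and discard the singular branch $\rho^{d_i^-}$ by the $W^{1,2}$-integrability of $\nabla u$ near the apex. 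Your route never touches the boundary and avoids the maximum principle altogether, at the cost of the Fubini/commutation bookkeeping you already flag (all of which is licit here: $u(\rho,\cdot)\in W^{1,2}(S^{m-1})$ for a.e.\ $\rho$ by Fubini, the test functions $\psi(\rho)\phi_i(\Theta)$ lie in $W^{1,2}_0(B_1)$ since the $\phi_i$ are Lipschitz, and the one-dimensional equation is regular on compact subintervals of $(0,1)$). Note that both arguments ultimately rest on the same fact, $d_i^-\leq 2-m$, i.e.\ $\mathcal{I}\cap(2-m,0)=\emptyset$: the paper uses it through the sign of the barrier, you use it to make $\int_0^1\rho^{2d_i^-+m-3}\,d\rho$ diverge. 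One small point worth a sentence in a polished write-up: when both branches are present, the cross term in $\int_\epsilon^1(|a_i'|^2+\lambda_i\rho^{-2}a_i^2)\rho^{m-1}d\rho$ does not cancel the divergence, because $\rho^{d_i^-}$ dominates $\rho^{d_i^+}$ as $\rho\to 0$; this is standard but should be said.
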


\begin{proof}
	Expand \(u|_{S^{m-1}}\) into eigenfunctions, \[u|_{S^{m-1}} = \sum_{d_i \geq 0} \phi_i. \] Then \(u - \sum_{d_i \geq 0} \rho^{d_i} \phi_i \) is harmonic on \(B_1\) and vanishes identically on \(S^{m-1}\), by the maximum principle it must be identically zero.
\end{proof}

Let \(\mathcal{H}_d\) be the space of homogeneous harmonic functions of degree \(d\), so this is the zero vector space if \(d\) is not an indicial root. If \(d \in \mathcal{I}\), then \(\mathcal{H}_d\)  is identified with the corresponding space of eigenfunctions.
For \(d \geq 0\), we write \(\mathcal{H}_{\leq d}\) for the finite dimensional vector space spanned by homogeneous harmonic functions with degrees $\leq d$. Clearly, if \(d'>d\) then \(\mathcal{H}_d \subset \mathcal{H}_{d'}\). On the other hand, if  there are no indicial roots in the range $(d,d']$, then \(\mathcal{H}_{\leq d'} = \mathcal{H}_{\leq d}\). So, if we look at \(\mathcal{H}_{\leq d}\) as \(d \geq 0\) increases we see the following. We start with the constants \(\mathcal{H}_{\leq d} = \R\) for \(0 \leq d < d_1\), where \(d_1>0\) is the first positive indicial root. We have \(\mathcal{H}_{\leq d} = \R \oplus \mathcal{H}_{d_1} \) for \(d_1 \leq d < d_2\); and so on. By orthogonality of the eigenfunctions, the spaces \(\mathcal{H}_{d_i}\) and \(\mathcal{H}_{d_j}\), with \(i \neq j\), are orthogonal in \(L^2(B_1)\).

\subsection{Separation of variables} \label{spectrum1}

Consider the product of two cones, as in Section \ref{prodconessect}. Let \(h=g_L\) be the metric on its link, given by
\[h = d\psi^2 + (\sin^2 \psi) g_{L_1} + (\cos^2 \psi) g_{L_2} .\]
Recall that \(\psi \in (0, \pi/2)\) and the radial coordinates of the two factors are \(r_1=\rho \sin \psi\), \(r_2 = \rho \cos \psi\). We proceed to analyze the spectrum of \(\Delta_h\) by separation of variables, we follow \cite[Section 3.6]{ACM}. Write \(n_i = \dim L_i\) and let \(\phi_{\lambda_1}\), \(\phi_{\lambda_2}\) be eigenfunctions on the corresponding links, that is \(-\Delta_{g_{L_i}} \phi_i =  \lambda_i \phi_i\). We have 
\begin{equation*}
	-\Delta_h = \oplus_{\lambda_1, \lambda_2} L_{\lambda_1, \lambda_2} ,
\end{equation*}
where \(L_{\lambda_1, \lambda_2}\) acts on functions \(f(\psi) \phi_{\lambda_1} \phi_{\lambda_2}\) by
\begin{equation*}
	L_{\lambda_1, \lambda_2} = - \frac{\p^2}{\p \psi^2} - \left( n_1 (\tan \psi)^{-1} - n_2 \tan \psi \right) \frac{\p}{\p \psi} + \frac{\lambda_1}{\sin^2\psi} + \frac{\lambda_2}{\cos^2 \psi} .
\end{equation*}
We are led to analyze the eigenfunctions of \(L_{\lambda_1, \lambda_2}\), which can be done via the Sturm-Liouville theory for ODE's.

We consider the space of smooth compactly supported functions in \((0, \pi/2)\), endowed with the measure \(d\mu = \sin^{n_1}\psi \cos^{n_2}\psi d\psi\) and quadratic form
\begin{equation*}
	\langle f, f \rangle_{\lambda_1, \lambda_2} =
	\int_{0}^{\pi/2} \left( \left|\frac{\p f}{\p \psi} \right|^2 + f^2 \frac{\lambda_1}{\sin^2 \psi} + f^2 \frac{\lambda_2}{\cos^2\psi} \right) d\mu .
\end{equation*}
Then \(L_{\lambda_1, \lambda_2}\) is the associated self-adjoint operator with respect to the \(L^2\)-inner product, that is
\begin{equation*}
	\langle f, g \rangle_{\lambda_1, \lambda_2} = \int_{0}^{\pi/2} (L_{\lambda_1, \lambda_2} f) g d\mu .
\end{equation*}

We want \(f(\psi) \phi_{\lambda_1}\phi_{\lambda_2} \in W^{1, 2}((L, h))\). Since smooth functions with compact support on the regular part are dense in \(W^{1, 2}\), this means that we want \(f\) to be in the closure of compactly supported functions on \((0, \pi/2)\) with respect to  \(\langle \cdot, \cdot \rangle_{\lambda_1, \lambda_2} + \|\cdot\|^2_{L^2(d\mu)}\). By standard theory, \(L_{\lambda_1, \lambda_2}\) has discrete spectrum; actually we already proved that \(\Delta_{\bar{g}}\) has discrete spectrum. Each eigenvalue has exactly one (up to scalar factor) eigenfunction \(f\), smooth on \((0, \pi/2)\), and which satisfies  \(f(\psi) \phi_{\lambda_1}\phi_{\lambda_2} \in W^{1, 2}((L, h))\). Indeed, changing variables to \(x=\sin^2\psi\), the ODE has regular singularities at the end-points \(x=0\) and \(x=1\); see equation \eqref{ODE}. The eigenfunctions of \(L_{0,0}\) are hypergeometric functions and our claim on multiplicity one eigenvalues is easily verified. On the other hand, if \(\lambda_1>0\), then the indicial root equation of \(L_{\lambda_1, \lambda_2} f = \lambda f\) at \(x=0\) is 
\begin{equation*}
    r \left( r-1 + \frac{1+n_1}{2} \right) = \frac{\lambda_1}{4} .
\end{equation*}
This has two solutions \(r_1>0\) and \(r_2<0\) which are the leading order terms in the Frobenius series solutions, only the one with \(r_1>0\) extends continuously over \(x=0\). The same discussion applies for \(L_{0, \lambda_2}\) at \(x=1\) and \(\lambda_2>0\).
The first eigenvalue of \(L_{\lambda_1, \lambda_2}\) is clearly \(\geq \lambda_1 + \lambda_2\) and its associated eigenfunction is nowhere vanishing in \((0, \pi/2)\). The second eigenfunction vanishes exactly once on \((0, \pi/2)\), and so on.  Note that, since \(f(\psi)\phi_{\lambda_1}\phi_{\lambda_2}\) extends continuously to \(L\), we have that \(f(0)=0\) if \(\lambda_1 \neq 0\) and likewise \(f(\pi/2)=0\) if \(\lambda_2 \neq 0\); which matches with the Frobenius solution having a positive power leading term.

\subsection{The operators \(L_{\lambda_1, \lambda_2}\)} \label{spectrum2}

Let \(x = \sin^2\psi\), so \(x\) varies in the interval \((0,1)\) and 
\begin{equation} \label{ODE}
	L_{\lambda_1, \lambda_2} = -4x(1-x) \frac{\p^2}{\p x^2} + 2\left((2+n_1+n_2)x -1 -n_1 \right) \frac{\p}{\p x} + \frac{\lambda_1}{x} + \frac{\lambda_2}{1-x}
\end{equation}

We look first at \(L_{0,0}\). Its eigenfunctions correspond to homogeneous harmonic functions which only depend on the radial coordinates of the two factors. That is, \(u = u(r_1, r_2)\) and
\begin{equation*}
	\left(\frac{\p^2}{\p r_1^2} + \frac{n_1}{r_1}\frac{\p}{\p r_1} + \frac{\p^2}{\p r_2^2} + \frac{n_2}{r_2}\frac{\p}{\p r_2} \right) u(r_1, r_2) = 0 .
\end{equation*}
The first eigenvalue of \(L_{0,0}\) is \(0\), with constant eigenfunctions. To find the second eigenvalue we note the following:
\begin{equation*}
	\Delta_{C(L)} r_a^2 = 2 (n_a+1), \hspace{2mm} a= 1, 2 .
\end{equation*}
Therefore, the function \(u = (n_1 + 1)^{-1} r_1^2 - (n_2+1)^{-1} r_2^2\) is a degree $2$ homogeneous harmonic function on \(C(L)\). We write \(u = \rho^2 f(\psi)\) with
\begin{align*}
	f(\psi) &= (n_1+1)^{-1} \sin^2 \psi - (n_2 + 1)^{-1} \cos^2 \psi \\
	&= \left(\frac{1}{n_1+1} + \frac{1}{n_2 +1}\right)x - \frac{1}{n_2+1} . 
\end{align*} 
It is easy to check that \(L_{0, 0} f = -\lambda f\) with \(\lambda = 2(n_1 + n_2 + 2) \). This matches with the formula \(\lambda = d (d-2 + m)\) with \(m=\dim C(L) = n_1 + n_2 + 2\) and \(d=2\). Moreover, \(f(x)\) vanishes exactly once for \(x \in (0, 1)\), which means that \(\lambda =2m\) is the next eigenvalue after \(\lambda=0\). To find the next, we note that
\begin{equation*}
	 \Delta_{C(L)} r_a^4 = 4(3+n_a)r_a^2, \hspace{2mm} \Delta_{C(L)} (r_1^2r_2^2) = 2(n_1+1)r_2^2 + 2(n_2+1)r_1^2 .
\end{equation*}
Therefore,
\begin{align*}
	u &= \frac{1+n_2}{3+n_1}r_1^4 + \frac{1+ n_1}{3+n_2} r_2^4 - 2r_1^2r_2^2 \\
	&= \rho^4 \left(\frac{1+n_2}{3+n_1}\sin^4\psi + \frac{1+ n_1}{3+n_2} \cos^4\psi - 2\sin^2\psi \cos^2\psi\right)
\end{align*}
is homogeneous harmonic of degree \(4\). This implies that
\begin{equation*}
	f = \frac{1+n_2}{3+n_1}x^2 + \frac{1+ n_1}{3+n_2} (1-x)^2 - 2x(1-x)
\end{equation*}
satisfies \(L_{0,0} f = \lambda f\), with \(\lambda = 4(4+n_1+n_2)\). Finally, we note that \(f(x)\) vanishes exactly twice for \(x \in (0, 1)\). To sum up, the first three eigenvalues of \(L_{0, 0}\) give rise to homogeneous harmonic functions of degrees \(0, 2\) and \(4\).

We look now at \(L_{\lambda_1, 0}\) for a non-zero eigenvalue of \(\Delta_{L_1}\). Let \(\phi_1\) be an eigenfunction, \(-\Delta_{L_1} \phi_1 = \lambda_1 \phi_1 \). Let \(\gamma_1 >0\) be given by \(\gamma_1 (\gamma_1 + n_1 -1) = \lambda_1\), so 
\begin{align*}
	u &= r_1^{\gamma_1} \phi_1 \\
	&= \rho^{\gamma_1} (\sin \psi)^{\gamma_1} \phi_{\lambda_1} 
\end{align*}
is a homogeneous harmonic function on \(C(L)\). Therefore, \(f(x) = x^{\gamma_1/2}\) satisfies \(L_{\lambda_1, 0} f = \lambda f\) with \(\lambda = \gamma_1 (\gamma_1 + m -2)\). Since \(f(x)\) in non-vanishing on \((0,1)\), it is the first eigenfunction. To find the next, we note the following:
\begin{equation*}
	\Delta_{C(L)} (r_1^{\gamma_1}r_2^2 \phi_{\lambda_1}) = 2(n_2+1)r_1^{\gamma_1} \phi_{\lambda_1}, \hspace{2mm} \Delta_{C(L)} (r_1^{\gamma_1 +2} \phi_{\lambda_1}) = 2(2\gamma_1+1+n_1)r_1^{\gamma_1} \phi_{\lambda_1} .
\end{equation*}
Therefore,
\begin{align*}
	u &= \left((n_2+1)^{-1}r_1^{\gamma_1}r_2^2 - (2\gamma_1+1+n_1)^{-1}r_1^{\gamma_1+2} \right) \phi_{\lambda_1} \\ 
	&= \rho^{\gamma_1+2} \left((n_2+1)^{-1}\sin^{\gamma_1}\psi \cos^2\psi - (2\gamma_1+1+n_1)^{-1}\sin^{\gamma_1+2}\psi \right) \phi_{\lambda_1}
\end{align*}
is homogeneous harmonic on \(C(L)\). We conclude that
\begin{equation*}
	f(x) = x^{\gamma_1/2} \left(\frac{1}{n_2+1} - \left(\frac{1}{n_2+1} + \frac{1}{2\gamma_1 + 1 + n_1}\right) x \right)
\end{equation*}
satisfies \(L_{\lambda_1, 0} f = \lambda f\) with \(\lambda = (\gamma_1+2)(\gamma_1 +m)\). Since \(f(x)\) vanishes only once in \((0,1)\), it is the second eigenfunction. To sum up, the first two eigenvalues of \(L_{\lambda_1,0}\) correspond to homogeneous harmonic functions on \(C(L)\) of degrees \(\gamma_1\) and \(\gamma_1 +2\). Clearly, the same discussion applies to \(L_{0, \lambda_2}\).

Finally, we consider \(L_{\lambda_1, \lambda_2}\) where \(\lambda_a\) are non-zero eigenvalues. Let \(\phi_a\) be corresponding eigenfunctions, so \(-\Delta_{L_a} = \lambda_a \phi_a\). We get that
\begin{align*}
	u &= r_1^{\gamma_1}r_2^{\gamma_2} \phi_{\lambda_1} \phi_{\lambda_2} \\
	&= \rho^{\gamma_1 + \gamma_2} \left(\sin^{\gamma_1}\psi \cos^{\gamma_2}\psi\right) \phi_{\lambda_1} \phi_{\lambda_2}
\end{align*}
is homogeneous harmonic on \(C(L)\). Hence
\begin{equation*}
	f(x) = x^{\gamma_1/2}(1-x)^{\gamma_2/2}
\end{equation*}
satisfies \(L_{\lambda_1, \lambda_2}f = \lambda f\) with \(\lambda = (\gamma_1+\gamma_2)(\gamma_1+\gamma_2-2+m)\). Since \(f\) is non-vanishing on \((0,1)\), this is the first eigenfunction. We conclude that all homogeneous harmonic functions associated to eigenvalues of \(L_{\lambda_1, \lambda_2}\) have degree at least \(\gamma_1 +\gamma_2\). 

\subsection{Sub-quadratic harmonic functions}

We identify the space of subquadratic harmonic `polynomials' \(\mathcal{H}_{\leq 2}\), as defined in Section \ref{spectthry}, for our space \(\R^m_{(\beta)} = C(S^{m-1}_{\bar{g}})\).

\begin{proposition} \label{subquadraticprop}
	\(\mathcal{H}_{\leq 2}\) is spanned by
	\begin{enumerate}
		\item \(1\), \(s_i\);
		\item \(r_a^{1/\beta_a} \cos (\theta_a) \), \(r_a^{1/\beta_a} \sin (\theta_a) \) for those \(\beta_a \geq 1/2\);
		\item \(\mathcal{H}_2(\R^{m-2n})\), \(r_a^2 - r_b^2\)  and \(r_a^2 - 2s_i^2\).
	\end{enumerate}
Moreover, \(\mathcal{H}_{\leq 2 + \alpha} = \mathcal{H}_{\leq 2}\) provided that \(0<\alpha<\mu\).
\end{proposition}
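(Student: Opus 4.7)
The plan is a Fourier decomposition in the angles $\theta_a$, followed by degree bookkeeping. Since $\Delta$ commutes with each $\partial/\partial\theta_a$ and preserves homogeneity, I would write any $u \in \mathcal{H}_{\leq 2+\alpha}$ as
\begin{equation*}
u = \sum_{\vec k \in \mathbb{Z}^n} U_{\vec k}(r_1,\ldots,r_n,s) \prod_a e^{ik_a\theta_a},
\end{equation*}
with each summand separately harmonic and homogeneous of the same degree. Combining indicial-root analysis of Section \ref{spectthry} at each $\{r_a=0\}$ with membership in $W^{1,2}$, one gets the factorization $U_{\vec k} = \bigl(\prod_a r_a^{|k_a|/\beta_a}\bigr) V_{\vec k}$, where $V_{\vec k}$ is regular at each $\{r_a=0\}$ and homogeneous of degree $\deg u - \sum_a |k_a|/\beta_a \geq 0$.

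The degree budget then selects the admissible modes. The inequality $\sum_a |k_a|/\beta_a \leq \deg u < 2+\mu$, combined with $1/\beta_a > 1$ and the definition of $\mu$ (which yields $2+\mu \leq 1 + 1/\beta_a$ when $\beta_a \geq 1/2$ and $2+\mu \leq 1/\beta_a$ when $\beta_a < 1/2$), forces either $\vec k = 0$, or exactly one index has $k_a = \pm 1$ with $\beta_a \geq 1/2$ and all other $k_b = 0$.

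In the case $\vec k = 0$, the cone Laplacian $\Delta_{\beta_a}$ acts on $SO(2)$-invariant functions simply as $\partial^2_{r_a} + r_a^{-1}\partial_{r_a}$, independently of $\beta_a$, so $u$ is precisely an $SO(2)^n$-invariant homogeneous harmonic polynomial of degree $\leq 2$ on ordinary $\R^m$. Using the identities $\Delta s_i^2 = 2$ and $\Delta r_a^2 = 4$, solving the single linear harmonicity constraint in the space of invariant quadratics produces the basis in items (1) and (3). In the case $k_a = \pm 1$ with $\beta_a \geq 1/2$, $V_{\vec k}$ has homogeneity strictly less than $2+\mu - 1/\beta_a \leq 1$, and regularity across each $\{r_b=0\}$ forces $V_{\vec k}$ to be constant, yielding exactly the functions $r_a^{1/\beta_a}\cos\theta_a$ and $r_a^{1/\beta_a}\sin\theta_a$ of item (2).

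Finally, the equality $\mathcal{H}_{\leq 2+\alpha} = \mathcal{H}_{\leq 2}$ amounts to showing no indicial root lies in the open interval $(2, 2+\mu)$. The same trichotomy handles this: the $\vec k = 0$ sector contributes only integer degrees with next value $\geq 3 \geq 2+\mu$; a single-mode contribution with nonconstant $V_{\vec k}$ has degree at least $1/\beta_a + 1 \geq 2+\mu$; and any multi-mode contribution (two or more nonzero $k_a$) has degree at least $1/\beta_a + 1/\beta_b > 2+\mu$ since $\beta_b < 1$. I expect the main obstacle to be precisely this last verification, where each of the three clauses defining $\mu$ must be matched with one source of indicial roots and the cases $\beta_a \geq 1/2$ versus $\beta_a < 1/2$ must be separated with care; this is where the definition of $\mu$ is exactly saturated.
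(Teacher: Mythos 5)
Your route is viable and genuinely different from the paper's. The paper never Fourier-decomposes in the angles globally; instead it writes the link as an iterated doubly warped product, reduces to the Sturm--Liouville operators $L_{\lambda_1,\lambda_2}$ in the variable $x=\sin^2\psi$, identifies the first few eigenfunctions explicitly (via $\Delta r_a^2$, $\Delta r_a^4$, etc.) and orders them by counting zeros, then inducts on the number of cone factors. Your torus-equivariant decomposition plus degree bookkeeping replaces the zero-counting with arithmetic on the exponents $\sum_a|k_a|/\beta_a$, and your trichotomy ($\vec k=0$; one $k_a=\pm1$ with $\beta_a\ge 1/2$; everything else pushed above degree $2+\mu$) is exactly the right case split -- it is in fact closer in spirit to the paper's alternative argument for $m=3$ in Section \ref{specialcase}, where tangential differentiation and the absence of indicial roots in $(2-m,0)$ play the role of your reduction. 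Your approach buys a cleaner global bookkeeping; the paper's buys an explicit handle on the \emph{next} indicial roots ($3$, $1/\beta_a$, $1+1/\beta_a$, $4$, \dots), which it needs anyway for the sharpness discussion.

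The one place where you assert what the paper actually proves is the factorization $U_{\vec k}=\bigl(\prod_a r_a^{|k_a|/\beta_a}\bigr)V_{\vec k}$ with $V_{\vec k}$ regular, together with its companion claim that a nonconstant regular homogeneous $V_{\vec k}$ has degree $\ge 1$. Both rest on the Frobenius analysis at the regular singular points $r_a=0$: one must check that the $W^{1,2}$ condition kills the negative indicial root $-|k_a|/\beta_a$, that no logarithmic terms appear, and that the subsequent exponents step by even integers in $r_a$ (so the $r_a$-dependence of $V_{\vec k}$ contributes degree in $2\mathbf{Z}_{\ge 0}$ and only the Euclidean variables can contribute degree $1$). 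This is precisely the content of the indicial equation following \eqref{ODE} and the hypergeometric-type series solutions in Sections \ref{spectrum1}--\ref{spectrum2}; without it, "regularity forces $V_{\vec k}$ constant" is not yet an argument. If you supply that lemma (a one-variable Bessel-type computation at each $\{r_a=0\}$), the rest of your proof closes up correctly, including the final verification that no indicial root lies in $(2,2+\mu)$.
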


\begin{proof}
	
We assume \(m\geq 4\), for the special case \(m=3\) see Section \ref{specialcase} below. We proceed by induction on the number of cone factors, so consider first \(C(S^1_{2\pi\beta}) \times \R^{m-2}\). We apply the analysis in Sections \ref{spectrum1} and \ref{spectrum2} to \(L_1 = S^1_{2\pi\beta}\) and \(L_2=S^{m-3}(1)\). The operator \(L_{0, 0}\) gives rise to constants, \((1/2)r^2-(m-3)^{-1}|s|^2\) and then homogeneous harmonic functions of degree \(4\) are higher.

We proceed to analyze \(L_{\lambda_1, 0}\). The first non-zero eigenvalue of \(S^1_{2\pi\beta}\) is \(\lambda_1 = 1/\beta^2\) with eigenfunctions \(\cos \theta\) and \(\sin \theta\). The corresponding homogeneous harmonic functions \(r^{1/\beta}\cos\theta\), \(r^{1/\beta}\sin\theta\) have degree \(1/\beta\). If \(1/2 \leq \beta<1\), then these functions are subquadratic, otherwise, for $0 < \beta < 1/2$, they have degree $>2$. The other eigenvalues of \(L_{\lambda_1, 0}\) give rise to homogeneous harmonic functions of degree \(2+(1/\beta)\) and higher. The other eigenvalues on \(S^1_{2\pi\beta}\) are \(k^2/\beta^2\) with \(k\geq 2\) and give rise to homogeneous harmonic functions of degrees \(2/\beta> 2\) and higher. We consider \(L_{0, \lambda_2}\). The first non-zero eigenvalue of \(S^{m-3}(1)\) is \(\lambda_2 = m-3\). The corresponding homogeneous harmonic functions \(s_i\),  have degree \(1\). The other eigenvalues of \(L_{0,\lambda_2}\) give rise to homogeneous harmonic functions of degree \(3\) and higher. The next eigenvalue on \(S^{m-3}(1)\) is \(2(m-2)\), the corresponding homogeneous harmonic functions are \(\mathcal{H}_2(\R^{m-2})\). The other homogeneous harmonic functions coming from \(L_{0, 2(m-2)}\) have degree \(4\) and higher. Finally, the lowest eigenvalue of \(L_{\lambda_1, \lambda_2}\) gives rise to the homogeneous harmonic functions \(s_i r^{1/\beta} cos \theta\) and \(s_i r^{1/\beta} cos \theta\), which have degree \(1+1/\beta\). As a result, we see that the next indicial root after \(2\) is the minimum of \(3\), \(1/\beta\) (if \(\beta<1/2\)), \(1+1/\beta\) (if \(\beta>1/2\)). In particular, by our choice of \(\alpha\), there are no indicial roots in \((2, 2+\alpha]\) and the statement \(\mathcal{H}_{\leq 2} = \mathcal{H}_{\leq 2+ \alpha}\) follows.

More generally, we repeat the above argument with \(\R^{m-2}\) replaced by \(C(L_2)\) with \(\mathcal{H}_{\leq 2}(C(L_2))\) given as in the statement of the proposition. The first non-zero eigenvalue of \(L_2\) gives rise, through \(L_{0, \lambda_2}\), to homogeneous harmonic functions of degree \(1\), \(3\), and higher if there is at least one Euclidean factor on \(C(L_2)\); or degree \(1/\beta_a\), \(1/\beta_a + 2\), and higher if \(C(L_2)\) does not split off any Euclidean factor. The second non-zero eigenvalue of \(L_2\) gives rise to the degree \(2\) functions \(\mathcal{H}_2(\R^{m-2n})\), degree \(4\) and higher. If \(m=2n\), then the second non-zero eigenvalue of \(L_2\) gives rise to homogeneous harmonic functions of degree \(2/\beta_a>2\) and higher. Similarly, the homogeneous harmonic functions coming from eigenvalues of \(L_{\lambda_1, \lambda_2}\) have degree at least \(1/\beta_1 + 1\) if \(m>2n\), and \(1/\beta_1 + 1/\beta_a\) for some \(a\geq 2\) if \(m=2n\). As before, we see that the next indicial root after \(2\) is the minimum of \(3\), \(1/\beta_a\) for those \(\beta_a<1/2\), \(1+1/\beta_a\) for those \(\beta_a>1/2\). In particular, by our choice of \(\alpha\), there are no indicial roots in \((2, 2+\alpha]\) and the statement of Proposition \ref{subquadraticprop} follows.

\end{proof}

\begin{corollary} \label{subquadcor}
	For every \(D \in \mathcal{D}_2\) and every \(P \in \mathcal{H}_{\leq 2}\) we have \(DP \equiv \tau \in \R\). Moreover,
	\begin{equation} \label{polyest}
		|\tau| \leq C \|P\|_{L^2(B_1)}
	\end{equation}
	for some \(C=C(m,\beta_a)\).
\end{corollary}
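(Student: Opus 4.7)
The strategy is to verify the claim directly on the explicit basis of $\mathcal{H}_{\leq 2}$ given by Proposition \ref{subquadraticprop}, then extend by linearity. This reduces the statement to a case-by-case computation of $DP$ for each basis element. The constants and linear functions $1,s_i$ are killed by every $D\in\mathcal{D}_2$ (these are second-order operators). A Euclidean harmonic polynomial $P\in\mathcal{H}_2(\R^{m-2n})$ depends only on $s$, so every operator in $\mathcal{D}_2$ except $\partial^2/\partial s_i\partial s_j$ annihilates it, and the latter returns a constant. For $P=r_a^2-r_b^2$ or $P=r_a^2-2s_i^2$, a direct computation gives $\Delta_{\beta_c}(r_a^2)=4\,\delta_{ca}$, and when $\beta_c<1/2$ the pure conical component $\partial^2/\partial r_c^2(r_a^2)=2\,\delta_{ca}$, while $r_c^{-2}\partial^2_{\theta_c\theta_c}(r_a^2)$ and $r_c^{-1}\partial^2_{r_c\theta_c}(r_a^2)$ vanish; the pure Euclidean derivatives of $-2s_i^2$ give constants; and every other operator in $\mathcal{D}_2$ vanishes on these polynomials. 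In every case the output is a constant.

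The delicate case, which dictates the very definition of $\mathcal{D}_2$, is $P=r_a^{1/\beta_a}\cos\theta_a$ (or $\sin\theta_a$), which enters $\mathcal{H}_{\leq 2}$ only when $\beta_a\geq 1/2$. Any $D\in\mathcal{D}_2$ that differentiates in a Euclidean coordinate or a cone direction $b\neq a$ gives zero since $P$ is independent of those variables; the conical Laplacian $\Delta_{\beta_a}$ gives zero because $P$ is harmonic on the single cone $C(S^1_{2\pi\beta_a})$. The only operators that would produce a non-constant answer are the pure conical components $\partial^2/\partial r_a^2$, $r_a^{-1}\partial^2/\partial r_a\partial\theta_a$ and $r_a^{-2}\partial^2/\partial\theta_a^2$, all of which produce terms proportional to $r_a^{1/\beta_a-2}$; but these are precisely the operators excluded from $\mathcal{D}_2$ when $\beta_a\geq 1/2$. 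This is the main conceptual point of the corollary — the exclusion built into $\mathcal{D}_2$ is engineered exactly so that $DP$ remains constant on the whole space $\mathcal{H}_{\leq 2}$.

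Finally, the estimate \eqref{polyest} is an abstract consequence of finite-dimensionality: for each fixed $D\in\mathcal{D}_2$, the assignment $P\mapsto DP$ is a linear functional $\mathcal{H}_{\leq 2}\to\R$, and since $\mathcal{H}_{\leq 2}$ is a finite-dimensional vector space (of dimension depending only on $m$ and the $\beta_a$) on which $\|\cdot\|_{L^2(B_1)}$ defines a genuine norm — functions in $\mathcal{H}_{\leq 2}$ are continuous, so vanishing in $L^2(B_1)$ forces vanishing identically — the functional is automatically bounded, yielding $|\tau|\leq C\|P\|_{L^2(B_1)}$ with $C=C(m,\beta_a)$. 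There is no real obstacle beyond the bookkeeping of the case analysis; the content of the corollary is the compatibility between the list of operators in $\mathcal{D}_2$ and the list of harmonic functions in $\mathcal{H}_{\leq 2}$.
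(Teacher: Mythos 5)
Your proof is correct and follows the same route as the paper, which simply asserts that the constancy of $DP$ is an ``immediate check'' on the explicit basis from Proposition \ref{subquadraticprop} and that \eqref{polyest} follows from equivalence of norms on a finite-dimensional space. Your case-by-case verification and the observation that the excluded pure conical operators are exactly the ones that would break constancy on $r_a^{1/\beta_a}e^{i\theta_a}$ is just an explicit writing-out of that check.
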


\begin{proof}
	Given the explicit description of \(\mathcal{H}_{\leq 2}\) in Proposition \ref{subquadraticprop}, the fact that \(DP\) is constant follows by immediate check inspection. The estimate \eqref{polyest} follows from the general principle that any two norms on a finite dimensional vector space are equivalent.
\end{proof}

\subsubsection{Special case: \(m=3\)} \label{specialcase}

We present an alternative proof to Proposition \ref{subquadraticprop} for the case of \(C(S^1_{2\pi\beta}) \times \R\).  In cylindrical coordinates \((r, \theta, s)\),  the metric is \(g = dr^2 + \beta^2 r^2 d\theta^2 + ds^2\). In spherical coordinates, we introduce the angular variable \(t \in (0, \pi)\) and write \(r = \rho \sin t\), \(s = \rho \cos t\).  We have
\begin{align*}
	g &= d\rho^2 + \rho^2 (dt^2 + \beta^2 \sin^2 t d\theta^2) \\
	&= d\rho^2 + \rho^2 \bar{g} .
\end{align*}
Here, \(\bar{g} = dt^2 + \beta^2 \sin^2t d\theta^2\) is the spherical `rugby ball' metric on \(S^2\). Let \(d \geq 0\) and let \(u = \rho^d \varphi\) be a homogeneous harmonic function, so \(\varphi = \varphi(t, \theta) \in W^{1, 2}(S^2)\) solves \(-\Delta_{\bar{g}} \varphi = \lambda \varphi\). 

The function \(u\) is smooth away from \(r=0\) and \(\Delta (\p u/ \p s)=0\). On the other hand,
\[ \frac{\p}{\p s} = \cos t \frac{\p}{\p \rho} - \frac{\sin t}{\rho} \frac{\p}{\p t} .\]
Therefore, \(\p u / \p s = \rho^{d-1} \psi\) where \(\psi\) is a smooth function on \(S^2\) away from the two conical points and satisfies \(-\Delta_{\bar{g}} \psi = \tilde{\lambda} \psi\) point-wise on the regular part. It is given by
\begin{equation*}
	\psi = - (\sin t) \frac{\p \varphi}{\p t} + d (\cos t) \varphi . 
\end{equation*}

We claim that \(\psi\) extends as a H\"older continuous function over \(t=0\). Indeed, by De Giorgi-Nash-Moser we know that \(\varphi\) is \(C^{\alpha}\) at \(t=0\). Applying the standard Schauder estimates in small balls away from the conical point we get that \(|\p \varphi / \p t| = O (t^{\alpha-1})\). Therefore, \(\sin t (\p \varphi / \p t)\) extends by \(0\) over \(t=0\) as a \(C^{\alpha}\) function. Since \(\psi\) is \(C^{\alpha}\) and satisfies \(-\Delta_{\bar{g}} \psi = \lambda \psi\) away from the conical point, the standard Schauder theory implies that \(|\nabla_{\bar{g}} \psi| = O(t^{\alpha-1})\), so \(\psi \in W^{1, 2}\) and it solves the eigenvalue equation in the weak sense.

We can repeat the argument for higher order tangential derivatives. The conclusion is that, if \(\p^k u / \p s^k \neq 0\) then \(d-k\) is an indicial root. If \(d>0\) is not an integer we let \(k\) be the smallest integer such that \(d-k < 0\), so \(-1 < d-k <0\). Since there are no indicial roots in the interval \((-1, 0) = (2-m, 0)\), we conclude that \(\p^k u /\p s^k \equiv 0\) and \(u = u_0(r, \theta) p(s) \) for some polynomial \(p(s)\) of degree \(< d\). We see that, if \(d<2\) then \(p(s)\) must be affine and \(\Delta_{\beta} u_0 =0\). The possibilities are \(u =\) constant, or, if \(\beta>1/2\), a linear combination of \(r^{1/\beta}\cos\theta\), \(r^{1/\beta}\sin\theta\).

If \(d=2\), then \(\p^2 u / \p s^2\) is homogeneous harmonic of degree zero, hence a constant. After subtracting a multiple of \(r^2 - 2s^2\), we can assume   \(\p^2 u / \p s^2 =0\), so \(p(s)\) must be affine and \(\Delta_{\beta}u_0 =0\). The cone \(C(S^1_{2\pi\beta})\) does not have non-zero homogeneous harmonic functions of either degree \(1\) (because \(\beta<1\)) or \(2\) (because \(\beta \neq 1/2\)).
The statement of Proposition \ref{subquadraticprop} for the case \(C(S^1_{2\pi\beta}) \times \R\) then follows.

\section{Elementary quantitative considerations} \label{Quant}

\subsection{Geodesics balls} \label{saclesect}
Let \(\mathcal{C}\) be the finite collection of at most \(2^n\) cones \(C(Y)\), given by \(\R^m\), \(C(S^1_{2\pi\beta_i}) \times \R^{m-2}\) and more generally
\[C(S^1_{2\pi\beta_{i_1}}) \times \ldots \times C(S^1_{2\pi\beta_{i_{\ell}}}) \times \R^{m-2\ell} \]
with \( \ell \leq n\) and \(1 \leq i_1 < \ldots < i_{\ell} \leq n\). Here, \(Y\) denotes the \((m-1)\)-sphere endowed with a possibly singular metric as the ones described in Section \ref{prodconessect}. Equivalently, \(\mathcal{C}\) consists of all \(\R^m_{(\beta')}\) where \((\beta')\) is a subset of \((\beta)= (\beta_1, \ldots, \beta_n) \).

Given \(\epsilon>0\), we say that \(B(x, \rho) \subset \R^m_{(\beta)}\) is \(\epsilon\)-close to a ball \(B_{C(Y)}(0, \rho) \subset C(Y)\) for a cone \(C(Y) \in \mathcal{C}\), if after introducing cuts in some cone factors (see Appendix \ref{2cone}) \(B(x, \rho)\) is isometric to a ball \(B_{C(Y)}(z, \rho) \subset C(Y)\) with \(|z| = d_{C(Y)}(z, o) < \epsilon \rho\). Equivalently, \(\rho^{-1}B(x, \rho)\) is isometric to \(B_{C(Y)}(z, 1)\) with \(|z| < \epsilon\).

For $x \in \R^m_{(\beta)}$, consider the sequence of geodesic balls $B(x,\lambda^k)$ for $k \in \mathbb Z$ and constant $0<\lambda<1$. If $B(x,\lambda^k)$ is $\epsilon$-close to $B_{C(Y)}(0,\lambda^k)$ for some $C(Y) \in \mathcal C$ (equivalently, $\lambda^{-k} B(x,\lambda^k)$ is $\epsilon$-close to $B_{C(Y)}(0,1)$), we say $\lambda^k$ is a \textit{good scale} for $x$. If not, then we call $\lambda^k$ a \textit{bad scale}.

If $\lambda^k$ is a good scale for $x$, and so the rescaled ball, $\lambda^{-k} B(x,\lambda^k) = B(x_k,1)$ for $x_k = \lambda^{-k} x$, is isometric to $B_{C(Y)}(z,1) \subset C(Y)$ with $|z|<\epsilon$, we sometimes identify $B(x_k,1)$ with its image $B_{C(Y)}(z,1)$ and write $B(x_k,1) \subset C(Y)$ with $|x_k|<\epsilon$.

\begin{lemma}[Finitely many bad scales]\label{scaleslemma}
	Fix \(0 < \lambda < 1\) and \(\epsilon_0>0\). Then there is \(N = N(\epsilon_0, \lambda)\) such that for each \(x \in \R^m_{(\beta)}\), and every integer $k$ except for at most \(N\) bad scales, the geodesic ball \(B(x, \lambda^k)\) is \(\epsilon_0\)-close to a ball \(B_{C(Y)}(0, \lambda^k) \subset C(Y)\) for some \(C(Y) \in \mathcal{C}\). 
\end{lemma}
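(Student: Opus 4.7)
The plan is a two-threshold dichotomy on the radial coordinates $r_a^x$ of $x$ in the cone factors. Since dilations act as $\lambda(r_a,\theta_a,s_i)=(\lambda r_a,\theta_a,\lambda s_i)$, the rescaled ball $\lambda^{-k}B(x,\lambda^k)=B(x_k,1)$ has center $x_k$ with cone-radial coordinates $\lambda^{-k}r_a^x$. It therefore suffices to produce, for each $k$, a cone $C(Y)\in\mathcal{C}$ and a point $z\in C(Y)$ with $|z|<\epsilon_0$ such that $B(x_k,1)$ is isometric (after cuts) to $B_{C(Y)}(z,1)$.

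Fix thresholds $\delta_1=\epsilon_0/\sqrt{n}$ and $\delta_2=\max_a(\sin\pi\beta_a)^{-1}$. The role of $\delta_2$ is that a ball of radius $1$ around a point at distance $>\delta_2$ from the vertex of $C(S^1_{2\pi\beta_a})$ both avoids the vertex (since $\delta_2\geq 1$) and has Euclidean angular extent $2\arcsin(1/\delta_2)<2\pi\beta_a$, so after a single cut in $\theta_a$ it is isometric to an honest Euclidean disc in $\R^2$. Call an index $a$ \emph{near} at scale $\lambda^k$ if $\lambda^{-k}r_a^x<\delta_1$, and \emph{far} if $\lambda^{-k}r_a^x>\delta_2$; call the scale $\lambda^k$ \emph{good} if every $a$ is near or far. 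For a good scale with $I$ (resp.\ $J$) the set of near (resp.\ far) indices, take $C(Y)\in\mathcal{C}$ to retain $C(S^1_{2\pi\beta_a})$ for $a\in I$ and replace it by $\R^2$ for $a\in J$. The product isometry (identity on near factors, cut-and-unfold on far factors), followed by a translation in the $\R^{2|J|}$ and $\R^{m-2n}$ Euclidean parts of $C(Y)$, identifies $B(x_k,1)$ with $B_{C(Y)}(z,1)$, where $z$ has nonzero coordinates only in the near cone factors; hence $|z|^2\leq\sum_{a\in I}(\lambda^{-k}r_a^x)^2<|I|\delta_1^2\leq\epsilon_0^2$, as required.

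It remains to count bad scales. If $\lambda^k$ is bad then some $a$ satisfies $\delta_1\leq\lambda^{-k}r_a^x\leq\delta_2$, equivalently $r_a^x/\delta_2\leq\lambda^k\leq r_a^x/\delta_1$. Taking logarithms (noting $\log\lambda<0$), the admissible values of $k$ form an interval of length $\log(\delta_2/\delta_1)/\log(1/\lambda)$, independent of $r_a^x$, hence contain at most $\lceil\log(\delta_2/\delta_1)/\log(1/\lambda)\rceil+1$ integers. Summing over $a=1,\ldots,n$ yields a bound $N=N(\epsilon_0,\lambda)$ for the total number of bad scales (with $n$ and the $\beta_a$ treated as fixed parameters of the ambient space).

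The main technical obstacle is the far-case isometry: for the cut in the $\theta_a$-direction to unfold the ball to a genuine Euclidean disc, rather than a subset of a universal cover that winds around, one needs the angular-extent bound built into $\delta_2$; this is precisely where the discussion of cuts in Appendix \ref{2cone} is invoked. The near-case reduces to the identity map plus a Euclidean translation, and the final estimate is an elementary logarithmic counting argument.
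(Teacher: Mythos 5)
Your proof is correct and follows essentially the same strategy as the paper's: a per-factor near/far dichotomy on the rescaled radii $\lambda^{-k}r_a$, with the bad scales for each factor confined to a logarithmic window of length controlled by $\log(\delta_2/\delta_1)/\log(1/\lambda)$ and summed over the $n$ factors. The only cosmetic difference is that you absorb the $\sqrt{n}$ accumulation of errors into the threshold $\delta_1=\epsilon_0/\sqrt{n}$ up front, whereas the paper obtains $\sqrt{\ell}\,\epsilon_0$-closeness and compensates in the final count of $N$.
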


\begin{proof}
	We fix some \(c = c(\beta) > 1\) such that the following holds. Any unit geodesic ball in \(C(S^1_{2\pi\beta})\) whose center is at distance \(> c\) from the vertex, is isometric -after a cut- to the unit Euclidean ball in \(\R^2\).
	
	Without loss of generality, we can assume that \(r_1(x) \leq r_2(x) \leq \ldots \leq r_n(x)\) and \(s=0\);  we will write \(r_a = r_a(x)\). We rescale \(B(x, \lambda^k)\) to unit size, so we end up with \(B(x_k , 1)\) where \(x_k = \lambda^{-k} x\). The coordinates of \(x_k\) are \(r_a(x_k) = \lambda^{-k} r_a\).
	If \(r_1(x_k) = \lambda^{-k} r_1 > c\), that is
	\[ (\log\lambda^{-1}) k > \log r_1^{-1} + \log c , \]
	then \(B(x, 1)\) is \(\epsilon_0\)-close (indeed isometric) to the unit ball in \(\R^m\). If  \(\lambda^{-k}r_2 > c\)  and \(\lambda^{-k}r_1 < \epsilon_0\), that is
	\[ \log c + \log r_2^{-1} < (\log \lambda^{-1}) k < \log r_1^{-1} - \log \epsilon_0^{-1} , \]
	then \(B(x_k, 1)\) is \(\epsilon_0\)-close to the unit ball in \(C(S^1_{2\pi\beta_1}) \times \R^{m-2}\). 
	If \(\lambda^{-k}r_3 > c\)  and \(\lambda^{-k}r_2 < \epsilon_0\), that is
	\[ \log c + \log r_3^{-1} < (\log \lambda^{-1}) k < \log r_2^{-1} - \log \epsilon_0^{-1} , \]
	then \(B(x_k, 1)\) is \(\sqrt{2}\epsilon_0\)-close to the unit ball in \(C(S^1_{2\pi\beta_1}) \times C(S^1_{2\pi\beta_2}) \times \R^{m-4}\).
	We proceed likewise; if \(\lambda^{-k} r_n < \epsilon_0\), so 
	\[ (\log\lambda^{-1}) k < \log r_n^{-1} - \log \epsilon_0^{-1} , \]
	then \(B(x_k, 1)\) is \(\sqrt{d}\epsilon_0\)-close to the unit ball in \(C(S^1_{2\pi\beta_1}) \times \ldots \times  C(S^1_{2\pi\beta_n}) \times \R^{m-2n}\).  
	
	We conclude that the set of \(k \in \mathbf{Z}\) for which \(B(x, \lambda^k)\) is not \(\sqrt{n}\epsilon_0\)-close to a ball in some cone \(C(Y) \in \mathcal{C}\), is contained in a union of at most \(n\) intervals of length \((\log \lambda^{-1})^{-1} \left(\log \epsilon_0^{-1} + \log c \right) \). In particular, the lemma follows with
	\[ N = n (\log \lambda^{-1})^{-1} \left(\log \epsilon_0^{-1} + \log c + (1/2) \log n \right) . \]

\end{proof}

\subsection{Monotonicity of scaled \(L^2\)-norms} 

Recall that \(B_{\rho} \subset C(S^{m-1}_{\bar{g}})\) is the ball centered at zero of radius \(\rho\).

\begin{lemma}
	Let \(d \geq 0\) and assume that \(u \in W^{1, 2}(B_1)\)  is harmonic and \(L^2\)-orthogonal to \(\mathcal{H}_{\leq d}\), which means \(u = \sum_{d_i > d} \rho^{d_i} \phi_i\). Let \(d_{*} = \min \{d_i > d\} \). Then, for any \(0<\rho<1\), we have
	\[\|u\|_{B_{\rho}} \leq \rho^{d_*} \|u\|_{B_1} , \]
	with equality if and only if \(u\) is homogeneous of degree \(d_*\).
\end{lemma}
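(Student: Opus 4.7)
The plan is to reduce the statement to a direct computation using the eigenfunction expansion of Lemma~\ref{expansionlemma}, combined with the orthonormality of the spherical eigenfunctions on $(S^{m-1}, \bar{g})$ from Lemma~\ref{spectlemma}. First I would fix notation: choose $\{\phi_i\}$ to be an $L^2(S^{m-1}_{\bar{g}})$-orthonormal basis of eigenfunctions $-\Delta_{\bar{g}}\phi_i = \lambda_i \phi_i$ with indicial roots $d_i \geq 0$ satisfying $\lambda_i = d_i(d_i + m-2)$, and write $u = \sum_i c_i \rho^{d_i}\phi_i$ with $c_i \in \R$. The orthogonality assumption $u \perp \mathcal{H}_{\leq d}$ forces $c_i = 0$ whenever $d_i \leq d$, so only indices with $d_i \geq d_*$ contribute.

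Next I would decompose the integral in polar coordinates: since $g = d\rho^2 + \rho^2 \bar{g}$ the volume form is $dV_g = r^{m-1}\, dr\, dS_{\bar{g}}$, and the orthonormality of the $\phi_i$ kills all cross terms, giving
\begin{equation*}
    \int_{B_\rho} u^2 \, dV_g = \sum_i c_i^2 \int_0^\rho r^{2d_i + m - 1}\, dr = \sum_i \frac{c_i^2}{2d_i + m}\, \rho^{2d_i + m}.
\end{equation*}
The term-by-term integration is justified by Tonelli (the summands are non-negative) and the total is finite because $u \in L^2(B_1)$. Dividing by $\rho^m$ gives the compact formula
\begin{equation*}
    \|u\|_{B_\rho}^2 = \sum_{d_i \geq d_*} \frac{c_i^2}{2d_i + m}\, \rho^{2d_i}.
\end{equation*}

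From this identity the inequality is immediate: for $0 < \rho < 1$ and $d_i \geq d_*$ we have $\rho^{2d_i} \leq \rho^{2d_*}$, so factoring out $\rho^{2d_*}$ yields
\begin{equation*}
    \|u\|_{B_\rho}^2 \leq \rho^{2d_*}\sum_{d_i \geq d_*} \frac{c_i^2}{2d_i + m} = \rho^{2d_*}\|u\|_{B_1}^2,
\end{equation*}
which is the claimed bound after taking square roots. For the equality case, the inequality $\rho^{2d_i} \leq \rho^{2d_*}$ is strict for $\rho \in (0,1)$ whenever $d_i > d_*$; hence equality at some such $\rho$ forces $c_i = 0$ for every $d_i > d_*$, making $u = \sum_{d_i = d_*} c_i \rho^{d_*}\phi_i$ a homogeneous harmonic function of degree $d_*$. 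Conversely any such homogeneous $u$ trivially satisfies equality at every scale. There is no real obstacle; the only subtle point is the term-by-term integration, which is routine given the $L^2$-convergence of the expansion.
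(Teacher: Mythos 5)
Your proof is correct and follows essentially the same route as the paper: expand $u$ in the spherical eigenbasis, integrate term by term in polar coordinates using orthogonality, and compare $\rho^{2d_i}$ with $\rho^{2d_*}$. The only cosmetic difference is that you normalize the $\phi_i$ and carry explicit coefficients $c_i$, while the paper absorbs them into the eigenfunctions.
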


\begin{proof}
	By the orthogonality of eigenfunctions, we have
	\begin{align*}
	\int_{B_{\rho}} u^2 &= \sum_{i, j} \int_{B_\rho} r^{d_i} r^{d_j} \phi_i \phi_j \\
	&= \sum_i \frac{1}{2d_i + m}\rho^{2d_i + m} \int_{S^{m-1}} \phi_i^2 .
	\end{align*}
	From here,
	\begin{align*}
	\|u\|_{B_{\rho}}^2 &= \rho^{2d_*} \sum_i \frac{1}{2d_i +m} \rho^{2d_i-2d_*} \int_{S^{m-1}} \phi_i^2 \\
	&\leq \rho^{2d_*} \|u\|_{B_1}^2 .
	\end{align*}
	with equality if and only if $u$ is homogeneous of degree $d_*$.
\end{proof}

Next, we provide a version of the above lemma, considering balls which are not necessarily centered at the vertex. In particular, this establishes the quantitative decay property for harmonic functions orthogonal to \(\mathcal{H}_{\leq 2}\) for any good scale $B(x,\lambda^k)$.

Recall that, for \(x \in C(Y)\), we use \(|x|\) to denote the distance to the vertex and \(\mathcal{C}\) denotes the finite collection of model cones defined in Section \ref{saclesect}.

\begin{lemma} \label{monlemma}
	Let \(0 < \alpha < \mu\) and let \(0 < \lambda < 1\). There is \(\epsilon_0 = \epsilon_0 (\alpha, \lambda)\) with the following property. If \(x \in C(Y)\) for some \(C(Y) \in \mathcal{C}\) and \(|x| < \epsilon_0\), then for every harmonic function \(u\) in \(B(x, 1)\) which is \(L^2\)-orthogonal to \(\mathcal{H}_{\leq 2}(C(Y))\) we have
	\begin{equation*}
		\|u\|_{B(x, \lambda)} < \lambda^{2+ \alpha} \|u\|_{B(x, 1)} .
	\end{equation*}
\end{lemma}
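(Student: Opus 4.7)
The plan is a standard compactness-contradiction argument: suppose the lemma fails, extract a limit centered at the vertex, and invoke the preceding vertex-centered monotonicity. If the conclusion were false, for each integer $k \geq 1$ I could find $C(Y_k) \in \mathcal{C}$, a point $x_k \in C(Y_k)$ with $|x_k| < 1/k$, and a harmonic function $u_k \in W^{1,2}(B(x_k,1))$ that is $L^2(B(x_k,1))$-orthogonal to $\mathcal{H}_{\leq 2}(C(Y_k))$ and satisfies
\[
\|u_k\|_{B(x_k,\lambda)} \;\geq\; \lambda^{2+\alpha}\,\|u_k\|_{B(x_k,1)}.
\]
Since $\mathcal{C}$ is finite, I pass to a subsequence and assume $Y_k = Y$ is independent of $k$, and normalize $\|u_k\|_{B(x_k,1)} = 1$. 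Iterating Lemma \ref{Caccioppoli} on concentric balls gives uniform $W^{1,2}$ bounds for $u_k$ on each $B(x_k,r)$ with $r < 1$; since $B_{r} \subset B(x_k,\,r+|x_k|)$, this yields (for $k$ large) uniform $W^{1,2}$ bounds on every $B_r \Subset B_1$. Rellich compactness and a diagonal argument then produce $u_k \to u_\infty$ strongly in $L^2_{\mathrm{loc}}(B_1)$ and weakly in $W^{1,2}_{\mathrm{loc}}(B_1)$; testing the weak equation against compactly supported $\psi$ shows $u_\infty$ is weakly harmonic on $B_1$.

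\medskip

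Next I pass the orthogonality to the limit. For $P \in \mathcal{H}_{\leq 2}(C(Y))$, which is continuous hence bounded on $\overline{B_2}$, write
\[
0 \;=\; \int_{B(x_k,1)} u_k P \;=\; \int_{B_{1-\eta}} u_k P \;+\; \int_{B(x_k,1)\setminus B_{1-\eta}} u_k P.
\]
For fixed $\eta > 0$ and $k$ large, $B_{1-\eta} \Subset B(x_k,1)$ and the first integral converges to $\int_{B_{1-\eta}} u_\infty P$ by strong $L^2(B_{1-\eta})$-convergence plus boundedness of $P$. Cauchy-Schwarz bounds the second integral by $\|P\|_{L^\infty(B_2)}\cdot\vol(B(x_k,1)\setminus B_{1-\eta})^{1/2}$; since $B(x_k,1)\setminus B_{1-\eta} \subset B(x_k,1)\setminus B(x_k,1-\eta-|x_k|)$, this shell has volume $O(\eta)$ uniformly in large $k$. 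Sending first $k \to \infty$ and then $\eta \to 0$ gives $\int_{B_1} u_\infty P = 0$. Applying Lemma \ref{expansionlemma} on each $B_{1-\delta}$ yields $u_\infty = \sum_i c_i \rho^{d_i} \phi_i$ with coefficients independent of $\delta$, and the $L^2(S^{m-1})$-orthogonality of the $\phi_i$ then forces $c_i = 0$ whenever $d_i \leq 2$.

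\medskip

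A symmetric sandwich $B_{\lambda-\eta} \subset B(x_k,\lambda) \subset B_{\lambda+\eta}$, valid for large $k$, combined with strong $L^2$-convergence on $B_{\lambda+\eta}$, yields $\|u_k\|_{B(x_k,\lambda)} \to \|u_\infty\|_{B_\lambda}$; a one-sided version gives $\|u_\infty\|_{B_1} \leq 1$. The assumed inequality then implies $\|u_\infty\|_{B_\lambda} \geq \lambda^{2+\alpha} > 0$, so $u_\infty \not\equiv 0$ and $d_* := \min\{d_i > 2 : c_i \neq 0\}$ is well-defined. The preceding vertex-monotonicity lemma gives
\[
\lambda^{2+\alpha} \;\leq\; \|u_\infty\|_{B_\lambda} \;\leq\; \lambda^{d_*}\|u_\infty\|_{B_1} \;\leq\; \lambda^{d_*},
\]
forcing $d_* \leq 2+\alpha$ since $0 < \lambda < 1$. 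But by Proposition \ref{subquadraticprop} and $\alpha < \mu$ we have $\mathcal{H}_{\leq 2+\alpha} = \mathcal{H}_{\leq 2}$, so no indicial root lies in $(2, 2+\alpha]$ and $d_* > 2+\alpha$, a contradiction.

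\medskip

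The main obstacle is the moving domain: each $u_k$ lives on a ball $B(x_k,1)$ that is not centered at the vertex. I handle this throughout by sandwiching every off-center ball between concentric vertex-centered balls, and by absorbing the shell errors using the uniform normalization $\|u_k\|_{B(x_k,1)} = 1$ together with the continuity (hence local $L^\infty$-boundedness) of the test polynomials in $\mathcal{H}_{\leq 2}$. This ensures that both the orthogonality hypothesis and the scaled norms $\|u_k\|_{B(x_k,\rho)}$ pass to the limit, reducing the problem to the clean vertex-centered monotonicity of the preceding lemma.
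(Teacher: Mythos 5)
Your proof is correct, but it follows a genuinely different route from the paper's. The paper argues directly and quantitatively: since \(|x|<\epsilon\), it sandwiches \(B(x,\lambda)\subset B(0,\lambda+\epsilon)\) and \(B(0,1-\epsilon)\subset B(x,1)\), applies the vertex-centered decay \eqref{eqmon2} between the two concentric balls, and absorbs the volume-ratio losses into an explicit factor \(F(\epsilon)=\lambda^{d_*-2-\alpha}(1+\epsilon/\lambda)^{m+d_*}(1-\epsilon)^{-m-d_*}\); as \(F(0)=\lambda^{d_*-2-\alpha}<1\) and \(F\) is continuous, this yields an effective \(\epsilon_0\). Your compactness-contradiction argument reaches the same conclusion by soft means: you invest in Caccioppoli plus Rellich, a diagonal extraction on the moving domains \(B(x_k,1)\), and the shell estimates needed to pass the orthogonality and the scaled norms to the limit, and in exchange you obtain no explicit \(\epsilon_0\). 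The trade-off is standard: the paper's computation is two lines and constructive; yours mirrors the template of Lemma \ref{harmaproxlem} and would adapt to settings where the expansion is less explicit. Two small points to tighten: (i) the limit \(u_\infty\) is a priori only in \(W^{1,2}_{\mathrm{loc}}(B_1)\cap L^2(B_1)\), so the vertex monotonicity should be applied on \(B_{1-\delta}\) and the factor \((\lambda/(1-\delta))^{d_*}\) sent to \(\lambda^{d_*}\) as \(\delta\to 0\) --- you do this for the expansion but should say it for the final inequality as well; (ii) you read ``\(L^2\)-orthogonal'' as orthogonality in \(L^2(B(x,1))\), whereas the inequality \eqref{eqmon2} really requires the absence of subquadratic terms in the vertex-centered expansion; your limiting argument correctly converts the former into the latter as \(x_k\to 0\), so your proof establishes the lemma under your reading, and either version suffices for Proposition \ref{keystep}.
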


\begin{proof}
	Let \(x \in C(Y)\) have \(|x|< \epsilon\). Since \(B(x, \lambda) \subset B(0, \lambda + \epsilon)\), we clearly have
	\begin{equation} \label{eqmon1}
		\|u\|^2_{B(x, \lambda)} \leq \left(1 + \epsilon/\lambda\right)^m \|u\|^2_{B(0, \lambda + \epsilon)} .
	\end{equation}
	Since \(u\) is \(L^2\)-orthogonal to \(\mathcal{H}_{\leq 2}\), we have
	\begin{equation} \label{eqmon2}
		\|u\|^2_{B(0, \rho_1)} \leq (\rho_1/\rho_2)^{2d_*} \| u\|^2_{B(0, \rho_2)} ,
	\end{equation}
	where \(0 < \rho_1 < \rho_2\) and \(d_*\) is the smallest indicial root bigger than \(2\). We take \(\rho_1 = \lambda + \epsilon\) and \(\rho_2 = 1 - \epsilon\). Equations \eqref{eqmon1} and \eqref{eqmon2}, together with \(\|u\|_{B(0, 1-\epsilon)} \leq (1-\epsilon)^{-m} \|u\|_{B(1, x)} \), give us
	\begin{equation*}
		\|u\|_{B(x, \lambda)} \leq \lambda^{2+\alpha} F(\epsilon) \|u\|_{B(x, 1)} 
	\end{equation*}
	with 
	\[F(\epsilon) = \lambda^{d_* -2 -\alpha} \left( 1 + \epsilon/\lambda\right)^{m+d_*} \left(1-\epsilon\right)^{-m-d_*} .\]
	Note that \(2+\alpha < d_{*}\), by our choice of \(\alpha\).
	Since \(F\) is continuous with respect to \(\epsilon < 1\) and \(F(0) = \lambda^{d_* -2 -\alpha} < 1\), it suffices to take \(\epsilon_0\) such that \(F(\epsilon) < 1\) for all \(\epsilon < \epsilon_0\).
\end{proof}

\section{Proof of Theorem \ref{mainthm}}

As before, \(\mathcal{C}\) denotes the finite collection of model cones defined in Section \ref{saclesect} and, for \(x \in C(Y)\), we use \(|x|\) to denote the distance to the vertex. 

\begin{proposition}[Key step]\label{keystep}
	Let \(0< \alpha <\mu\). There are \(0 < \lambda < 1\), \(\delta>0\) and \(\epsilon_0>0\) with the following property. If \(x \in C(Y)\) for some \(C(Y) \in \mathcal{C}\), \(|x| < \epsilon_0\) and \(\Delta u = f \) with \(\|f\|_{B_1(x)} < \delta \), \(\|u\|_{B_1(x)} \leq 1\). Then there is \(P \in \mathcal{H}_{\leq 2} (C(Y)) \) such that
	\[\|u - P\|_{B_{\lambda}(x)} < \lambda^{2+\alpha} .\]
	Moreover, \(\|P\|_{B_1} \leq C\).
\end{proposition}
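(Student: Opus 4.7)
The plan is a compactness--contradiction argument. The key observation is that although a Fourier expansion of harmonic functions is only available on balls centered at the apex of a model cone, the requirement $|x| < \epsilon_0$ with $\epsilon_0$ arbitrarily small forces the center $x$ to coincide with the apex in the limit, where Lemma \ref{expansionlemma} and the monotonicity of Section \ref{Quant} apply. I will fix $\lambda$ first and obtain $\delta$ and $\epsilon_0$ only later, from the compactness. Since $\mathcal{C}$ is finite and Proposition \ref{subquadraticprop} identifies $\mathcal{H}_{\leq 2}(C(Y))$ for each model cone, the next indicial root above $2$ on each $C(Y)$ strictly exceeds $2+\alpha$; let $d_* > 2+\alpha$ be the infimum of these gaps over $\mathcal{C}$, and pick $\lambda \in (0, 1/2)$ small enough that $\lambda^{d_*} \leq \tfrac12 \lambda^{2+\alpha}$.

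Now assume, for contradiction, that no matching pair $(\delta, \epsilon_0)$ exists. Then there are sequences $C(Y_k) \in \mathcal{C}$, $x_k \in C(Y_k)$ with $|x_k| \to 0$, and weak solutions $u_k$ of $\Delta u_k = f_k$ on $B_1(x_k)$ with $\|u_k\|_{B_1(x_k)} \leq 1$ and $\|f_k\|_{B_1(x_k)} \to 0$, such that $\|u_k - P\|_{B_\lambda(x_k)} \geq \lambda^{2+\alpha}$ for every $P \in \mathcal{H}_{\leq 2}(C(Y_k))$. Passing to a subsequence, $C(Y_k) \equiv C(Y)$ is fixed; write $o$ for the apex of $C(Y)$, so $x_k \to o$. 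The scale-invariant Caccioppoli inequality, applied with cut-offs supported in $B(x_k, 1)$ and equal to one on $B(x_k, r)$ for each $r < 1$, gives uniform $W^{1,2}(B(x_k, r))$ bounds. Since every compact subset of $B(o, 1)$ lies eventually inside $B(x_k, r)$ for some $r<1$, Rellich's theorem together with a diagonal argument produces a subsequence (not relabeled) with $u_k \to u_\infty$ in $L^2_{\mathrm{loc}}(B(o, 1))$ and weakly in $W^{1,2}_{\mathrm{loc}}(B(o, 1))$. Passing to the limit in $\int \nabla u_k \cdot \nabla \psi = \int f_k \psi$ shows that $u_\infty$ is weakly harmonic on $B(o, 1)$, and lower semicontinuity yields $\|u_\infty\|_{B(o, 1)} \leq 1$.

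Since $u_\infty$ is harmonic on a ball centered at the apex, Lemma \ref{expansionlemma} (applied to the rescalings $y \mapsto u_\infty(ry)$ for each $r < 1$, then letting $r \uparrow 1$) provides a unique expansion $u_\infty = \sum_i c_i \rho^{d_i} \phi_i$ on $B(o, 1)$. I define
\[ P := \sum_{d_i \leq 2} c_i \rho^{d_i} \phi_i \in \mathcal{H}_{\leq 2}(C(Y)), \]
so that $u_\infty - P$ is $L^2$-orthogonal to $\mathcal{H}_{\leq 2}$ on every ball centered at $o$. The first (at-the-apex) monotonicity lemma of Section \ref{Quant}, applied on $B(o, r)$ for $r < 1$ and passed to $r \uparrow 1$, then gives
\[ \|u_\infty - P\|_{B(o, \lambda)} \leq \lambda^{d_*}\, \|u_\infty - P\|_{B(o, 1)} \leq \lambda^{d_*} \leq \tfrac12 \lambda^{2+\alpha}. \]
Because $x_k \to o$, the symmetric difference $B(x_k, \lambda) \triangle B(o, \lambda)$ has vanishing measure, and the $L^2_{\mathrm{loc}}$ convergence of $u_k$ implies $\|u_k - P\|_{B_\lambda(x_k)} \to \|u_\infty - P\|_{B(o, \lambda)}$, which is strictly less than $\lambda^{2+\alpha}$ for all $k$ large, contradicting the standing hypothesis (with $P \in \mathcal{H}_{\leq 2}(C(Y)) = \mathcal{H}_{\leq 2}(C(Y_k))$). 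The moreover clause follows from Parseval at the apex, $\|P\|_{B(o, 1)}^2 \leq \|u_\infty\|_{B(o, 1)}^2 \leq 1$, so one may take $C = 1$.

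The main technical obstacle is the compactness step on the moving domains $B_1(x_k)$: one cannot apply Rellich on a fixed ball since the centers shift, and the nested exhaustion of $B(o, 1)$ by balls $B(x_k, r)$ with $r \uparrow 1$, together with the scaled Caccioppoli estimate, is essential to extract the strong $L^2_{\mathrm{loc}}$ limit used to compare $\|u_k - P\|_{B_\lambda(x_k)}$ with $\|u_\infty - P\|_{B(o, \lambda)}$. Once that limit is secured, the argument is simply a combination of the explicit description of $\mathcal{H}_{\leq 2}$ in Proposition \ref{subquadraticprop} with the monotonicity of Section \ref{Quant}.
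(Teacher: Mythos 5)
Your argument is correct in substance, but it takes a genuinely different route from the paper's. The paper argues directly and quantitatively on the fixed ball $B(x,1)$: it invokes the harmonic approximation Lemma \ref{harmaproxlem} to replace $u$ by a harmonic $h$ on $B(x,1/2)$, splits $h=h_{\leq 2}+h_{>2}$ relative to $\mathcal{H}_{\leq 2}$, and controls $\|h_{>2}\|_{B(x,\lambda)}$ via the \emph{off-center} monotonicity Lemma \ref{monlemma}, which is exactly where the threshold $\epsilon_0(\alpha',\lambda)$ comes from; an auxiliary exponent $\alpha'\in(\alpha,\mu)$ is introduced so that choosing $\lambda$ small absorbs the constant $C$ in front of $(2\lambda)^{2+\alpha'}$. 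You instead run a single compactness--contradiction over the moving centers $x_k\to o$, which lets you bypass Lemma \ref{monlemma} entirely and use only the at-the-apex expansion (Lemma \ref{expansionlemma}) and at-the-apex monotonicity, where the $L^2$-orthogonality of the homogeneous pieces is exact; the price is that $\delta$ and $\epsilon_0$ are produced non-effectively, whereas the paper's route gives them explicitly, and you must do the compactness on shifting domains rather than quoting Lemma \ref{harmaproxlem} on a fixed ball. Two small points deserve a sentence each in a final write-up: (i) the claim $\|u_k-P\|_{B(x_k,\lambda)}\to\|u_\infty-P\|_{B(o,\lambda)}$ needs not just that $|B(x_k,\lambda)\triangle B(o,\lambda)|\to 0$ but also the uniform integrability of $|u_k|^2$ near $\partial B(o,\lambda)$, which does follow from the strong $L^2$ convergence on a slightly larger compact ball, so it should be stated; (ii) the negation of the proposition includes the clause $\|P\|_{B_1}\leq C$, so strictly you should assume that no $P$ satisfies \emph{both} conclusions --- harmless here, since your limiting $P$ is the orthogonal projection of $u_\infty$ and satisfies $\|P\|_{B(o,1)}\leq 1$, contradicting even the correct negation. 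With those clarifications the proof stands, and it is arguably cleaner than the paper's in that the sub/super-quadratic decomposition is only ever performed at the apex.
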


\begin{proof}
	We take \(\|f\|_{B_1(x)} < \delta\) small so that, by the harmonic approximation Lemma \ref{harmaproxlem}, there is \(h \in W^{1, 2}(B(x, 1/2))\) harmonic with \(\|u-h\|_{B(x, 1/2)} < \epsilon\) for some \(\epsilon\) to be determined later. Decompose \(h\) into its sub-quadratic and super-quadratic parts, that is \( h = h_{\leq 2} + h_{>2} \) with \(h_{\leq 2} \in \mathcal{H}_{\leq 2}\) and \(h_{>2}\) being \(L^2\)-orthogonal to \(\mathcal{H}_{\leq 2}\). Clearly, \(\|h\|_{B(x, 1/2)} \leq \|u-h\|_{B(x,1/2)} + \|u\|_{B(x,1/2)} \leq C\) and the same holds for its components \(h_{\leq 2}\), \(h_{>2}\).
	
	Fix \(\alpha < \alpha' < \mu\) and let \(0 < \lambda < 1/2\) to be determined. Let \(\epsilon_0 = \epsilon_0' (\alpha', \lambda')/2\) with \(\epsilon_0'(\alpha', \lambda')\) be as in Lemma \ref{monlemma} and \(\lambda'=2\lambda\).
	Set \(P = h_{\leq 2}\), so
	\begin{align*}
		\| u - P\|_{B(x, \lambda)} &\leq \|u-h\|_{B(x, \lambda)} + \|h_{>2}\|_{B(x, \lambda)} \\
		&\leq (\lambda/2)^{-m} \|u-h\|_{B(x, 1/2)} + (2\lambda)^{2+\alpha'} \|h_{>2}\|_{B(x, 1/2)} \\
		&\leq (\lambda/2)^{-m} \epsilon + C (2\lambda)^{2+\alpha'}	.	
	\end{align*}
	Take first \(0<\lambda< 1/2\) such that \(C (2\lambda)^{2+\alpha'} < \lambda^{2+\alpha} / 2\) and then \(\epsilon>0\) so that \((\lambda/2)^{-m}\epsilon <\lambda^{2+\alpha}/2\).
\end{proof}

\emph{We take \(\delta\), \(\epsilon_0\) and \(\lambda\) as in the key step Proposition \ref{keystep}} and proceed with the proof of Theorem \ref{mainthm}. So, let
\(\Delta u = f\) on \(B_2\) and fix some \(x \in B_1\). We can assume \(f(x)=0\). Indeed, if \(u' = u - f(x)r_1^2/4\), then \(\Delta u' (x) =0 \) and \(|Du|_{C^{\alpha}(x)} \leq |Du'|_{C^{\alpha}(x)} + (1/4)|f(x)||Dr_1^2|_{C^{\alpha}(x)}\) and we clearly have \(|Dr_1^2|_{C^{\alpha}(x)} \leq C\). Moreover, dividing \(u\) by \(\|u\|_{B(x, 1)} + (1/\delta) |f|_{C^{\alpha}(x)}\), 
we may assume that \(\|u\|_{B(x, 1)} \leq 1\) and \(|f|_{C^{\alpha}(x)} < \delta\).

The H\"older coefficient \(|f|_{C^{\alpha}(x)}\) is given by
\begin{equation*}
\sup_{d(x, y)<1} \frac{|f(y)-f(x)|}{d(x, y)^{\alpha}} .
\end{equation*}
In particular, we are assuming that
\begin{equation*}
\left( \rho^{-m} \int_{B(x, \rho)} |f-f(x)|^2 \right)^{1/2} < \delta \rho^{\alpha} 
\end{equation*} 
for all \(0<\rho<1\).

For \(k=0, 1, \ldots, \infty\), we consider \(B(x, \lambda^k)\). Rescaling to unit size, we have \(\lambda^{-k}B(x, \lambda^k) = B(x_k, 1)\) with \(x_k=\lambda^{-k}x\). By the finitely many bad scales Lemma \ref{scaleslemma}, there is \(N=N(\epsilon_0, \lambda)\) such that all but at most \(N\) of the scales \(\lambda^k\) are good. More precisely, the scale \(\lambda^k\) being good means that, after introducing cuts on some of the cone factors, \(B(x_k, 1) \subset C(Y)\) for some \(C(Y) \in \mathcal{C}\) with \(|x_k| < \epsilon_0\). 

The set of bad scales can be divided into a collection of at most \(n\) disjoint intervals \(I_1 = [a_1, b_1), \ldots, I_n = [a_n, b_n)\) with the following property. If \(k < a_1\), then \(\lambda^k\) is a good scale with model cone \(C(Y_0) = \R^m_{(\beta)}\). If \(b_1 \leq k < a_2\), then \(\lambda^k\) is a good scale whose model cone \(C(Y_1) = \R^m_{(\beta')}\) has replaced a cone factor \(C(S^1_{2\pi\beta_a})\) of the previous model cone, \(C(Y_0)\), with \(\R^2\); and so on. Finally, for \(k \geq b_n \) we have \(C(Y_d) = \R^m\). The set of bad scales depends on the point \(x\), for regular points arbitrary close to the conical set the numbers \(b_n\) get arbitrary large. See Figure \ref{fig:scales}. For singular points, \(I_n = \emptyset\) and the model cone \(C(Y)\) agrees with the tangent cone at \(x\) for \(k\) sufficiently large. We do not have uniform control on how large \(k\) has to be so that \(\lambda^k\) is a good scale. All we shall use is the uniform bound on the number of bad scales, \(\sum_a |I_a| < N\).

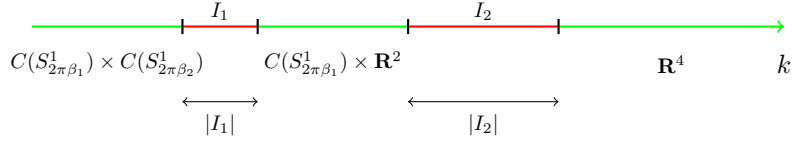
\begin{figure}
	\centering
	\begin{tikzpicture}
	
	\draw[thick, green](-4,0) to (-2,0);
	\draw[thick, red](-2,0) to (-1,0);
	\draw[thick, green](-1, 0) to (1, 0);
	\draw[thick, red](1,0) to (3,0);
	\draw[thick, green, ->](3,0) to (6,0);
	\draw[thick](-2,-.1) to (-2,.1);
	\draw[thick](-1,-.1) to (-1,.1);
	\draw[thick](1,-.1) to (1,.1);
	\draw[thick](3,-.1) to (3,.1);
	\draw[<->](-2, -1) to (-1,-1);
	\draw[<->](1, -1) to (3, -1);

	\draw  (-1.5, -1.3) node (asd) [scale=.8] {\(|I_1|\)};
	\draw  (2, -1.3) node (asd) [scale=.8] {\(|I_2|\)};
	\draw  (-3, -.5) node (asd) [scale=.8] {\(C(S^1_{2\pi\beta_1}) \times C(S^1_{2\pi\beta_2})\)};
	\draw  (0, -.5) node (asd) [scale=.8] {\(C(S^1_{2\pi\beta_1}) \times \mathbf{R}^2\)};
	\draw  (4.5, -.5) node (asd) [scale=.8] {\(\mathbf{R}^4\)};
	\draw  (6, -.5) node (asd) [scale=1] {\(k\)};
	
	\draw (-1.5, .2) node (asd) [scale=.8]
	{\(I_1\)};
	\draw (2, .2) node (asd) [scale=.8]
	{\(I_2\)};
		
	\end{tikzpicture}
	\caption{This picture illustrates the bad scales, in red, for a point \(x=(r_1 e^{i\theta_1}, r_2 e^{i\theta_2})\) lying on the regular locus of \(C(S^1_{2\pi\beta_1}) \times C(S^1_{2\pi\beta_2})\) with \(0<r_1 \leq r_2 \). The model cones $C(Y)$ are indicated for the good scales.}
	\label{fig:scales}
\end{figure}

Set \(u_0 = u\).
Suppose \(\lambda^k\) is a good scale for \(k=0, \ldots, a_1 -1\).
Since \(k=0\) is a good scale, this means that \(|x| < \epsilon_0\). By assumption \(\|u\|_{B(x, 1)} \leq 1\) and \(\|f\|_{B(x, 1)} < \delta\), so we can apply the key step Proposition \ref{keystep} to obtain \(P_1 \in \mathcal{H}_{\leq 2}\) such that \(\|u_0 - P_1\|_{B(x, \lambda)} < \lambda^{2+\alpha} \) and \(\|P_1\|_{B(0,1)} \leq C\).

Set \(u_1 = u_0 - P_1\), so \(\|u_1\|_{B(x, \lambda)} < \lambda^{2+\alpha}\). If \(k=1\) is a good scale, then \(B(x, \lambda) = \lambda B(x_1, 1) \) with \(|x_1| = \lambda^{-1} |x| < \epsilon_0\). Let \(\tilde{u}_1: B(x_1, 1) \to \R\) be given by \(\tilde{u}_1 (\cdot) = \lambda^{-2-\alpha} u_1 (\lambda \cdot) \). We clearly have \(\|\tu_1\|_{B(x_1, 1)}= \lambda^{-2-\alpha} \|u_1\|_{B(x,\lambda)} < 1\). Note that \(\Delta \tilde{u}_1 = \lambda^{-\alpha} f(\lambda \cdot) \), so \(\|\Delta \tilde{u}_1\|_{B(x_1, 1)} = \lambda^{-\alpha} \|f\|_{B(x, \lambda)} < \delta \). We are again in the setting of Proposition \ref{keystep}, so there is \(\tilde{P}_2 \in \mathcal{H}_{\leq 2}\) -with \(\|\tilde{P}_2\|_{B(x_1, 1)} \leq C\)- such that \(\| \tu_1 - \tilde{P}_2\|_{B(x_1, \lambda)} < \lambda^{2+\alpha}\). Set
\begin{align*}
	u_2 &= \lambda^{2+\alpha} (\tu_1 - \tilde{P}_2) (\lambda^{-1} \cdot) \\
	&= u_1 - P_2 ,
\end{align*}
with \(P_2 = \lambda^{2+\alpha} \tilde{P}_2(\lambda^{-1}\cdot)\). Clearly, \(\lambda^{-1}B(x, \lambda^2) = B(x_1, \lambda) \), and we check that 
\begin{align*}
	\|u_2\|_{B(x, \lambda^2)} &= \lambda^{2+\alpha} \| \tu_1 - \tilde{P}_2\|_{B(x_1, \lambda)} \\ &< \lambda^{2(2+\alpha)} . 
\end{align*}
We proceed the same way. We obtain \(P_1, \ldots, P_k \in \mathcal{H}_{\leq 2}\) such that
\(u_k = u - (P_1 + \ldots +P_k)\) satisfies \(\|u_k\|_{B(x, \lambda^k)} < \lambda^{(2+\alpha)k}\) for \(k=0, \ldots, a_1\).

Now, for \(k \in I_1 = [a_1, b_1)\) we have bad scales, and we set \(P_k = 0\), that is \(u_{k+1} = u_k\). Clearly, \(\|u_{k+1}\|_{B(x, \lambda^{k+1})} \leq \lambda^{-m/2} \|u_k\|_{B(x, \lambda^k)}\) whenever \(u_{k+1} = u_k\). If we let \(Q_1 = (\lambda^{-m/2} \lambda^{-(2+\alpha)})^{|I_1|}\),  we can then guarantee that  \(\|u_k\|_{B(x, \lambda^k)} \leq Q_1 \lambda^{(2+\alpha)k}\) for all \(k=0, \ldots, b_1\).

Consider now \(k= b_1\), by the previous step, we have \(\|u_k\|_{B(x, \lambda^k)} < Q_1 \lambda^{(2+\alpha)k}\). Let
\[\tu_k = Q_1^{-1} \lambda^{-(2+\alpha)k} u_k(\lambda^k \cdot) ,\]
so \(\| \tu\|_{B(x_k, 1)} = Q^{-1} \lambda^{-(2+\alpha)k} \|u_k\|_{B(x, \lambda^k)} < 1\); and \(\Delta \tu_k = Q_1^{-1} \lambda^{-\alpha k} f(\lambda^k \cdot)\), so \(\|\Delta \tu_k\|_{B(x_k, 1)} < Q_1^{-1} \delta< \delta\). Since \(k=b_1\) is a good scale, we can identify -after doing a cut and translating- \(B(x_k, 1) \subset C(Y_1)\) with \(|x_k| < \epsilon_0\). By Proposition \ref{keystep} we have \(\tilde{P}_{k+1} \in \mathcal{H}_{\leq 2} (C(Y_1))\) such that \(\|\tu_k - \tilde{P}_{k+1}\|_{B(x_k, \lambda)} < \lambda^{2+\alpha}\). Set
\begin{align*}
	u_{k+1} &= Q_1 \lambda^{(2+\alpha)k} (\tu_k - \tilde{P}_{k+1}) (\lambda^k \cdot) \\
	&= u_k - P_{k+1} ,
\end{align*}
with \(P_{k+1} = Q_1 \lambda^{(2+\alpha)k} \tilde{P}_{k+1} (\lambda^{-k} \cdot)\). We check that
\begin{align*}
	\|u_{k+1}\|_{B(x, \lambda^{k+1})} &= Q_1 \lambda^{(2+\alpha)k} \|\tu - \tilde{P}_{k+1}\|_{B(x_k, \lambda)} \\
	&< Q_1 \lambda^{(2+\alpha)(k+1)} .
\end{align*}
We proceed the same in the good range of scales \(k=b_1, \ldots, a_2-1 \). We have constructed \(u_k = u - (P_1 + \ldots +P_k)\) that satisfy \(\|u_k\|_{B(x, \lambda^k)} < Q_1 \lambda^{(2+\alpha)k}\) for \(k=0, \ldots, a_2\).

When \(k=a_2\), we hit again a range of bad scales \(I_2 = [a_2, b_2)\). Same as before, for \(k= a_2, \ldots, b_2-1\), we set \(u_{k+1} = u_k\). We let \(Q_2 = (\lambda^{-m/2} \lambda^{-(2+\alpha)})^{|I_2|}\), so  \(\|u_k\|_{B(x, \lambda^k)} < Q_1 Q_2 \lambda^{(2+\alpha)k}\) for \(k=0, \ldots, b_2\). For the good range \(k=b_2, \ldots, a_3-1\) we set 
\[\tu_k = Q_1^{-1}Q_2^{-1} \lambda^{-(2+\alpha)k} u_k(\lambda^k \cdot) .\]
We use Proposition \ref{keystep} to obtain \(\tilde{P}_{k+1} \in \mathcal{H}_{\leq 2}(C(Y_2))\) such that, if we set \(P_{k+1} = Q_1 Q_2\lambda^{(2+\alpha)k} \tilde{P}_{k+1} (\lambda^{-k} \cdot)\) and \(u_{k+1} = u_k - P_{k+1}\), then \(\|u_k\|_{B(x, \lambda^k)} < Q_1Q_2 \lambda^{(2+\alpha)k}\) for all \(k=0, \ldots, a_3\). We proceed the same way, after \(I_n\) there are no more bad scales, so all \(k \geq b_n\) are good.
As a result, we get a sequence \(u_k = u - (P_1 + \ldots + P_k)\) with
\begin{equation*}
	\|u_k\|_{B(x, \lambda^k)} \leq C \lambda^{(2+\alpha)k}
\end{equation*}
for all \(k=0, 1, \ldots, \infty\). Here, \(C = (\lambda^{-m/2} \lambda^{-(2+\alpha)})^{N}\) with \(\sum_a |I_a| \leq N=N(\epsilon_0, \lambda)\) given by Lemma \ref{scaleslemma}.

Let \(\tau_k = DP_k (x)\). Because \(D\) is homogeneous of degree two, we have \(D\tilde{P}(\lambda^{-k} \cdot) = \lambda^{-2k} (D\tilde{P}_k)(\lambda^{-k} \cdot)\). Up to a constant factor, \(P_k\) is equal to \(\lambda^{(2+\alpha)k} \tilde{P}_k (\lambda^{-k}\cdot)\). Therefore, \(\tau_k = \lambda^{k\alpha} D\tilde{P}_k(x_k)\). One detail is that we have uniform bounds \(|D'\tilde{P}_k|(x_k) \leq C\) for all second order derivatives \(D'\) of the corresponding model cone, after the cut and translation identification is done. The model cones are less singular than the original, and we get estimates on more derivatives; in particular we can express \(D\) as a combination of operators \(D'\) with uniformly bounded (smooth) coefficients. The Euclidean directions in \(D\) will remain unchanged, but the cone directions might change into Euclidean. We are performing the cuts uniformly away from the tips of the cones. The basic fact we need is that the vector fields \(\p_r\) and \(r^{-1}\p_{\theta}\) on \(\R^2 \setminus \{0\}\) can be written as a combination of the Cartesian vector fields \(\p_{s_1}, \p_{s_2}\) with uniformly bounded (smooth) coefficients on discs of radius \(1\) which are at distance \(\geq 2\), say, from the origin. The outcome is that
\begin{equation*}
	|\tau_k| \leq C \lambda^{k\alpha} ,
\end{equation*}
so the series \(\sum_k \tau_k\) converges and we set \(\tau = \sum_{k=0}^{\infty} \tau_k\).

By Lemma \ref{auxlem}, in order to prove the estimate \eqref{mainestimate}, it is enough to show that
\begin{equation} \label{mainest2}
	\|Du - (\tau_1 + \ldots +\tau_k)\|_{B(x, \lambda^k)} \leq C \lambda^{k\alpha} .
\end{equation}
The scaled version of Proposition \ref{L2bound} gives us 

\begin{equation}\label{scaledL2}
	\begin{aligned} 
	\|Du_k\|_{B(x, \lambda^k)} &\leq C (\|f\|_{B_{k-1}} + \lambda^{-2k}\|u_k\|_{B_{k-1}}) \\
	&\leq C \lambda^{k\alpha} .
	\end{aligned}
\end{equation}

We rewrite \(Du_k\) as follows
\begin{equation}\label{Duk}
	\begin{aligned} 
	Du_k &= Du - (DP_1 + \ldots + DP_k) \\
	&= Du - (\tau_1 + \ldots + \tau_k) - (DP_1 - DP_1(x) + \ldots +DP_k - DP_k(x)) .
	\end{aligned}
\end{equation}

Recall that \(D' \tilde{P}_j =\) constant for all second derivatives \(D' \in \mathcal{D}_2\) of the corresponding model cone. We use again that \(D\) can be expressed as a combination of derivatives \(D'\) with uniform control on the H\"older norm of the coefficients. More precisely, the only case in which \(DP_j\) is not constant is when \(D\) contains a conical derivative, say \(\p_{r_a}\), and \(r_a(x) \neq 0\). Then for all \(j \geq j_0\), such that \(\lambda^{-j_0}r_a(x) > c_{\beta}\) we introduce a cut in the \(C(S^1_{2\pi\beta_a})\) factor. The coefficients of \(\p_r\) with respect to the Cartesian coordinate vector fields, in a disc of radius \(1\) which is at distance \(R\) from the origin, are constant up to an \(O(1/R)\) error. We conclude that the H\"older semi-norm of the coefficients of \(D\) in terms of \(D'\) -and hence the H\"older semi-norm of \(D\tilde{P}_j\)- is bounded by a uniform constant multiple of \(\lambda^j r_a(x)\). Therefore,
\begin{align*}
	\sum_{j=1}^{k} \|DP_j - DP_j(x)\|_{B(x, \lambda^k)} &= \sum_j  \lambda^{\alpha j} \|D\tilde{P}_j - D\tilde{P}_j(x_j)\|_{B(x_j, \lambda^{k-j})} \\
	&\leq C \lambda^{\alpha k} \sum_{j=j_0}^{\infty} \lambda^j r_a(x)^{-1} . 
\end{align*}
Now, \(\sum_{j=j_0}^{\infty} \lambda^j r_a(x)^{-1} = \lambda^{j_0} r_a(x)^{-1} \sum_{j=0}^{\infty} \lambda^j \) and \(\lambda^{j_0}r_a(x)^{-1} < 1/c_{\beta}\). So, 
\begin{equation} \label{calphapol}
	\sum_{j=1}^{k} \|DP_j - DP_j(x)\|_{B(x, \lambda^k)} \leq C \lambda^{k\alpha} .
\end{equation}

The main estimate \eqref{mainest2} follows from equations \eqref{scaledL2}, \eqref{Duk} and \eqref{calphapol}.

\begin{remark}[Vanishing of conical derivatives at the singular locus]
	Note that if \(x\) lies on the singular set, say \(r_a(x) =0\), and if \(D\) is a mixed Euclidean-conical or conical-conical derivative that contains either \(\p_{r_a}\) or \(r_a^{-1}\p_{\theta_a}\); then \(DP=0\) for all \(P \in \mathcal{H}_{\leq 2}\) involved in the proof. This means that \(\tau_k = 0\) for all \(k\), so \(\tau=0\) and \(Du(x)=0\).
\end{remark}

\begin{remark}[Lower order estimates]
	The same method of proof of Theorem \ref{mainthm}, but subtracting the sub-linear harmonic parts, leads to first order estimates. Indeed, even weaker assumptions on the regularity of \(f\), lead to a \(C^{\alpha}\)-bound of the  gradient components:
	\begin{equation*}
	\frac{\p u}{\p r_a}, \hspace{2mm} \frac{1}{r_a}\frac{\p u}{\p \theta_a}, \hspace{2mm} \frac{\p u}{\p s_i} .
	\end{equation*}
	Again, the sublinear harmonic functions only depend on the Euclidean variables, so the normal component of \(\nabla u\) along \(\{r_a=0\}\) must vanish. 
	
	The vanishing condition admits the following geometric explanation. The holonomy of \(g\) along a loop that goes around \(\{r_a = 0\}\) is non-trivial on the \(C(S^1_{2\pi\beta_a})\) factor. Take small loops around \(\{r_a=0\}\) that shrink to a point and parallel transport \(\nabla u\). If \(|\nabla u|\) and \(|\p_{\theta_a} \nabla u|\) are uniformly bounded, then the gradient must become invariant under parallel transport as the loops  shrink. The only way this can happen is if the \(C(S^1_{2\pi\beta_a})\)-component of \(\nabla u\) vanishes. 
\end{remark}

\subsection{K\"ahler metrics}
Assume \(m\) is even. Introducing complex coordinates \(z_a = (1/\beta_a)r_a e^{i\theta_a}\) on each cone factor, identifies  \(\R^m_{(\beta)}\) with \(\C^{m/2}\) endowed with the singular K\"ahler metric
\[\omega_{(\beta)} = \sum_a |z_a|^{2\beta_a-2} idz_ad\bar{z}_a + \omega_{\C^{m/2-n}} . \]
Theorem \ref{mainthm} provides a bound for \(|\dd u|_{\alpha}\), in the sense that the components of \(\dd u\)  with respect to an \(\omega_{(\beta)}\)-orthonormal co-frame of \((1,1)\)-forms are \(C^{\alpha}\). Given a \(C^{\alpha}\) closed \((1,1)\)-form \(\eta\) such that \(\omega := \omega_{(\beta)} + \eta >0 \); it is possible to extend Theorem \ref{mainthm} to \(\Delta_{\omega}\). This is done, by means of the freezing coefficients method, in \cite[Section 3.5]{GuoSongII}; see also \cite[Section 4.2]{Donaldson} and \cite[Section 2]{Brendle}.

\appendix

\section{The two dimensional cone}

\subsection{Geometry of balls} \label{2cone}

We recall here some basic facts on the geometry of the two-dimensional cone \(C(S^1_{{2\pi\beta}})\). In polar coordinates on \(\R^2\), the metric is
\[g = dr^2 + \beta^2 r^2 d\theta^2  \]
and \(r\) measures the intrinsic distance to the apex, located at \(0\).

If we cut \(C(S^1_{2\pi\beta})\) along a geodesic ray starting from the vertex, say \(\theta =\) constant, we get a wedge in the Euclidean plane of angle \(2\pi\beta\). Given a point \(p \in C(S^1_{2\pi\beta}) \setminus \{o\}\), we can always cut along the `opposite ray' to \(p\). More precisely, \(p\) lies on a circle of length \(2\pi\beta r\) given by the points which are at distance \(r=r(p)\) from the vertex. On that circle, there is a unique antipodal point \(q\) which is at maximal distance \(\pi \beta r\) from \(p\). We cut along the ray starting from \(o\) and going through \(q\),  so then \(p\) will lie on the `middle of the wedge', which means equidistant from the two sides of the wedge. In coordinates, this means that we choose \(\theta \in (-\pi, \pi)\) with \(\theta(p)=0\) and we set \(\tilde{\theta}=\beta\theta\) to get the Euclidean wedge \(-\pi\beta<\tilde{\theta}<\pi\beta\).

We look at geodesic balls \(B(p, \rho) \subset C(S^1_{2\pi\beta}) \). The situation is pretty clear when \(p\) is the vertex. All balls centered at \(o\) are isometric after rescaling to unit size, that is \(\rho^{-1}B(o, \rho)=B(o, 1)\). We consider \(p \in C(S^1_{2\pi\beta})\setminus \{o\}\). It is clear that \(\rho^{-1}B(p, \rho)\) will be isometric to the Euclidean unit disc if \(\rho\) is small;  and for large values of \(\rho\) it will be isometric to a unit ball in \(C(S^1_{2\pi\beta})\) with its center very close to the origin. We can quantify this transition as follows. First, it is enough to consider unit balls  \( B(\tilde{p}, 1) \subset C(S^1_{2\pi\beta})\) because
\(\rho^{-1}B(p, \rho) = B(\rho^{-1}p, 1)\) where \(\rho^{-1}(r, \theta)=(\rho^{-1}r, \theta)\). Taking a cut, it is clear from Figure \ref{fig:balls} that \(B(\tilde{p}, 1)\) is isometric to the Euclidean unit disc if \(d(\tilde{p}, o)>c_{\beta}\), where 
\[ c_{\beta} = \begin{cases}
\sin (\pi \beta) \hspace{2mm} \mbox{if} \hspace{2mm} 0 < \beta \leq 1/2 \\
1 \hspace{2mm} \mbox{if} \hspace{2mm} \beta > 1/2 .
\end{cases}\]

Given \(\epsilon >0\), we say that \(B(\tilde{p}, 1)\) is \(\epsilon\)-close to \(B(o, 1)\), if \(d(\tilde{p}, o)< \epsilon\). The conclusion is that, if we let \(p \in C(S^1_{2\pi\beta}) \setminus \{o\}\) and \(r=r(p)\), then
\[\rho^{-1} B(p, \rho) = \begin{cases}
B_1^{E} \subset \R^2 \hspace{2mm} \mbox{if} \hspace{2mm} \rho < c_{\beta} r \\
B(\tilde{p}, 1) \hspace{2mm} \mbox{with } d(\tilde{p}, o) < \epsilon \hspace{1mm}  \mbox{if} \hspace{2mm} \rho > \epsilon^{-1} r
\end{cases} \]
We take logarithms, that is we introduce a scale parameter \(s\) and write \(\rho=e^s\). We say that a scale \(s\) is good if \(\rho^{-1}B(p, \rho)\) is isometric either to the Euclidean unit disc \(B^E_1\) or to \(B(\tilde{p}, \epsilon)\) with \(d(\tilde{p, o})<\epsilon\). By the above discussion, if either \(s<\log c_{\beta} + \log r\) or \(s > \log(\epsilon^{-1}) + \log r\) then \(s\) is a good scale. In other words, the set of bad scales has size uniformly bounded above, independently of \(p\), by \(\log(\epsilon^{-1}) + \log (c_\beta^{-1})\). The point is that, while we do not have uniform control over \(p\) of which scales are good or bad, we do have a uniform bound on the set of bad scales.

\begin{figure}
	\centering
	\begin{tikzpicture}[scale=.8]
	
	\draw[] (-4,0) to (0, 2);
	\draw[] (-4,0) to (0,-2);
	\draw[dotted] (-4,0) to (0,0);
	\draw[dashed] (-1, 1.5) to (-.25, 0);
	\draw[dashed] (-1, -1.5) to (-.25, 0);
	
	\draw[] (6,0) to (3, 1);
	\draw[] (6, 0) to (3, -1);
	\draw[dashed] (6, 0) to (8,0);
	\draw[dotted] (8, 0) to (10, 0);
	
	\draw[blue] (-.25, 0) circle [radius=1.677] ;
	
	\draw[blue] (8, 0) circle [radius=2] ; 
	
	\centerarc[<->](-4,0)(30:330:2) ;
	\centerarc[](-4,0)(0:25:1) ;
	
	\centerarc[<->](6,0)(165:195:2) ;
	\centerarc[](6,0)(0:160:.5) ;
		
	\draw[] (-2.7,.2) node {\(\pi\beta\)} ;
	\draw[] (-1.5, .2) node[scale=.9] {\(r\)} ;
	\draw[] (.25, .8) node[scale=.9] {\(r \sin(\pi\beta)\)} ;
	
	\draw[] (6.6,.55) node[scale=.9] {\(\pi\beta\)} ;
	\draw[] (7, -.25) node {\(r\)} ;
	
	\filldraw[] (-.25, 0) circle[radius=.05, fill=black];
	\draw[] (0, -.2) node {\(p\)} ;
	
	\filldraw[] (8, 0) circle[radius=.05, fill=black];
	\draw[] (8.2, -.3) node {\(p\)} ;

	\end{tikzpicture}
	\caption{Difference between \(\beta<1/2\), on the left, against \(\beta>1/2\), on the right. The dotted line on the left is a geodesic loop, its homotopy class  in \(B(p, \rho)\) is non trivial for \(c_{\beta}r < \rho < r\).}
	\label{fig:balls}
\end{figure}
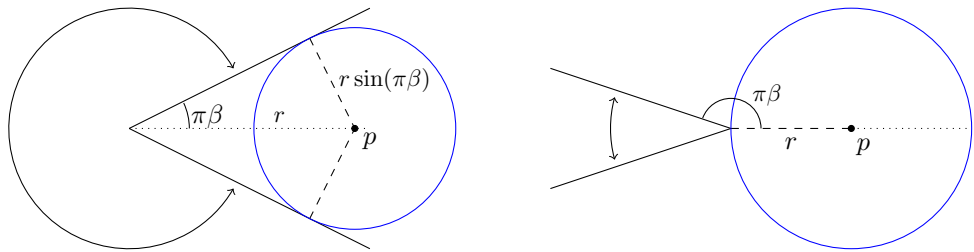

Note that if \(0 < \beta < 1/2\) and \( c_{\beta} r < \rho < r\), then \(B(p, \rho)\) is homeomorphic to an annulus. If \(\epsilon< c_{\beta}\) then these values of \(\rho\), for which \(B(p, \rho)\) is homeomorphic to an annulus, are comprised among the bad scales.

\subsection{Bounded harmonic functions}

In polar coordinates, the Laplacian on the cone \(C(S^1_{2\pi\beta})\) is given by
\[\Delta = \frac{\p^2}{\p r^2} + \frac{1}{r} \frac{\p}{\p r} + \frac{1}{\beta^2 r^2} \frac{\p^2}{ \p\theta^2} .\]
If we change to Cartesian coordinates \(u=r\cos\theta\), \(v=r\sin\theta\), then
\[ g = (\cos^2\theta + \beta^2 \sin^2 \theta) du^2 + (1-\beta^2) \sin\theta\cos\theta \left(dudv + dvdu\right) + (\beta^2\cos^2\theta +  \sin^2 \theta) dv^2 . \]
The metric coefficients \(g_{ij}\) are uniformly bounded but are not continuous at \(0\). By the much more general De Giorgi-Nash-Moser theory, we know that weak harmonic functions are automatically H\"older continuous. This is a non-trivial statement, functions in \(W^{1,2}\) can be unbounded, the prototypical example is \(\log |\log r|\) in a neighborhood of the origin of \(\R^2\).  We provide  a self-contained proof of the H\"older continuity  in the two-dimensional case. 

\begin{lemma}
	Let \(u \in W^{1, 2}(B_1)\) satisfy \(\Delta u =0\), then \(u\) is H\"older continuous.
\end{lemma}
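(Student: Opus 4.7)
The plan is to combine Euclidean elliptic regularity on the regular part of $B_1$ with a separation of variables expansion at the vertex. Since the cone metric is locally flat on $B_1 \setminus \{0\}$ (a neighbourhood of any regular point is isometric, after a cut, to a Euclidean disc), standard interior regularity for the flat Laplacian gives $u \in C^\infty(B_1 \setminus \{0\})$. In particular, $u$ is smooth (hence locally Lipschitz) away from the origin, so the only remaining issue is continuity, and in fact H\"older continuity, at the vertex.

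For this I would establish the expansion
\[
    u(r, \theta) \;=\; A_0 \;+\; \sum_{k \neq 0} A_k \, r^{|k|/\beta} e^{ik\theta} .
\]
Testing the weak equation $\int \langle \nabla u, \nabla \psi \rangle = 0$ against functions of the form $\psi(r, \theta) = \varphi(r) e^{-ik\theta}$ with $\varphi \in C^\infty_\mathrm{c}((0,1))$ (separating real and imaginary parts) shows that the angular Fourier coefficient $c_k(r) = \frac{1}{2\pi} \int_0^{2\pi} u(r, \theta) e^{-ik\theta} d\theta$ satisfies, in the distributional sense on $(0, 1)$, the Euler-type ODE
\[
    c_k'' + \frac{1}{r} c_k' - \frac{k^2}{\beta^2 r^2} c_k = 0 .
\]
Its independent fundamental solutions are $r^{\pm |k|/\beta}$ for $k \neq 0$ and $\{1, \log r\}$ for $k = 0$. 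The $W^{1,2}$ bound on $u$ translates, via Parseval circle-by-circle, into $\int_0^{1} \left( r |c_k'(r)|^2 + \frac{k^2}{\beta^2 r} |c_k(r)|^2 \right) dr < \infty$ for each $k$, and a short check shows this finite-energy condition excludes the $\log r$ and $r^{-|k|/\beta}$ branches. Hence $c_k(r) = A_k r^{|k|/\beta}$ for some $A_k \in \C$.

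Next I would estimate the tail. By the smoothness of $u$ on the regular part, $u(1/2, \cdot)$ is a smooth function on $S^1$, so $c_k(1/2) = A_k (1/2)^{|k|/\beta}$ decays faster than any polynomial in $|k|$, giving $|A_k| \leq C_N |k|^{-N} 2^{|k|/\beta}$ for every $N$. For $r \leq 1/4$ this yields the uniform bound
\[
    \sum_{k \neq 0} |A_k| \, r^{|k|/\beta} \;\leq\; C_N \sum_{k \neq 0} |k|^{-N} (2r)^{|k|/\beta} \;\leq\; C \, r^{1/\beta} ,
\]
so the series converges absolutely and uniformly on $\bar{B}_{1/4}$. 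Setting $u(0) := A_0$, we obtain $|u(r, \theta) - u(0)| \leq C r^{1/\beta}$; since $0 < \beta < 1$ and so $1/\beta > 1$, this is strictly stronger than a H\"older estimate at the vertex for any exponent $\alpha \leq 1$. Combined with the interior smoothness of the first paragraph, a routine stitching argument yields H\"older continuity on $\bar B_{1/2}$.

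The step I expect to be the main obstacle is the rigorous derivation of the ODE for $c_k$ directly from the \emph{weak} formulation of $\Delta u = 0$, together with the energy-based exclusion of the growing fundamental solutions. This requires a careful application of Fubini and Parseval inside the weak formulation tested against products $\varphi(r) e^{-ik\theta}$, and a verification that the $W^{1,2}$ control on $u$ implies the corresponding weighted Dirichlet control on each $c_k$. Once this one-dimensional reduction is justified, the remainder is essentially explicit from the form of the fundamental solutions.
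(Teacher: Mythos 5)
Your argument is correct, but it takes a genuinely different route from the paper. The paper's proof is a soft energy argument: the Caccioppoli inequality with the hole-filling trick gives the geometric decay \(\int_{B_{2^{-k}}}|\nabla u|^2 \leq \theta^k \int_{B_1}|\nabla u|^2\) with \(\theta = C/(C+1) < 1\), and then Poincar\'e plus the Campanato characterization yields H\"older continuity; as the paper notes, this decay argument applies verbatim to all the spaces \(\R^m_{(\beta)}\) (only the final Campanato step is special to \(m=2\)), and it never uses the explicit structure of the cone. Your proof is the hard, explicit alternative: smoothness on the punctured disc plus separation of variables identifies \(u\) with its Fourier expansion \(\sum_k A_k r^{|k|/\beta} e^{ik\theta}\), and the rapid decay of the coefficients of \(u(1/2,\cdot)\) gives the sharp modulus \(|u - u(0)| \leq C r^{1/\beta}\) at the vertex --- in fact Lipschitz, since \(1/\beta > 1\), which is stronger than what the paper's lemma asserts and recovers directly the expansion \eqref{taylor} that the paper only records afterwards (there deduced from continuity via removability of singularities in the coordinate \(z = \beta^{-1}r^{1/\beta}e^{i\theta}\)). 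The trade-off is that your method is confined to the two-dimensional cone, whereas the energy method is dimension-free. One remark: the obstacle you flag --- deriving the ODE for \(c_k\) from the weak formulation --- largely dissolves once your first paragraph is in place, since \(u\) is classically harmonic on \(B_1\setminus\{0\}\) and the Euler equation for \(c_k\) follows by differentiating under the integral sign on each circle; the \(W^{1,2}\) hypothesis is only needed to exclude the branches \(r^{-|k|/\beta}\) and \(\log r\), which is the clean weighted-integral computation you describe. The stitching step at the end does require a word: the scaled interior gradient estimate on \(B(x, c_\beta |x|/2)\) applied to \(u - A_0\) gives \(|\nabla u|(x) \leq C|x|^{1/\beta - 1}\), which is bounded, and Lipschitz continuity on \(\overline{B}_{1/2}\) follows.
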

	
\begin{proof}
The following applies, more generally, to the singular metrics \(\R^m_{(\beta)}\) considered in this article.	
We use the test function \(\psi= \eta^2 u\) as in Lemma \ref{Caccioppoli}, with \(\eta =1\) on \(B_{1/2}\) and \(\mbox{supp}(\eta) \subset B_1\). We get
	\begin{equation*}
	\int_{B_1} \eta^2 |\nabla u|^2 \leq 4 \int_{B_1}  |\nabla \eta|^2 u^2 .
	\end{equation*}
	Let \(A = B_1 \setminus B_{1/2}\) and let \(u_A\) be the average of \(u\) over \(A\). Replacing \(u\) by \(u-u_A\) and using the Poincar\'e inequality, we have
	\begin{align*}
	\int_{B_{1/2}} |\nabla u|^2 &\leq C \int_A (u-u_A)^2 \\
	&\leq  C \int_A |\nabla u|^2 \\
	&= C \left(\int_{B_1} |\nabla u|^2 - \int_{B_{1/2}} |\nabla u|^2 \right) .
	\end{align*}
	We get that
	\[  \int_{B_{1/2}} |\nabla u|^2 \leq \frac{C}{C+1}  \int_{B_{1}} |\nabla u|^2  \]
	and we can iterate to obtain
	\[  \int_{B_{2^{-k}}} |\nabla u|^2 \leq \left(\frac{C}{C + 1}\right)^k  \int_{B_{1}} |\nabla u|^2 . \]
	Equivalently, \(\int_{B_r} |\nabla u|^2 = O (r^{\mu})\) as \(r \to 0\). Here, \(\mu>0\) is given by \(2^{-\mu} = (C+1)^{-1} C\). We can use Poincar\'e again,  \(\int_{B_r} (u-u_{B_r})^2 \leq C r^{2} \int_{B_r} |\nabla u|^2 \), therefore
	\[\|u - u_{B_r}\|_{B_r} \leq C r^{(2+\mu - m)/2} .\]
	In particular, this shows that if \(m=2\) then \(u\) is H\"older continuous.
\end{proof}

The eigenfunctions of the Laplace operator \(\beta^{-2}\p^2_{\theta}\) of \(S^1_{2\pi\beta}\) are \(\cos (k\theta)\), \(\sin(k\theta)\), with eigenvalues \(k^2\beta^2\) for \(k\geq 0\). The corresponding homogeneous harmonic functions are
\[r^{k/\beta}\cos(k \theta), \hspace{2mm} r^{k/\beta}\sin(k\theta).\]
Every harmonic function \(u\) on \(B_1\) admits an expansion
\begin{equation} \label{taylor}
	u = \sum_k \left(a_k r^{k/\beta}\cos(k \theta) + b_k r^{k/\beta}\sin(k\theta)\right) .
\end{equation}

We introduce a complex variable, \(z = (1/\beta) r^{1/\beta} e^{i\theta}\); note that the change of coordinates is singular at the origin. In this complex coordinate, we have
\[ dr^2 + \beta^2 r^2 d\theta^2 = |z|^{2\beta-2} |dz|^2 .\]
and \(4\Delta = |z|^{2-2\beta} \p^2/\p z
\p \bar{z}\). Since the weak harmonic function \(u\) is continuous, by the standard removability of singularities, \(u\) is a smooth harmonic function of the complex variable \(z\) and the expansion \eqref{taylor} is the usual Taylor series.

\bibliographystyle{amsalpha}
\bibliography{bibprodcones}

\providecommand{\bysame}{\leavevmode\hbox to3em{\hrulefill}\thinspace}
\providecommand{\MR}{\relax\ifhmode\unskip\space\fi MR }
\providecommand{\MRhref}[2]{%
  \href{http://www.ams.org/mathscinet-getitem?mr=#1}{#2}
}
\providecommand{\href}[2]{#2}
\begin{thebibliography}{BKMR18}

\bibitem[ACM14]{ACMII}
Kazuo Akutagawa, Gilles Carron, and Rafe Mazzeo, \emph{The {Y}amabe problem on
  stratified spaces}, Geom. Funct. Anal. \textbf{24} (2014), no.~4, 1039--1079.

\bibitem[ACM15]{ACM}
\bysame, \emph{H\"{o}lder regularity of solutions for {S}chr\"{o}dinger
  operators on stratified spaces}, J. Funct. Anal. \textbf{269} (2015), no.~3,
  815--840.

\bibitem[BKMR18]{BKMR}
J{\'e}r{\^o}me Bertrand, C~Ketterer, Ilaria Mondello, and T~Richard,
  \emph{Stratified spaces and synthetic {R}icci curvature bounds},
  arXiv:1804.08870 (2018).

\bibitem[Bre13]{Brendle}
Simon Brendle, \emph{Ricci flat {K}\"{a}hler metrics with edge singularities},
  Int. Math. Res. Not. IMRN (2013), no.~24, 5727--5766.

\bibitem[CDS15a]{CDS15a}
X.X. Chen, S.~Donaldson, and S.~Sun, \emph{{K\"ahler-Einstein} metrics on
  {Fano} manifolds, {I}: Approximation of metrics with cone singularities}, J.
  Amer. Math. Soc. \textbf{28} (2015), no.~1, 183--197.

\bibitem[CDS15b]{CDS15b}
\bysame, \emph{{K\"ahler-Einstein} metrics on {Fano} manifolds, {II}: Limits
  with cone andgle less than $2\pi$}, J. Amer. Math. Soc. \textbf{28} (2015),
  no.~1, 199--234.

\bibitem[CDS15c]{CDS15c}
\bysame, \emph{{K\"ahler-Einstein} metrics on {Fano} manifolds, {III}: Limits
  as cone angle approaches $2\pi$ and completion of the main proof}, J. Amer.
  Math. Soc. \textbf{28} (2015), no.~1, 235--278.

\bibitem[Din02]{Ding}
Yu~Ding, \emph{Heat kernels and {G}reen's functions on limit spaces}, Comm.
  Anal. Geom. \textbf{10} (2002), no.~3, 475--514.

\bibitem[Don12]{Donaldson}
Simon~K. Donaldson, \emph{K\"{a}hler metrics with cone singularities along a
  divisor}, Essays in mathematics and its applications, Springer, Heidelberg,
  2012, pp.~49--79.

\bibitem[GS16]{GuoSongI}
Bin Guo and Jian Song, \emph{Schauder estimates for equations with cone
  metrics, {I}}, ar{X}iv: 1612.00075 (2016).

\bibitem[GS18]{GuoSongII}
\bysame, \emph{Schauder estimates for equations with cone metrics, {II}},
  ar{X}iv: 1809.03116 (2018).

\bibitem[GY18]{GuiYin}
Yaoting Gui and Hao Yin, \emph{Schauder estimates on smooth and singular
  spaces}, arXiv: 1803.05081 (2018).

\bibitem[HL11]{HanLin}
Qing Han and Fanghua Lin, \emph{Elliptic partial differential equations},
  {S}econd ed., Courant Lecture Notes in Mathematics, vol.~1, 2011.

\bibitem[Mon17]{Mondello}
Ilaria Mondello, \emph{The local {Y}amabe constant of {E}instein stratified
  spaces}, Ann. Inst. H. Poincar\'{e} Anal. Non Lin\'{e}aire \textbf{34}
  (2017), no.~1, 249--275.

\bibitem[Wan06]{Wang}
Xu-Jia Wang, \emph{Schauder estimates for elliptic and parabolic equations},
  Chinese Ann. Math. Ser. B \textbf{27} (2006), no.~6, 637--642.

\end{thebibliography}

\end{document}